\title[Positive characteristic fields with free operators]{Model theory of fields with free operators in positive characteristic}
\author[\"{O}. BEYARSLAN]{\"{O}zlem Beyarslan$^{\clubsuit}$}
\address{$^{\clubsuit}$Bo\v{g}azi\c{c}i \"{U}niversitesi}
\email{ozlem.beyarslan@boun.edu.tr}
\author[D. M. HOFFMANN]{Daniel Max Hoffmann$^{\diamondsuit}$}
\thanks{$^{\diamondsuit}$SDG. The first author is supported by the Narodowe Centrum Nauki grants no. 2016/21/N/ST1/01465,
and 2015/19/B/ST1/01150.}
\address{$^{\diamondsuit}$Instytut Matematyki\\
Uniwersytet Warszawski\\
Warszawa\\
Poland}
\email{daniel.max.hoffmann@gmail.com}
\urladdr{https://www.researchgate.net/profile/Daniel\_Hoffmann8}
\author[M. Kamensky]{Moshe Kamensky$^{\heartsuit}$}
\thanks{$^{\heartsuit}$This research was supported by the ISRAEL SCIENCE
FOUNDATION (grant No 1382/15)}
\address{$^{\heartsuit}$Department of Mathematics, Ben-Gurion University, Be’er-Sheva,
Israel}
\email{kamenskm@math.bgu.ac.il}
\urladdr{https://www.math.bgu.ac.il/~kamenskm}\author[P. KOWALSKI]{Piotr Kowalski$^{\spadesuit}$}
\thanks{$^{\spadesuit}$ Supported by the Narodowe Centrum Nauki grants no. 2015/19/B/ST1/01150 and 2015/19/B/ST1/01151.}
\address{$^{\spadesuit}$Instytut Matematyczny\\
Uniwersytet Wroc{\l}awski\\
Wroc{\l}aw\\
Poland}
\email{pkowa@math.uni.wroc.pl} \urladdr{http://www.math.uni.wroc.pl/\textasciitilde pkowa/ }
\thanks{2010 \textit{Mathematics Subject Classification} Primary 03C60; Secondary 12H05, 03C45.}
\thanks{\textit{Key words and phrases}. Derivations in positive characteristic, operator, model companion.}
\DeclareMathOperator{\locus}{locus}
\DeclareMathOperator{\acl}{acl} \DeclareMathOperator{\dcl}{dcl} 
 \DeclareMathOperator{\aut}{Aut} \DeclareMathOperator{\id}{id}
 \DeclareMathOperator{\fr}{Fr} 
\DeclareMathOperator{\im}{im}  
\DeclareMathOperator{\ch}{char}  
 \DeclareMathOperator{\alg}{alg}
\DeclareMathOperator{\tp}{tp}
\DeclareMathOperator{\spec}{Spec}\DeclareMathOperator{\rat}{rat}
\DeclareMathOperator{\sep}{sep}
\DeclareMathOperator{\Alg}{Alg}
\DeclareMathOperator{\nil}{Nil}
\DeclareMathOperator{\dcf}{DCF}\DeclareMathOperator{\Mor}{Mor}\DeclareMathOperator{\scf}{SCF}\DeclareMathOperator{\red}{red}
\newtheorem{theorem}{Theorem}[section]
\newtheorem{prop}[theorem]{Proposition}
\newtheorem{lemma}[theorem]{Lemma}
\newtheorem{cor}[theorem]{Corollary}
\theoremstyle{definition}
\newtheorem{definition}[theorem]{Definition}
\newtheorem{example}[theorem]{Example}
\newtheorem{remark}[theorem]{Remark}
\newtheorem{notation}[theorem]{Notation}
\newtheorem{assumption}[theorem]{Assumption}
\begin{document}
\newcommand{\lili}{\underleftarrow{\lim }}
\newcommand{\coco}{\underrightarrow{\lim }}
\newcommand{\twoc}[3]{ {#1} \choose {{#2}|{#3}}}
\newcommand{\thrc}[4]{ {#1} \choose {{#2}|{#3}|{#4}}}
\newcommand{\Zz}{{\mathds{Z}}}
\newcommand{\Ff}{{\mathds{F}}}
\newcommand{\Cc}{{\mathds{C}}}
\newcommand{\Rr}{{\mathds{R}}}
\newcommand{\Nn}{{\mathds{N}}}
\newcommand{\Qq}{{\mathds{Q}}}
\newcommand{\Kk}{{\mathds{K}}}
\newcommand{\Pp}{{\mathds{P}}}
\newcommand{\ddd}{\mathrm{d}}
\newcommand{\Aa}{\mathds{A}}
\newcommand{\dlog}{\mathrm{ld}}
\newcommand{\ga}{\mathbb{G}_{\rm{a}}}
\newcommand{\gm}{\mathbb{G}_{\rm{m}}}
\newcommand{\gaf}{\widehat{\mathbb{G}}_{\rm{a}}}
\newcommand{\gmf}{\widehat{\mathbb{G}}_{\rm{m}}}
\newcommand{\ka}{{\bf k}}
\newcommand{\ot}{\otimes}
\newcommand{\si}{\mbox{$\sigma$}}
\newcommand{\ks}{\mbox{$({\bf k},\sigma)$}}
\newcommand{\kg}{\mbox{${\bf k}[G]$}}
\newcommand{\ksg}{\mbox{$({\bf k}[G],\sigma)$}}
\newcommand{\ksgs}{\mbox{${\bf k}[G,\sigma_G]$}}
\newcommand{\cks}{\mbox{$\mathrm{Mod}_{({A},\sigma_A)}$}}
\newcommand{\ckg}{\mbox{$\mathrm{Mod}_{{\bf k}[G]}$}}
\newcommand{\cksg}{\mbox{$\mathrm{Mod}_{({A}[G],\sigma_A)}$}}
\newcommand{\cksgs}{\mbox{$\mathrm{Mod}_{({A}[G],\sigma_G)}$}}
\newcommand{\crats}{\mbox{$\mathrm{Mod}^{\rat}_{(\mathbf{G},\sigma_{\mathbf{G}})}$}}
\newcommand{\crat}{\mbox{$\mathrm{Mod}^{\rat}_{\mathbf{G}}$}}
\newcommand{\cratinv}{\mbox{$\mathrm{Mod}^{\rat}_{\mathbb{G}}$}}
\newcommand{\ra}{\longrightarrow}
\newcommand{\bdcf}{B-\dcf}
\makeatletter
\providecommand*{\cupdot}{%
  \mathbin{%
    \mathpalette\@cupdot{}%
  }%
}
\newcommand*{\@cupdot}[2]{%
  \ooalign{%
    $\m@th#1\cup$\cr
    \sbox0{$#1\cup$}%
    \dimen@=\ht0 %
    \sbox0{$\m@th#1\cdot$}%
    \advance\dimen@ by -\ht0 %
    \dimen@=.5\dimen@
    \hidewidth\raise\dimen@\box0\hidewidth
  }%
}

\providecommand*{\bigcupdot}{%
  \mathop{%
    \vphantom{\bigcup}%
    \mathpalette\@bigcupdot{}%
  }%
}
\newcommand*{\@bigcupdot}[2]{%
  \ooalign{%
    $\m@th#1\bigcup$\cr
    \sbox0{$#1\bigcup$}%
    \dimen@=\ht0 %
    \advance\dimen@ by -\dp0 %
    \sbox0{\scalebox{2}{$\m@th#1\cdot$}}%
    \advance\dimen@ by -\ht0 %
    \dimen@=.5\dimen@
    \hidewidth\raise\dimen@\box0\hidewidth
  }%
}
\makeatother

\def\Ind#1#2{#1\setbox0=\hbox{$#1x$}\kern\wd0\hbox to 0pt{\hss$#1\mid$\hss}
\lower.9\ht0\hbox to 0pt{\hss$#1\smile$\hss}\kern\wd0}

\def\ind{\mathop{\mathpalette\Ind{}}}

\def\notind#1#2{#1\setbox0=\hbox{$#1x$}\kern\wd0
\hbox to 0pt{\mathchardef\nn=12854\hss$#1\nn$\kern1.4\wd0\hss}
\hbox to 0pt{\hss$#1\mid$\hss}\lower.9\ht0 \hbox to 0pt{\hss$#1\smile$\hss}\kern\wd0}

\def\nind{\mathop{\mathpalette\notind{}}}

\maketitle
\begin{abstract}
We give algebraic conditions about a finite commutative algebra $B$ over a field of positive characteristic, which are equivalent to the companionability of the theory of fields with ``$B$-operators'' (i.e. the operators coming from homomorphisms into tensor products with $B$). We show that, in the most interesting case of a local $B$, these model companions admit quantifier elimination in the ``smallest possible'' language and they are strictly stable. We also describe the forking relation there.
\end{abstract}

\section{Introduction}\label{secintro}
The aim of this paper is to extend the results from \cite{MS2} about model theory of ``free operators'' on fields from the case of characteristic zero to the case of arbitrary characteristics. Throughout the paper we fix a field $\ka$, and all algebras and rings considered in this paper are assumed to be commutative with unit. Let us quickly recall the set-up from \cite{MS2}. For a fixed finite $\ka$-algebra $B$ and a field extension $\ka\subseteq K$, a \emph{$B$-operator} on $K$ (called a \emph{$\mathcal{D}$-ring} (structure on $K$) in \cite{MS2}, where $\mathcal{D}(\ka)=B$) is a $\ka$-algebra homomorphism $K\to K\otimes_{\ka}B$ (for a more precise description, see Def. \ref{bopdef}). For example, a map $\partial:K\to K$ is a $\ka$-derivation if and only if the corresponding map
$$K\ni x\mapsto x+\partial(x)X+\left(X^2\right)\in K[X]/(X^2)=K\otimes_{\ka}\ka[X]/(X^2)$$
is a $B$-operator for $B=\ka[X]/(X^2)$. It is proved in \cite{MS2} that if $\ch(\ka)=0$, then a model companion of the theory of $B$-operators exists and the properties of this model companion are analyzed in \cite{MS2} as well.

However, in the case of the characteristic $p>0$ only a \emph{negative} result is provided in \cite{MS2}, i.e. \cite[Prop. 7.2]{MS2} says that if $B$ contains a nilpotent element $\alpha$ such that $\alpha^p\neq 0$, then the theory of $B$-operators has no model companion. The main result of this paper says that from the aforementioned example one can actually obtain a full \emph{characterisation} of the companionable theories of fields with $B$-operators.
More precisely, Corollary \ref{supermain} says that the theory of fields with $B$-operators has a model companion if and only if the following two conditions are satisfied:
\begin{enumerate}
\item the nilradical of $B$ coincides with the kernel of the Frobenius homomorphisms on $B$;

\item the $\ka$-algebra $B$ is either local or
$$B\cong_{\ka} \ka_1\times \ldots \times \ka_n,$$
where each $\ka\subseteq \ka_i$ is a finite separable field extension.
\end{enumerate}
We denote the model companion above by $\bdcf$. The proof of Corollary \ref{supermain} proceeds similarly as the proof of the corresponding result in \cite{K2}, where some other set-up (still including differential fields) is considered. To see that the methods from \cite{K2} work, we need to show a technical result (Corollary \ref{imm} and Corollary \ref{maint2}), which intuitively says that the fibers of prolongations with respect to $B$-operators have a stratified linear structure (note that for $B=\ka[X]/(X^2)$ these fibers are tangent spaces). We show that in the local case, the obtained theory behaves much as DCF$_p$, the theory of differentially closed fields of characteristic $p$, i.e. it is strictly stable, admits quantifier elimination in the natural language of rings with operators expanded by a function symbol for the ``inverse of Frobenius'', and the underlying field is separably closed. We also show that in the non-local case, the resulting theory can be identified with the theory ACFA$_{p,d}$ (the model companion of the theory of characteristic $p$ fields with $d$ endomorphisms) and that this theory is simple and eliminates imaginaries.

The results of this paper yield the existence of a model companion of the theory of fields with operators in some cases which were not known previously. For example, we get model companions in the following cases:
\begin{itemize}
\item several (not necessarily commuting) derivations in positive characteristic;

\item several (not necessarily commuting) non-iterative Hasse-Schmidt derivations in positive characteristic;

\item several operators combining those from the previous two items.
\end{itemize}

This paper is organized as follows. In Section \ref{secoppro}, we collect the necessary technical results which are needed in the sequel. In Section \ref{secaxioms}, we give geometric axioms (using prolongations) for theories of the shape $\bdcf$, where $B$ is local such that the nilradical of $B$ coincides with the kernel of Frobenius on $B$ (Theorem \ref{mainthm}). We also show that for finite $\ka$-algebras $B$ not satisfying the above conditions, a model companion of the theory of fields with $B$-operators either does not exists or is already known to exist (Theorem \ref{nomc} and Corollary \ref{supermain}). In Section \ref{secstable}, we show that the theories of the form $\bdcf$ discussed above have the same model-theoretic properties as the theory DCF$_{p}$. Towards this end, we show a surprisingly general result about linear independence over constants (Theorem \ref{lidi} and Corollary \ref{lindis}). In Section \ref{secend}, we discuss and speculate on other topics related with model theory of fields with free operators in positive characteristic.

We thank the Nesin Mathematics Village in \c{S}irince (Selc\"{u}k, Izmir, Turkey) for hosting the workshop in July 2016, where the work on this research was initiated; and Rahim Moosa and Thomas Scanlon, for organizing the workshop; and G\"{o}nen\c{c} Onay and David Pierce, who also participated.

We would like also to thank the referee for a careful reading of our paper and many useful suggestions.

\section{$B$-operators and $\partial$-prolongations}\label{secoppro}

Let $\ka$ be a field. Assume that $B$ is a finite $\ka$-algebra of dimension $e$, and we have a $\ka$-algebra map $\pi_B:B\to \ka$. Let $\{b_0,\ldots,b_{e-1}\}$ be a fixed $\ka$-basis of $B$ such that $\pi(b_0)=1$ and $\pi_B(b_i)=0$ for $i>0$. For convenience, we also set $d:=e-1$.
\begin{remark}\label{firstrem}
In the case when $B$ is local, we will always assume $b_0=1$.
\end{remark}

\subsection{Basic definitions}
Let us fix $\ka$-algebras $R,T$.
\begin{definition}\label{bopdef}
Let $\partial=(\partial_0,\ldots,\partial_d)$ where $\partial_0,\ldots,\partial_d:R\to T$.
\begin{enumerate}
\item If $R=T$ and $\partial_0=\id$, then we say that $\partial$ is a \emph{$B$-operator on $R$} if the corresponding map
$$R\ni r\mapsto \partial_0(r)\otimes b_0+\ldots+\partial_d(r)\otimes b_d\in R\otimes_{\ka}B$$
is a $\ka$-algebra homomorphism. We will also denote the map above by the same symbol $\partial$.

\item More generally, if the corresponding map
$$R\ni r\mapsto \partial_0(r)\otimes b_0+\ldots+\partial_d(r)\otimes b_d\in T\otimes_{\ka}B$$
is a $\ka$-algebra homomorphism, then we say that $\partial$ is a \emph{$B$-operator from $R$ to $T$} (of $\partial_0$). Note that if $\partial$ is a $B$-operator from $R$ to $T$, then $\partial_0:R\to T$ is a $\ka$-algebra homomorphism.

\item If $\partial$ is a $B$-operator from $R$ to $T$, then we define the \emph{ring of constants of $\partial$} as:
$$R^{\partial}:=\{r\in R\ |\ \partial(r)=\partial_0(r)\otimes 1_B\},$$
where $T$ is naturally considered as a $\ka$-subalgebra of $T\otimes_{\ka}B$.
\end{enumerate}
\end{definition}
It is easy to see that if $\partial$ is a $B$-operator from $R$ to $T$, then $R^{\partial}$ is subring of $R$; and, if moreover $R$ is a field, then $R^{\partial}$ is a subfield of $R$.
\begin{remark}
This is the same set-up as in \cite{MS2}, just the terminology is slightly different, which we explain below.
\begin{itemize}
\item What we call ``$B$'' here is called ``$\mathcal{D}(\ka)$'' in \cite{MS2}, and $\mathcal{D}(R)$ denotes $R\otimes_{\ka}B$ in \cite{MS2} for any $\ka$-algebra $R$.

\item As mentioned in Section \ref{secintro}, what we call a ``$B$-operator (on $K$)'' here is called a ``$\mathcal{D}$-ring (structure on $K$)'' in \cite{MS2}.
\end{itemize}
\end{remark}
\begin{example}\label{ex} We describe briefly how derivations and endomorphisms fit into this set-up.
\begin{enumerate}
\item Assume that $B=\ka^e$, $\pi$ is the projection on the first coordinate and $b_0,\ldots,b_d$ is the standard basis of $\ka^e$. Then $(\id,\partial_1,\ldots,\partial_{d})$ is a $B$-operator on $R$ if and only if $\partial_1,\ldots,\partial_{d}$ are $\ka$-algebra endomorphisms of $R$.

\item Assume that $B=\ka[X]/(X^2)$ (so $e=2$), $\pi(a+bX+(X^2))=a$ and $b_0=1_B,b_1=X+(X^2)$. Then $(\id,\partial)$ is a $B$-operator on $R$ if and only if $\partial$ is a derivation on $R$ vanishing on $\ka$.

\item If we take the $d$-th Cartesian power of the $\ka$-algebra $B$ from item $(2)$ above over $\ka$ with respect to the map $\pi$ (also from item $(2)$ above), then we get an $e$-dimensional $\ka$-algebra, which we denote $B^{\times_{\ka}d}$. The map from $B^{\times_{\ka}d}$ to $\ka$ is given by the Cartesian power structure, and we choose:
    $$b_0=(1_B,\ldots,1_B),b_1=\left(X+(X^2),0_B,\ldots,0_B\right),\ldots,b_d=\left(0_B,\ldots,0_B,X+(X^2)\right).$$
Then    $(\id,\partial_1,\ldots,\partial_{d})$ is a $B^{\times_{\ka}d}$-operator on $R$ if and only if $\partial_1,\ldots,\partial_d$ are derivations on $R$ vanishing on $\ka$.
\end{enumerate}
\end{example}
For the $\ka$-algebra $B$, we have several ideals which are of interest to us:
\begin{itemize}
\item $\nil(B)$;

\item $\ker(\pi_{B})$;

\item $\ker(\fr_{B})$.
\end{itemize}
We state below an assumption on $B$, which we will make often.
\begin{assumption}\label{ass2}
$\fr_{B}\left(\ker(\pi_{B})\right)=0$.
\end{assumption}
We describe below the meaning of Assumption \ref{ass2}.
\begin{lemma}\label{schcond}
We have the following.
\begin{enumerate}
\item The $\ka$-algebra $B$ is local if and only if
$$\left(\ker(\pi_{B})\right)^e=0.$$

\item Assumption \ref{ass2} is equivalent to saying that $B$ is local and $\nil(B)=\ker(\fr_B)$.
\end{enumerate}
\end{lemma}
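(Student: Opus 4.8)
The plan is to reduce everything to elementary commutative algebra about the finite (hence Artinian) $\ka$-algebra $B$, built on the single observation that $\mathfrak{m}:=\ker(\pi_B)$ is a maximal ideal of $B$ of $\ka$-codimension $1$: since $\pi_B$ is $\ka$-linear with $\pi_B(1_B)=1$ it is onto the field $\ka$, so $B/\mathfrak{m}\cong\ka$. Throughout, $\ch(\ka)=p>0$, so that $\fr_B$ is a ring homomorphism and $\ker(\fr_B)=\{x\in B : x^p=0\}$.

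For part (1), consider first the direction ``$\Leftarrow$'': if $\mathfrak{m}^e=0$ then every element of $\mathfrak{m}$ is nilpotent, so $\mathfrak{m}\subseteq\nil(B)$; since $\nil(B)$ is the intersection of all prime ideals it lies in the maximal ideal $\mathfrak{m}$, hence $\mathfrak{m}=\nil(B)$. Then every maximal ideal of $B$ contains $\nil(B)=\mathfrak{m}$ and therefore equals $\mathfrak{m}$, so $B$ is local. For ``$\Rightarrow$'': if $B$ is local, its unique maximal ideal is $\mathfrak{m}$, and I look at the descending chain $B\supseteq\mathfrak{m}\supseteq\mathfrak{m}^2\supseteq\cdots$. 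The point is that this chain is strictly decreasing until it reaches $0$: if $\mathfrak{m}^{j+1}=\mathfrak{m}\cdot\mathfrak{m}^j=\mathfrak{m}^j$, then Nakayama's lemma (applicable because $\mathfrak{m}$ is the Jacobson radical of the local ring $B$ and $\mathfrak{m}^j$ is a finitely generated $B$-module) forces $\mathfrak{m}^j=0$. As each strict inclusion drops $\ka$-dimension by at least $1$ and $\dim_{\ka}B=e$, an easy induction gives $\dim_{\ka}\mathfrak{m}^j\le e-j$, whence $\mathfrak{m}^e=0$.

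For part (2), I additionally use that in an Artinian local ring the nilradical equals the unique maximal ideal (it is the only prime). If $\fr_B(\mathfrak{m})=0$, then $x^p=0$ for all $x\in\mathfrak{m}$, so $\mathfrak{m}\subseteq\ker(\fr_B)\subseteq\nil(B)\subseteq\mathfrak{m}$, the last inclusion since $\mathfrak{m}$ is prime; hence $\mathfrak{m}=\ker(\fr_B)=\nil(B)$, which (exactly as in part (1)) forces $B$ to be local and simultaneously yields $\nil(B)=\ker(\fr_B)$. Conversely, if $B$ is local with $\nil(B)=\ker(\fr_B)$, then $\nil(B)$ equals the unique maximal ideal $\mathfrak{m}$, so $\ker(\fr_B)=\mathfrak{m}$, i.e.\ $x^p=0$ for every $x\in\mathfrak{m}$, which is precisely $\fr_B(\mathfrak{m})=0$.

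I do not expect a genuine obstacle here; the whole argument is bookkeeping with the nilradical, maximality of $\mathfrak{m}$, and the structure of Artinian local rings. The one step demanding a little care is the dimension bound in the forward direction of (1), where one must invoke Nakayama's lemma to see that the powers $\mathfrak{m}^j$ strictly decrease rather than merely weakly decrease.
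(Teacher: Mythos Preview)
Your proof is correct and follows essentially the same route as the paper's: the forward direction of (1) is identical (Nakayama plus the dimension count), and part (2) is argued the same way via the chain $\mathfrak{m}\subseteq\ker(\fr_B)\subseteq\nil(B)\subseteq\mathfrak{m}$. The only cosmetic difference is in the backward direction of (1): the paper shows directly that every $x\notin\mathfrak{m}$ is a unit (writing $x=a\cdot 1_B+b$ with $a\in\ka^*$ and $b\in\nil(B)$), whereas you argue that every maximal ideal contains $\nil(B)=\mathfrak{m}$ and hence equals it---these are interchangeable one-line observations.
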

\begin{proof}
To show $(1)$, assume first that $B$ is local. Then $\mathfrak{m}:=\ker(\pi_B)$ is its unique maximal ideal. Since $\dim_{\ka}B=e$, there is $i\leqslant e$ such that $\mathfrak{m}^{i+1}=\mathfrak{m}^i$. By Nakayama Lemma, we get that $\mathfrak{m}^i=0$, hence also $\mathfrak{m}^e=0$.
\\
We will show the second implication from $(1)$. If $\ker(\pi_B)^e=0$, then $\ker(\pi_B)\subseteq \nil(B)$. Take $x\in B\setminus \ker(\pi_B)$. Then $x=a1_B+b$ for some $a\in \ka^*$ and $b\in \nil(B)$. Hence $x\in B^*$, so $B$ is local with a unique maximal ideal coinciding with $\ker(\pi_B)$.

To show $(2)$, assume first that $\fr_B(\ker(\pi_B))=0$. Then $\ker(\pi_B)\subseteq \nil(B)$, hence $B$ is local with unique maximal ideal being $\ker(\pi_B)$ as above. In particular, $\nil(B)\subseteq \ker(\pi_B)$, so we also get $\nil(B)=\ker(\fr_B)$.
\\
To show the other implication in $(2)$, we notice again that if $B$ is local, then $\mathfrak{m}:=\ker(\pi_B)$ is its unique maximal ideal. By the item $(1)$, we get $\mathfrak{m}=\nil(B)$. Hence, the assumption $\nil(B)=\ker(\fr_B)$ implies that $\fr_B(\ker(\pi_B))=0$.
\end{proof}
\begin{lemma}\label{vergen}
Suppose $R,S$ are $\ka$-algebras and $\partial$ is a $B$-operator from $R$ to $S$, where $B$ is local such that the map $\partial_0$ (from Definition \ref{bopdef}) is an embedding. Then we have the following.
\begin{enumerate}
\item If $R$ and $S$ are domains, then $\partial$ extends uniquely to a $B$-operator on the fields of fractions.

\item If $R\subseteq S$ is an \'{e}tale extension of rings, then $\partial$ extends uniquely to a $B$-operator on $S$.

\item If $R\subseteq S$ is a formally smooth (or $0$-smooth) extension of rings, then $\partial$ extends to a $B$-operator on $S$.
\end{enumerate}
\end{lemma}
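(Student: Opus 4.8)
The key observation is that a $B$-operator from $R$ to $S$ is, by Definition \ref{bopdef}(2), nothing but a $\ka$-algebra homomorphism $\tilde{\partial}\colon R\to S\otimes_{\ka}B$ lifting $\partial_0\colon R\to S$ along the projection $\id_S\otimes\pi_B\colon S\otimes_{\ka}B\to S$ (here I use that $B$ local forces $b_0=1_B$, cf. Remark \ref{firstrem}, so $\id_S\otimes\pi_B$ sends $\sum_i \partial_i(r)\otimes b_i$ to $\partial_0(r)$). So in each of the three cases I want to produce, uniquely or not, a $\ka$-algebra homomorphism $R\to S\otimes_{\ka}B$ extending the given data, and the three parts become three standard lifting/extension statements for ring homomorphisms. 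The plan is to reduce each part to such a statement, using that $S\otimes_{\ka}B\to S$ is a surjection whose kernel $S\otimes_{\ka}\ker(\pi_B)$ is nilpotent: indeed by Lemma \ref{schcond}(1), $(\ker\pi_B)^e=0$ since $B$ is local, hence $(S\otimes_{\ka}\ker\pi_B)^e=0$.

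For part (1), I would argue directly: given domains $R\subseteq \Frac(R)$ and $S\subseteq \Frac(S)$ (identifying $R$ with its image under $\partial_0$), extend $\tilde\partial\colon R\to S\otimes_{\ka}B$ to $\Frac(R)$. The target $S\otimes_{\ka}B$ is a finite free $S$-module, and its total ring of fractions is $\Frac(S)\otimes_{\ka}B$; since $\tilde\partial$ sends nonzero elements of $R$ to elements whose image in $\Frac(S)$ is nonzero (being $\partial_0$, an embedding) and nonzerodivisors map to nonzerodivisors in $\Frac(S)\otimes_{\ka}B$ (a unit in $\Frac(S)$ times a unit plus a nilpotent, hence a unit), the universal property of localization gives a unique extension $\Frac(R)\to \Frac(S)\otimes_{\ka}B$; one checks it still lifts $\partial_0$, so it is a $B$-operator, and uniqueness is immediate from uniqueness of the localization map. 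For parts (2) and (3), I would invoke the infinitesimal lifting criteria: an étale (resp. formally smooth, $0$-smooth) $\ka$-algebra map $R\to S$ has, for every nilpotent ideal $I$ of a $\ka$-algebra $C$ and every pair of compatible maps $R\to C$ and $S\to C/I$, a unique (resp. some) lift $S\to C$. Apply this with $C=S\otimes_{\ka}B$, $I=S\otimes_{\ka}\ker(\pi_B)$ (nilpotent as noted), $C/I=S$, the map $S\to C/I$ the identity, and $R\to C$ equal to $\tilde\partial$; compatibility with $R\to S$ holds because $\tilde\partial$ lifts $\partial_0$ which is the inclusion $R\hookrightarrow S$. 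The resulting map $S\to S\otimes_{\ka}B$ lifts $\id_S$, hence is a $B$-operator on $S$ extending $\partial$, unique in cases (1) and (2).

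The only mild subtlety — and the step I would be most careful about — is the bookkeeping that the abstract lift is genuinely an \emph{extension} of $\partial$ as a $B$-operator: one must verify that restricting the new map $S\to S\otimes_{\ka}B$ (resp. $\Frac(R)\to\Frac(S)\otimes_{\ka}B$) back to $R$ recovers $\tilde\partial$ and not merely something congruent to it modulo the nilpotents, and that $\partial_0$ of the extension is the obvious inclusion; in the étale case this is forced by the uniqueness clause, while in the formally smooth case it is built into the choice of lifting problem. A second point worth spelling out is that the infinitesimal lifting property as usually stated concerns square-zero (or nilpotent) extensions of the \emph{base-changed} algebra, so one should note $S\otimes_{\ka}B$ is an $S$-algebra and the nilpotent ideal $S\otimes_{\ka}\ker(\pi_B)$ has $S\otimes_{\ka}B$ modulo it equal to $S$ as an $S$-algebra — after that, the definitions of étale/formally smooth apply verbatim. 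No serious obstacle remains; the lemma is essentially a translation of the definition of $B$-operator into the language of lifting ring homomorphisms along a nilpotent thickening.
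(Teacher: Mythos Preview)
Your proof is correct and follows essentially the same route as the paper: both reduce the problem to the infinitesimal lifting criterion for \'etale/formally smooth maps along the nilpotent thickening $S\otimes_{\ka}B\twoheadrightarrow S$, using that $(\ker\pi_B)^e=0$ by Lemma~\ref{schcond}(1). The paper simply cites Matsumura's Theorem~27.2 and remarks that one should replace ``$2$-nilpotent'' by ``$e$-nilpotent'' in the relevant definition, whereas you spell out the lifting diagram explicitly; your treatment of part~(1) via localization (nonzero $r$ maps to a unit plus a nilpotent in $\Frac(S)\otimes_{\ka}B$, hence to a unit) is the standard unwinding of that same idea.
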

\begin{proof}
It follows as in the proof of \cite[Theorem 27.2]{mat}, where in the definition of  \'{e}tality one should replace ``2-nilpotent'' with ``$e$-nilpotent'' (such a change gives an equivalent definition). Similarly, for formally smooth extensions.
\end{proof}
\begin{lemma}\label{frl}
Suppose that Assumption \ref{ass2} is satisfied, i.e. $\fr_B(\ker(\pi_B))=0$. Let $R,T$ be $\ka$-algebras and $\partial:R\to T\otimes_{\ka}B$ be a $B$-operator.
Then for any $r\in R$, we have
$$\partial(r^p)=\partial_0(r^p)\otimes 1_B,$$
i.e. $\partial$ is an $R^p$-algebra map (or ``$\partial$ vanishes on $r^p$''). (Since $B$ is local, we assume that $b_0=1$, see Remark \ref{firstrem}.)
\end{lemma}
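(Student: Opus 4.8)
The statement to prove is Lemma \ref{frl}: under Assumption \ref{ass2} (i.e. $\fr_B(\ker(\pi_B))=0$), any $B$-operator $\partial\colon R\to T\otimes_\ka B$ satisfies $\partial(r^p)=\partial_0(r^p)\otimes 1_B$ for every $r\in R$. The plan is to write out what $\partial(r^p)$ is in terms of the coordinate maps and exploit that raising to the $p$-th power in characteristic $p$ interacts well with tensor products.

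First I would use that $\partial$ is a $\ka$-algebra homomorphism $R\to T\otimes_\ka B$, so in particular it commutes with the $p$-power map: $\partial(r^p)=\partial(r)^p$. Now write $\partial(r)=\sum_{i=0}^{d}\partial_i(r)\otimes b_i$ inside $T\otimes_\ka B$. Since $T\otimes_\ka B$ is a commutative $\ka$-algebra of characteristic $p$, the Frobenius is additive (the Freshman's Dream), so
$$\partial(r)^p=\Big(\sum_{i=0}^{d}\partial_i(r)\otimes b_i\Big)^p=\sum_{i=0}^{d}\partial_i(r)^p\otimes b_i^p.$$
Here I use that $(x\otimes b)^p = x^p\otimes b^p$ in the tensor product. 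The $i=0$ term is $\partial_0(r)^p\otimes b_0^p=\partial_0(r^p)\otimes 1_B$ (recall $b_0=1_B$ in the local case, so $b_0^p=1_B$; also $\partial_0$ is a $\ka$-algebra map so $\partial_0(r)^p=\partial_0(r^p)$).

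The key step is then to kill all the terms with $i\geqslant 1$. For $i\geqslant 1$ we have $\pi_B(b_i)=0$, i.e. $b_i\in\ker(\pi_B)$, so by Assumption \ref{ass2} we get $b_i^p=\fr_B(b_i)=0$ in $B$. Hence $\partial_i(r)^p\otimes b_i^p=\partial_i(r)^p\otimes 0=0$ for every $i\geqslant 1$, and the sum collapses to $\partial_0(r^p)\otimes 1_B$, as desired. I do not expect any real obstacle here: the only thing to be slightly careful about is that $B$ being local is already guaranteed by Assumption \ref{ass2} via Lemma \ref{schcond}(2), which justifies the convention $b_0=1_B$ used in the displayed identity; everything else is a one-line manipulation using additivity of Frobenius over $\ka$ (note $\fr$ on $\ka$ need not be surjective, but additivity holds regardless) and the defining property of the basis $\{b_0,\dots,b_d\}$.
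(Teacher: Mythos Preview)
Your argument is correct and is essentially the same as the paper's own proof: the paper computes $\partial(r^p)=\partial(r)^p=\left(\sum_i\partial_i(r)\otimes b_i\right)^p=\partial_0(r^p)\otimes 1_B$ using additivity of Frobenius, $b_0=1$, and $b_i^p=0$ for $i>0$ (the paper also records an equivalent diagrammatic reformulation, but the computational route you take is exactly the one spelled out there).
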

\begin{proof}
Since $\fr_{T\otimes_{\ka}B}(\ker(\pi_{T\otimes_{\ka}B}))=0$, the following diagram commutes (the top square of this diagram commutes, since $\partial$ is a homomorphism):
\begin{equation*}
 \xymatrix{  R  \ar[d]_{\partial} \ar[rrr]^{\fr_R} &  &  & R  \ar[d]^{\partial} \\
  T\otimes_{\ka}B  \ar[d]_{\pi_T} \ar[rrr]^{\fr_{T\otimes_{\ka}B}}&  &  & T\otimes_{\ka}B \\
T\ar[rrr]^{\fr_T}  &  & &  T \ar[u]^{\iota_T},}
\end{equation*}
which gives the result, since $\pi_T\circ \partial=\partial_0$.
\\
Equivalently, one can see the desired equality in the following way:
$$\partial(r^p)=(\partial(r))^p=\left(\sum_i\partial_i(r)\otimes b_i\right)^p=\partial_0(r^p)\otimes 1_B,$$
where the last equality holds, since $\ch(\ka)=p$, $\partial_0$ is a homomorphism, and $b_i^p=0$ for $i>0$.
\end{proof}
Let $B_{\red}:=B/\nil(B)$ be the associated reduced ring with the induced $\ka$-algebra map $\pi_{\red}:B_{\red}\to \ka$. Note that, since $B$ is $\ka$-algebra, the reduction map $B\to B_{\red}$ has a canonical section $s_B:B_{\red}\to B$, so we can view $B_{\red}$ as a sub-algebra of $B$.  If
$\partial:R\to{}T\otimes_{\ka}B$ is a $B$-operator, then we write $\partial_{\red}:R\to{}T\otimes_{\ka}B_{\red}$ for the corresponding $B_{\red}$-operator, which is the reduction of $\partial$, and we also define the following sub-algebra of $R$:
$$R_r:=\{r\in R\ |\ \partial(r)=\partial_{\red}(r)\}.$$
It is easy to see that in the local case one gets $B_{\red}=\ka$ and $\partial_{\red}=\partial_0$, therefore $R_r=R^{\partial}$.

We show below a partial converse to Lemma~\ref{frl}: a $B_{\red}$-operator can be
lifted to a $B$-operator, as long as it satisfies the necessary condition
of Lemma \ref{frl}.

\begin{lemma}\label{pextend}
Suppose that
\begin{itemize}
  \item $\ka\subseteq K\subseteq M$ is a tower of fields and $T$ is a $\ka$-algebra;
  \item $\partial^K:K\to{}T\otimes_{\ka}B$ is a $B$-operator such that $M^p\subseteq K_r$;
  \item  $\partial^M_r:M\to{}T\otimes_{\ka}B_{\red}$ is a $B_{\red}$-operator such that
  $$\left(\partial^M_r\right)|_K=\left(\partial^K\right)_{\red}.$$
\end{itemize}
Then there exists a lifting $\partial^M:M\to{}T\otimes_{\ka}B$ of $\partial_r^M$ restricting to $\partial^K$.
\end{lemma}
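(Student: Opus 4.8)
The plan is to peel off one purely inseparable degree-$p$ step at a time — using Zorn's lemma to deal with the possibly infinite extension $M/K$ — and to write down each one-step lift explicitly by means of the section $s_B$.

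First I would recast the problem relative to $K$. Using $\partial^K$ to view $T\otimes_{\ka}B$ as a $K$-algebra and $(\partial^K)_{\red}$ to view $T\otimes_{\ka}B_{\red}$ as a $K$-algebra, the hypothesis $(\partial^M_r)|_K=(\partial^K)_{\red}$ says precisely that $\partial^M_r$ is a homomorphism of $K$-algebras; dually, a $B$-operator $\partial^M\colon M\to T\otimes_{\ka}B$ restricting to $\partial^K$ is exactly a homomorphism of $K$-algebras, and ``$\partial^M$ lifts $\partial^M_r$'' means $q_T\circ\partial^M=\partial^M_r$, where $q_B\colon B\to B_{\red}$ is the reduction map and $q_T:=\id_T\otimes q_B$ (so $\ker q_T=T\otimes_{\ka}\nil(B)$ is nilpotent). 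Thus the task becomes to lift a $K$-algebra homomorphism along the $K$-algebra surjection $q_T$. What makes this possible, although $M/K$ is only purely inseparable, is the section $s_T:=\id_T\otimes s_B$ of $q_T$: it is a $\ka$-algebra homomorphism, not $K$-linear in general (its failure to be $K$-linear at an element of $K$ is exactly that element's failure to lie in $K_r$), and the hypothesis $M^p\subseteq K_r$ says $s_T$ \emph{is} $K$-linear on $p$-th powers coming from $M$.

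Then I would run Zorn's lemma on the poset of pairs $(L,\partial^L)$ with $K\subseteq L\subseteq M$ an intermediate field and $\partial^L\colon L\to T\otimes_{\ka}B$ a $\ka$-algebra homomorphism such that $\partial^L|_K=\partial^K$ and $q_T\circ\partial^L=\partial^M_r|_L$, ordered by extension of maps. It is nonempty (it contains $(K,\partial^K)$, using the hypothesis on $\partial^M_r|_K$) and chains have upper bounds (unions), so it has a maximal element $(L,\partial^L)$. Supposing $L\neq M$, pick $a\in M\setminus L$; then $a^p\in M^p\subseteq K_r\subseteq L$ while $a\notin L$, so $X^p-a^p$ is irreducible over $L$ and $L(a)\cong L[X]/(X^p-a^p)$, whence extending $\partial^L$ to $L(a)$ amounts to choosing the image $y$ of $a$ subject only to $y^p=\partial^L(a^p)$. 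I would take $y:=s_T\big(\partial^M_r(a)\big)$; then $y^p=s_T\big(\partial^M_r(a)^p\big)=s_T\big(\partial^M_r(a^p)\big)=s_T\big((\partial^K)_{\red}(a^p)\big)=\partial^K(a^p)=\partial^L(a^p)$, where the second equality uses that $s_T,\partial^M_r$ are ring homomorphisms, the third that $a^p\in K$ and $(\partial^M_r)|_K=(\partial^K)_{\red}$, and the fourth that $a^p\in K_r$ (and the last that $\partial^L|_K=\partial^K$). The resulting $\partial^{L(a)}$ restricts to $\partial^K$, and $q_T\circ\partial^{L(a)}=\partial^M_r|_{L(a)}$ since both sides are $K$-algebra homomorphisms agreeing on $L$ and sending $a$ to $q_T(y)=\partial^M_r(a)$; this contradicts maximality. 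Hence $L=M$ and $\partial^M:=\partial^L$ is the desired lift.

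The entire content is in the computation of $y^p$, which is exactly where the hypotheses $M^p\subseteq K_r$ and $(\partial^M_r)|_K=(\partial^K)_{\red}$ are used together, so I do not anticipate a real obstacle. The only points needing care are that $s_B$ (hence $s_T$) genuinely is a $\ka$-algebra section of the reduction map, which is part of the set-up fixed just before the lemma, and that in the Zorn step one always adjoins an element \emph{outside} $L$, so that $L(a)/L$ really has degree $p$ and $X^p-a^p$ is really irreducible; the rest — that the constructed maps are $\ka$-algebra homomorphisms and that the poset satisfies the hypotheses of Zorn's lemma — is routine.
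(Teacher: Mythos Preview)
Your proposal is correct and follows essentially the same approach as the paper: both reduce to a single degree-$p$ step $L(a)/L$ with $a^p\in K_r$, and both define the lift on the new generator by $\partial^M(a):=(\id_T\otimes s_B)\big(\partial^M_r(a)\big)$, the key computation being $y^p=\partial^K(a^p)$. The only difference is that the paper handles the passage from one step to the general case by a casual ``by induction'', whereas you make this rigorous via Zorn's lemma (needed since $M/K$ may be infinite) and spell out the verification that the resulting map is indeed a lift of $\partial^M_r$; your write-up is thus a more careful version of the same argument.
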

\begin{proof}
  Since $M^p\subseteq{}K$, $M$ is a purely inseparable extension of
  $K$, and we may assume by induction that it is of the form
  $M=K(t^{1/p})$, where $\partial^K(t)=(\partial^K)_{\red}(t)$ (since $t\in R_r$). Since we have:
  $$\partial^M_r(t^{1/p})^p=\partial^M_r(t)=(\partial^K)_{\red}(t)=\partial^K(t),$$
we may set
$$\partial^M(t^{1/p}):=(\id_T\otimes s_B)\left(\partial^M_r(t^{1/p})\right),\ \ \ \ \ \ \ \ \ \partial^M|_K:=\partial^K;$$
and such $\partial^M$ is well-defined and is a lifting of $\partial_r^M$ restricting to $\partial^K$.
\end{proof}

\begin{remark}
  The same proof works when $B_{\red}$ is replaced by any quotient of $B$,
  as long as it has a section, so that the analogue of $R_r$ can be defined.
  For instance, after replacing $B_{\red}$ with $\ka$, the role of $R_r$ is played by the ring of constants $R^\partial$ (see Definition \ref{bopdef}(3)).
\end{remark}
Let us fix a field extension $\ka\subseteq K$ and a $B$-operator $\partial:K\to B\otimes_{\ka}K$ on $K$. We tacitly assume that all the fields considered are subfields of a (big) algebraically closed field $\Omega$.

For an affine $K$-scheme $V$, we want to define its prolongation $\tau^{\partial}(V)$. The defining property of $\tau^{\partial}(V)$ is that for any $K$-algebra $R$, we should have a natural bijection between the following sets of rational points over $K$:
$$\tau^{\partial}(V)(R)\ \ \longleftrightarrow\ \  V\left(B\otimes_{\ka}R\right),$$
where $B\otimes_{\ka}R$ has the $K$-algebra structure given by the composition of ${\partial}$ with the map $B\otimes_{\ka}K\to B\otimes_{\ka}R$. Since we are interested only in affine varieties, we are in fact looking for the \emph{left-adjoint} functor to the following functor:
$$B^{\partial}(R):\Alg_K \to \Alg_K,\ \ \ B^{\partial}(R):=B\otimes_{\ka}R,$$
where the $K$-algebra structure on $B^{\partial}(R)$ is described above. It is easy to see that for any $K$-algebra map $f:R\to R'$ the induced map $B(f):B^{\partial}(R)\to B^{\partial}(R')$ is also a $K$-algebra map, so we get a functor indeed. We are looking for a left-adjoint functor to the functor $B^{\partial}$, i.e. a functor
$$\tau^{\partial}:\Alg_K \to \Alg_K$$
such that for any $K$-algebras $R,S$, there is a natural bijection:
$$\Mor_{\Alg_K}\left(R,B^{\partial}(S)\right)\longleftrightarrow \Mor_{\Alg_K}\left(\tau^{\partial}(R),S\right).$$
This functor is described at the end of Section 3 of \cite{MS2}.
\begin{remark}\label{empty}
We have to accept that the $0$-ring is a $K$-algebra, since it may appear as $\tau^{\partial}(R)$ for some $R$. For example, if $\partial$ is a derivation and $a\in K\setminus K^{\partial}$, then we have
$$\tau^{\partial}\left(K(a^{1/p})\right)=\{0\}.$$
\end{remark}
We describe now several natural maps. Consider the adjointness bijection:
$$\Mor_{\Alg_K}\left(\tau^{\partial}(R),\tau^{\partial}(R)\right)\to \Mor_{\Alg_K}\left(R,B^{\partial}(\tau^{\partial}(R))\right).$$
\begin{remark}\label{natrem}
\begin{enumerate}
\item The image of the identity map by the bijection above is a natural $B$-operator extending $\partial$
$$\partial_R:R\to \tau^{\partial}(R)\otimes_{\ka}B.$$

\item We define the map $\pi_{\partial}^R:R\to \tau^{\partial}(R)$ as the composition of the following maps:
\begin{equation*}
 \xymatrix{R\ar[rr]^{\partial_R\ \ \ \ \ \ \ } &  & B^{\partial}(\tau^{\partial}(R)) \ar[rr]^{\pi_{\tau^{\partial}(R)}} &  & \tau^{\partial}(R).}
\end{equation*}
\end{enumerate}
\end{remark}
If $V=\spec(R)$ is an affine $K$-scheme, then we define its \emph{$\partial$-prolongation} as
$$\tau^{\partial}(V):=\spec\left(\tau^{\partial}(R)\right),$$
which is also a $K$-scheme. By Remark \ref{empty}, it may happen that $\tau^{\partial}(V)$ is the ``empty scheme'' (for a non-empty $V$). The map $\pi_{\partial}^R$ gives us the following natural morphism:
$$\pi_{\partial}^V:\tau^{\partial}(V)\to V.$$
For any $K$-algebra $R$ and any $a\in V(R)$, we denote by $\tau^{\partial}_a(V)$ the scheme over $R$, which is the fiber of the morphism $\pi_{\partial}^V$ over $a$.
\begin{remark}\label{partialv}
There is a natural (in $V$) map (not a morphism!):
$$\partial_V:V(K)\to \tau^{\partial}V(K)=V\left(B^{\partial}(K)\right)$$
given by $V(\partial)$, where we consider $V$ as a functor (of rational points) from the category of $K$-algebras to the category of sets and we apply this functor to the $K$-algebra homomorphism $\partial:K\to B^{\partial}(K)$.
\end{remark}
\noindent
By Remark \ref{natrem}(1), we immediately get the following.
\begin{lemma}\label{easylemma}
Suppose $V$ and $W$ are $K$-varieties and $W\subseteq \tau^{\partial}(V)$. Then we get a natural $B$-operator
$$\partial^W_V:K[V]\to K[W]\otimes_{\ka}B,$$
which  extends $\partial$ and is obtained as the composition of $\partial_{K[V]}$ with the $K$-algebra morphism $K[\tau^{\partial}(V)]\to K[W]$ induced by the inclusion morphism $W\to \tau^{\partial}(V)$.
\end{lemma}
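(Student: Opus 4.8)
The plan is to unwind the adjunction of Remark~\ref{natrem}(1) together with the functoriality of $-\otimes_{\ka}B$; essentially no work is needed beyond bookkeeping. First I would write $V=\spec(R)$ with $R=K[V]$, so that $K[\tau^{\partial}(V)]=\tau^{\partial}(R)$, and invoke Remark~\ref{natrem}(1) to obtain the natural $B$-operator
$$\partial_R\colon R\longrightarrow \tau^{\partial}(R)\otimes_{\ka}B=K[\tau^{\partial}(V)]\otimes_{\ka}B,$$
a $\ka$-algebra homomorphism whose restriction to $K\subseteq R$ equals $\partial$ followed by the structure map (tensored with $B$). Next I would note that the inclusion morphism $\iota\colon W\to\tau^{\partial}(V)$ of $K$-varieties induces, by restriction of regular functions, a homomorphism of $K$-algebras $\iota^{*}\colon K[\tau^{\partial}(V)]\to K[W]$, hence a homomorphism of $\ka$-algebras $\iota^{*}\otimes\id_{B}\colon K[\tau^{\partial}(V)]\otimes_{\ka}B\to K[W]\otimes_{\ka}B$.

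Then I would set $\partial^{W}_{V}:=(\iota^{*}\otimes\id_{B})\circ\partial_R$. Being a composition of $\ka$-algebra homomorphisms, it is a $\ka$-algebra homomorphism $K[V]\to K[W]\otimes_{\ka}B$; writing its target out in the fixed basis $b_0,\dots,b_d$ of $B$ exhibits it as a tuple $(\partial^{W}_{V,0},\dots,\partial^{W}_{V,d})$, so it is a $B$-operator from $K[V]$ to $K[W]$ in the sense of Definition~\ref{bopdef}(2). Finally, since $\iota^{*}$ is a morphism of $K$-algebras it restricts to the identity on $K$; combining this with the fact (from Remark~\ref{natrem}(1)) that $\partial_R$ extends $\partial$, I would conclude that $\partial^{W}_{V}$ restricted to $K$ equals $\partial$ followed by the structure embedding $K\hookrightarrow K[W]$, i.e.\ $\partial^{W}_{V}$ extends $\partial$. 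Naturality in $V$ would be inherited from the naturality of $R\mapsto\partial_R$ recorded in Remark~\ref{natrem}(1) together with the functoriality of $-\otimes_{\ka}B$.

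I do not expect a genuine obstacle here: the statement is a formal consequence of the adjoint description of $\tau^{\partial}$ once one observes, via Definition~\ref{bopdef}(2), that a $B$-operator from $K[V]$ to $K[W]$ is \emph{nothing but} a $\ka$-algebra homomorphism $K[V]\to K[W]\otimes_{\ka}B$. The only point that wants a moment's care is checking that $\partial^{W}_{V}$ genuinely extends the given $\partial$ on $K$, which, as indicated above, reduces to the corresponding property of $\partial_R$ already established in Remark~\ref{natrem}(1).
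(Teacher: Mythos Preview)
Your proposal is correct and is precisely the argument the paper has in mind: the paper's own proof is simply the one-line assertion ``By Remark~\ref{natrem}(1), we immediately get the following,'' and your write-up is the natural unpacking of that sentence, composing $\partial_{K[V]}$ with $\iota^{*}\otimes\id_B$ and checking that the result extends $\partial$.
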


\subsection{Rational points of $\partial$-prolongations}
We state here our main technical result. It may look rather technical indeed, but the reader should have in mind the immediate application, which is Corollary \ref{imm}. This corollary in the case of derivations reduces to the fact that tangent spaces are vector spaces, so if they are defined over a field $L$, they must have $L$-rational points. In general, Corollary \ref{imm} may be understood as saying that if $B$ satisfies Assumption \ref{ass2}, then the fibers of the prolongations with respect to a $B$-operator posses a stratified linear structure (and have $L$-rational points as above).
\begin{prop}\label{maint}
Suppose that $\fr_B(\ker(\pi_B))=0$ and:
\begin{itemize}
\item $\ka\subseteq K\subseteq L$ and $\ka\subseteq K\subseteq M$ are towers of fields;

\item $\partial:K\to L\otimes_{\ka}B$ is a $B$-operator;

\item $c:M\to \Omega\otimes_{\ka}B$ is a $B$-operator and $b:M\to L$ is a $K$-algebra map such that the following diagram is commutative:
\begin{equation*}
 \xymatrix{  K  \ar[d]_{\subseteq} \ar[rr]^{\partial}  &  &  L\otimes_{\ka}B  \ar[d]^{\subseteq} \\
M  \ar[d]_{b} \ar[rr]^{c}  &  &  \Omega \otimes_{\ka}B  \ar[d]^{\id_{\Omega}\otimes \pi}\\
L \ar[rr]^{\subseteq} &  &  \Omega.}
\end{equation*}
\end{itemize}
Then there is $B$-operator $c':M\to L\otimes_{\ka}B$ such that the following diagram is commutative:
\begin{equation*}
 \xymatrix{  K  \ar[d]_{\subseteq} \ar[rr]^{\partial}  &  &  L\otimes_{\ka}B \ar[d]^{=}\\
M  \ar[d]_{b}  \ar[rr]^{c'}  &  &  L\otimes_{\ka}B  \ar[d]^{\id_{L}\otimes \pi}\\
L \ar[rr]^{=} &  &  L.}
\end{equation*}
\end{prop}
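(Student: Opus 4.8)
The plan is to reduce to the case where $b$ is an inclusion and then extend $\partial$ from $K$ to $M$ one step at a time, the role of $c$ being to make the purely inseparable steps legitimate.

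\smallskip
\noindent\emph{Reduction.} Since $b\colon M\to L$ is a ring homomorphism between fields it is injective; replacing $M$ by $b(M)\subseteq L$ and $c$ by the $B$-operator it induces on $b(M)$, and observing that a $B$-operator on $b(M)$ as in the conclusion gives one on $M$ after precomposition with $b$, we may assume $K\subseteq M\subseteq L\subseteq\Omega$ with $b$ the inclusion. By Lemma~\ref{schcond}(2), $B$ is local, so $b_0=1$, and the task is to extend the $B$-operator $\partial\colon K\to L\otimes_\ka B$, whose zeroth component is the inclusion $K\hookrightarrow L$, to a $B$-operator $c'\colon M\to L\otimes_\ka B$ whose zeroth component is the inclusion $M\hookrightarrow L$.

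\smallskip
\noindent\emph{The induction.} I would argue by Zorn's lemma on the set of pairs $(F,\partial^F)$, where $K\subseteq F\subseteq M$ and $\partial^F\colon F\to L\otimes_\ka B$ is a $B$-operator extending $\partial$ with zeroth component the inclusion $F\hookrightarrow L$, which in addition admit a $B$-operator $c^F\colon M\to\Omega\otimes_\ka B$ extending $\partial^F$ (through $L\otimes_\ka B\hookrightarrow\Omega\otimes_\ka B$) with zeroth component the inclusion $M\hookrightarrow\Omega$; the base point is $(K,\partial)$, witnessed by $c$, and at increasing unions such a witness over $\Omega$ persists since $\Omega$ is taken large enough (the pertinent families of conditions form a decreasing chain of nonempty type-definable sets). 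Let $(F,\partial^F)$ be maximal; I claim $F=M$. Otherwise pick $a\in M\setminus F$. If $a$ is transcendental over $F$, put $\partial^{F(a)}(a):=a\otimes1$: this defines a $B$-operator on $F(a)$ with the right zeroth component, because the image of any nonzero element of $F[a]$ has nonzero residue (its value at $a$) and is hence a unit of $L\otimes_\ka B$. If $a$ is separable algebraic over $F$ with minimal polynomial $g\in F[X]$, then applying $\partial^F$ to the coefficients of $g$ yields $g^{\partial^F}\in(L\otimes_\ka B)[X]$, whose reduction modulo the nilpotent maximal ideal $L\otimes_\ka\mathfrak m$ of $L\otimes_\ka B$ is the separable polynomial $g\in L[X]$ with simple root $a\in L$; Hensel's lemma lifts $a$ to a root $\alpha\in L\otimes_\ka B$ of $g^{\partial^F}$, and $\partial^{F(a)}(a):=\alpha$ extends $\partial^F$ to $F(a)=F[X]/(g)$. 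Finally, if $a$ is inseparable over $F$, let $q=p^n$ be maximal with the minimal polynomial of $a$ in $F[X^q]$; then $a^q$ is separable algebraic over $F$ (its minimal polynomial, the $G$ with $g(X)=G(X^q)$, is irreducible and has nonzero derivative by maximality of $q$), so by the previous case and maximality $a^q\in F$, hence $a$ is purely inseparable over $F$ and we may assume $a^p\in F$ (so $[F(a):F]=p$). Here $c$ enters: the witness $c^F$ satisfies $c^F(a^p)=(c^F)_0(a^p)\otimes1=a^p\otimes1$ by Lemma~\ref{frl} (as $a\in M$), and it restricts to $\partial^F$ on $F$, so $\partial^F(a^p)=a^p\otimes1$, i.e.\ $a^p\in F^{\partial^F}$. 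Since also $F(a)^p=F^p(a^p)$ with $F^p\subseteq F^{\partial^F}$ by Lemma~\ref{frl}, Lemma~\ref{pextend} (in the local case, so $B_{\red}=\ka$ and $R_r=R^\partial$, with $\partial^{F(a)}_r$ the inclusion $F(a)\hookrightarrow L$) produces an extension $\partial^{F(a)}\colon F(a)\to L\otimes_\ka B$ of $\partial^F$ with zeroth component the inclusion (its construction in fact sends $a$ to $a\otimes1$). In each case one also re-chooses a witness over $\Omega$ for $(F(a),\partial^{F(a)})$, contradicting maximality; hence $F=M$ and $c':=\partial^F$ works.

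\smallskip
\noindent\emph{Main obstacle.} The delicate point is carrying the $\Omega$-witness through the transcendental and separable-algebraic steps, where the values are chosen freely: to run a purely inseparable step one must know that $a^p$ — which need not be a $p$-th power of an element of the \emph{current} field $F$ — already lies in the constants of $\partial^F$, and this is exactly what the existence of $c$ supplies, but only through such a witness. The re-choosing is possible because $\Omega$ is large: using Assumption~\ref{ass2} (which forces $\xi^p=0$ for every $\xi$ in the maximal ideal of $\Omega\otimes_\ka B$), the admissible values for each newly adjoined generator form an affine space over $\Omega$ and in particular are available already over $L\subseteq\Omega$; making this precise — equivalently, the compactness argument needed at the increasing unions — is where the real work lies.
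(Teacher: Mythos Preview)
Your argument has a genuine gap at the ``re-choosing'' step, and the fix you sketch does not work. The obstruction to extending $\partial^{F(a)}$ to a witness $c^{F(a)}\colon M\to\Omega\otimes_\ka B$ is algebraic, not a matter of the size of $\Omega$: since by Assumption~\ref{ass2} every $p$-th power in $\Omega\otimes_\ka B$ lies in $\Omega\otimes 1$, any such witness forces every element of $M^p$ to lie in $F(a)^{\partial^{F(a)}}$. Your prescription $\partial^{F(a)}(a):=a\otimes 1$ at a transcendental generator can violate this. Take $B=\ka[\varepsilon]/(\varepsilon^2)$, $K=F=\ka(s)$ with $\partial_1(s)=1$, and $M=K\bigl(u,(su)^{1/p}\bigr)$ with $u$ transcendental over $K$. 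A witness $c^F$ exists (necessarily $c^F_1(u)=-u/s$, forced by $c^F_1(su)=0$), so $(F,\partial)$ lies in your poset. But setting $\partial^{F(u)}_1(u):=0$ gives $\partial^{F(u)}_1(su)=u\ne 0$; then $\partial^{F(u)}(su)=su\otimes 1+u\otimes\varepsilon$ is not a $p$-th power in $\Omega\otimes_\ka B$ for \emph{any} $\Omega$, so no $c^{F(u)}$ exists and your Zorn argument stalls at $F(u)$. The ``affine space over $\Omega$'' picture describes the fibre of possible values at $a$ itself, but says nothing about whether the resulting operator on $F(a)$ still extends to all of $M$.

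The paper's organisation is different: rather than Zorn, it passes to the separable closure $M_0$ of $K$ in $M$ in one stroke via formal smoothness (Lemma~\ref{vergen}(3)) and then treats the purely inseparable remainder with Lemma~\ref{pextend}, using $c$ only to check that the relevant $t$ lies in the constants. The same care is in fact needed there --- after replacing $K$ by $M_0$, one must know that the chosen extension still agrees with $c$ on the elements of $M^p\cap M_0$ --- and a clean way to close either argument is this: by Lemma~\ref{frl} one has $c|_{M^p}=b|_{M^p}\otimes 1$, so $c$ restricted to the compositum $K\cdot M^p$ already has image in $L\otimes_\ka B$ (it is generated there by $c|_K=\partial$ and $c|_{M^p}$); since then $M^p\subseteq (K\cdot M^p)^{c}$, a single application of Lemma~\ref{pextend} to $K\cdot M^p\subseteq M$ produces $c'$ directly, with no induction and no witness bookkeeping.
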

\begin{proof}
Our aim is to expand the map $b$ to a $B$-operator
$$c':M\to L\otimes_{\ka}B$$
extending $\partial$ on $K$. There is a subfield $M_0\subseteq M$ such that the field extension $K\subseteq M_0$ is separable and the extension $M_0\subseteq M$ is purely inseparable. By Lemma \ref{vergen}(3) (since a separable field extension is formally smooth, see \cite[Theorem 26.9]{mat}), without loss of generality we can assume that $K=M_0$. Then, by an easy induction, we can assume that $M=K(t^{1/p})$ for some $t\in K$. By Lemma \ref{frl}, we get that for each $i>0$ we have $c_i(t)=0$. Therefore $t\in K^{\partial}=K_r$ (in the notation introduced before Lemma \ref{pextend}), so $M^p\subseteq K_r$. Applying Lemma \ref{pextend} for $\partial^K:=\partial$, $\partial^M_r:=b$ and $T:=L$, we get (since $B_{\red}=\ka$ in our case here) the required $B$-operator $c'$.
\end{proof}
From now on, by a \emph{$K$-variety} (or a \emph{variety over $K$}) we mean a $K$-irreducible and $K$-reduced affine scheme over $K$ of finite type. Hence, a  $K$-variety for us is basically the same as a prime ideal in a polynomial ring over $K$ in finitely many variables.
\begin{cor}\label{imm}
Suppose that $\fr_B(\ker(\pi_B))=0$ and:
\begin{itemize}
\item $\ka\subseteq K\subseteq L$ is a tower of fields;

\item $\partial$ is a $B$-operator on $K$;

\item $W$ is a variety over $K$;

\item there is $c\in \tau^{\partial}W(\Omega)$ such that
$$b:=\pi_{\partial}(c)\in W(L)$$
is a generic point of $W$ over $K$.
\end{itemize}
Then $\tau^{\partial}_bW(L)\neq \emptyset$.
\end{cor}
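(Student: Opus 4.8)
The plan is to translate the statement through the adjunction defining $\tau^{\partial}$, apply Proposition~\ref{maint}, and translate back. Recall that for any $K$-algebra $R$ one has $\tau^{\partial}W(R)=\Mor_{\Alg_K}\bigl(\tau^{\partial}(K[W]),R\bigr)\cong\Mor_{\Alg_K}\bigl(K[W],B^{\partial}(R)\bigr)$, so the given point $c\in\tau^{\partial}W(\Omega)$ corresponds to a $K$-algebra homomorphism $\bar c\colon K[W]\to B^{\partial}(\Omega)=\Omega\otimes_{\ka}B$, where the $K$-algebra structure on the target is the one induced by $\partial$. Under this correspondence the point $b=\pi_{\partial}(c)\in W(\Omega)$ is the one attached to $(\id_{\Omega}\otimes\pi_B)\circ\bar c\colon K[W]\to\Omega$ (this is Remark~\ref{natrem}(2) read on the coordinate-ring side); by hypothesis this map lands in $L$, and since $b$ is generic over $K$ it is injective. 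Hence it identifies $K[W]$ with a subring of $L$, and I set $M:=K(W)$, viewed via this identification as a subfield of $L$, hence of $\Omega$.

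Since $\fr_B(\ker\pi_B)=0$, Lemma~\ref{schcond}(2) gives that $B$ is local. The $\partial_0$-component of $\bar c$, namely $(\id_{\Omega}\otimes\pi_B)\circ\bar c$, is an embedding of the domain $K[W]$ into the domain $\Omega$, so by Lemma~\ref{vergen}(1) the operator $\bar c$ extends uniquely to a $B$-operator $c^M\colon M\to\Omega\otimes_{\ka}B$, which still extends $\partial$ and whose $\pi_B$-part is the inclusion $M\hookrightarrow L\hookrightarrow\Omega$. Now $\ka\subseteq K\subseteq L$, $\ka\subseteq K\subseteq M$, the operator $\partial$ (regarded as a $B$-operator $K\to L\otimes_{\ka}B$), $c^M$, and the inclusion $b\colon M\hookrightarrow L$ satisfy exactly the hypotheses of Proposition~\ref{maint}: the square in that proposition commutes precisely because $c^M$ extends $\partial$ and projects to $b$. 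Proposition~\ref{maint} therefore yields a $B$-operator $c'\colon M\to L\otimes_{\ka}B$ extending $\partial$ with $(\id_L\otimes\pi_B)\circ c'$ equal to the inclusion $M\hookrightarrow L$.

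To finish, restrict $c'$ to $K[W]$. This is a $K$-algebra homomorphism $K[W]\to B^{\partial}(L)$, so through the adjunction it is an $L$-point $c''\in\tau^{\partial}W(L)$. By Remark~\ref{natrem}(2) again, together with naturality of the maps $\pi$, the point $\pi_{\partial}(c'')\in W(L)$ is the one attached to $(\id_L\otimes\pi_B)\circ(c'|_{K[W]})\colon K[W]\to L$, which by the previous paragraph is the inclusion $K[W]\hookrightarrow L$ — that is, it is $b$. So $c''$ lies in the fiber $\tau^{\partial}_bW(L)$, which is therefore non-empty.

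The genuine mathematical input is entirely in Proposition~\ref{maint} (ultimately in the lifting Lemma~\ref{pextend}, which handles the purely inseparable part); everything else is bookkeeping. Accordingly, the step I expect to require the most care is purely organizational: keeping straight the adjunction between $R$-points of $\tau^{\partial}W$ and $K$-algebra maps $K[W]\to B^{\partial}(R)$, and in particular checking that under it $\pi_{\partial}$ corresponds on coordinate rings to post-composition with $\id\otimes\pi_B$, and that restricting $c'$ to $K[W]$ really does produce a point over $b$ rather than over some other point of $W$. The potential annoyance that $\tau^{\partial}W$ might have the zero ring as coordinate ring (Remark~\ref{empty}) does not intervene here, since the existence of $c\in\tau^{\partial}W(\Omega)$ already rules it out.
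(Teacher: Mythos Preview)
Your proof is correct and follows essentially the same approach as the paper: interpret $c$ as a $B$-operator $K[W]\to\Omega\otimes_{\ka}B$ extending $\partial$, use genericity of $b$ to pass to the fraction field $M=K(W)$ via Lemma~\ref{vergen}(1), apply Proposition~\ref{maint}, and read off the resulting $c'$ as an $L$-point of the fiber. Your write-up is somewhat more explicit about the adjunction bookkeeping and the check that $\pi_{\partial}(c'')=b$, but the argument is the same.
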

\begin{proof}
We consider the rational point $b\in W(L)$ as a $K$-algebra homomorphism and $c\in \tau^{\partial}W(\Omega)$ as a $B$-operator expanding $b$, so we get a commutative diagram (almost) as in the assumptions of Proposition \ref{maint}:
\begin{equation*}
 \xymatrix{  K  \ar[d]_{\subseteq} \ar[rr]^{\partial}  &  &  K\otimes_{\ka}B  \ar[d]^{\subseteq} \\
K[W]  \ar[d]_{b} \ar[rr]^{c}  &  &  \Omega \otimes_{\ka}B  \ar[d]^{\id_{\Omega}\otimes \pi}\\
L \ar[rr]^{\subseteq} &  &  \Omega.}
\end{equation*}
Since $b$ is a generic point of $W$ over $K$, the corresponding map $b:K[W]\to L$ is one-to-one. Let $M:=K(W)$, which is the field of fractions of $K[W]$. By Lemma \ref{vergen}(1), the $B$-operator $c$ extends to $M$ and now we are exactly in the situation from Proposition \ref{maint}. By Proposition \ref{maint}, we get a $B$-operator $c'$ which is an element of $\tau^{\partial}_bW(L)$, hence this fiber is non-empty.
\end{proof}
\begin{example}
Corollary \ref{imm} (hence also Proposition \ref{maint}) is not true without making Assumption \ref{ass2}, which an easy example below shows. Let us take $p=2$, $B:=\Ff_2[X]/(X^3)$ and $K:=\Ff_2(x,y)$, where $x$ and $y$ are algebraically independent over $\Ff_2$. We put a $B$-structure on $K$ by declaring that $\partial_1(x)=0$ and $\partial_2(x)=y$. We also take:
$$ L=K\left(x^{1/2}\right),\ W=\spec\left(K[X]/\left(X^2-x\right)\right),\ c=\left(y^{1/2},x^{1/2}\right),\ b=x^{1/2}.$$
Then the assumptions of Corollary \ref{imm} are satisfied (obviously, Assumption \ref{ass2} is not), but $\tau_bW(L)$ is empty (even $\tau_bW(L^{\sep})$ is empty).
\end{example}
We proceed now to show Corollary \ref{maint2}, which a variant of Corollary \ref{imm} and still follows directly from Proposition \ref{maint}. This variant will be crucial for the axiomatization of existentially closed fields with $B$-operators (Section \ref{secaxioms}). Assume that $V,W$ are $K$-varieties and $W\subseteq \tau^{\partial}(V)$. Let $\iota:W\to \tau^{\partial}(V)$ denote the inclusion morphism and
$$\alpha:=\pi^V_{\partial}\circ \iota:W\to V.$$
Consider the following (non-commutative!) diagram:
\begin{equation*}
 \xymatrix{  &  &  \tau^{\partial}(W)  \ar[lld]_{\pi^W_{\partial}} \ar[rrd]^{\tau^{\partial}(\alpha)} &  &   \\
W\ar[rrrr]^{\iota}  &  & &  &  \tau^{\partial}(V) .}
\end{equation*}
Using this diagram, we define the following $K$-subvariety of $\tau^{\partial}(W)$:
\begin{IEEEeqnarray*}{rCl}
E &:=& \mathrm{Equalizer}\left(\tau^{\partial}(\alpha),\iota \circ \pi^W_{\partial}\right)\\
  &=& \left\{a\in \tau^{\partial}(W)\ |\ \tau^{\partial}(\alpha)(a)=\iota \circ \pi^W_{\partial}(a)\right\}.
\end{IEEEeqnarray*}
\begin{example}\label{exe}
Assume that:
$$e=3,\ \ V=\Aa^1,\ \ W=\tau^{\partial}(V)=\Aa^3.$$
Then we have $\tau^{\partial}(W)=\Aa^9$. Let us name typical rational points as follows:
$$x\in V(K),\ (x,y,z)\in W(K),\ \left(x,x',x'',y,y',y'',z,z',z''\right)\in \tau^{\partial}(W)(K).$$
Using the above notation, we obtain:
\begin{IEEEeqnarray*}{rCl}
\tau^{\partial}(\alpha)\left(x,x',x'',y,y',y'',z,z',z''\right) &=& \left(x,x',x''\right);\\
\iota\left(\pi_W\left(x,x',x'',y,y',y'',z,z',z''\right)\right) &=& (x,y,z).
\end{IEEEeqnarray*}
Hence $E\subseteq \Aa^9$ is given by the equations $x'=y,x''=z$.
\end{example}
We have the following commutative diagram, where the morphism $\pi_E$ is defined as the restriction of the morphism $\pi^W_{\partial}:\tau^{\partial}(W)\to W$ to $E$ (as the diagram suggests)
\begin{equation*}
 \xymatrix{  E  \ar[dd]_{\subseteq} \ar[rr]^{\subseteq} \ar[rd]^{\pi_E} &  &  \tau^{\partial}(W)  \ar[dd]^{\tau^{\partial}(\alpha)} \\
 & W\ar[rd]^{\iota} &  \\
\tau^{\partial}(W)\ar[ru]^{\pi^W_{\partial}}\ar[rr]^{\iota\circ \pi^W_{\partial}} &  &  \tau^{\partial}(V).}
\end{equation*}
We can state now the aforementioned version of Corollary \ref{imm}.
\begin{cor}\label{maint2}
Suppose that $\fr_B(\ker(\pi_B))=0$ and:
\begin{itemize}
\item $\ka\subseteq K\subseteq L$ is a tower of fields;

\item $\partial$ is a $B$-operator on $K$;

\item $V$ and $W$ are varieties over $K$;

\item $W\subseteq \tau^{\partial}(V)$;

\item the projection morphism $W\to V$ is dominant;

\item there is $c\in E(\Omega)$ such that
$$b:=\pi_E(c)\in W(L)$$
is a generic point in $W$ over $K$.
\end{itemize}
Then $\pi_E^{-1}(b)(L)\neq \emptyset$.
\end{cor}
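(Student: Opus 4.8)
The plan is to mimic the proof of Corollary~\ref{imm}, but to invoke Proposition~\ref{maint} over the larger base field $K(V)$ rather than $K$, so that the equalizer condition cutting $E$ out of $\tau^{\partial}(W)$ is automatically preserved.

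The first step is to translate membership in $E$ into a compatibility of $B$-operators. Since the projection $W\to V$ is dominant, the induced map $\alpha^{*}\colon K[V]\to K[W]$ is injective, so we may view $K(V)$ as a subfield of $M:=K(W)$. By Lemma~\ref{easylemma} the inclusion $W\subseteq\tau^{\partial}(V)$ yields a $B$-operator $\partial^{W}_{V}\colon K[V]\to K[W]\otimes_{\ka}B$ extending $\partial$, and unwinding the definitions one checks $\pi_{K[W]}\circ\partial^{W}_{V}=\alpha^{*}$, where $\pi_{K[W]}$ denotes the augmentation $K[W]\otimes_{\ka}B\to K[W]$. A routine chase through the adjunction $\tau^{\partial}\dashv B^{\partial}$, using the naturality of the operators $\partial_{R}$ from Remark~\ref{natrem}(1), then shows the following: for a $K$-algebra $R$, a point $a\in\tau^{\partial}(W)(R)$ corresponds to a $B$-operator $\tilde{a}\colon K[W]\to R\otimes_{\ka}B$ extending $\partial$, and $a\in E(R)$ if and only if $\tilde{a}\circ\alpha^{*}=(\beta\otimes\id_{B})\circ\partial^{W}_{V}$ as maps $K[V]\to R\otimes_{\ka}B$, where $\beta:=\pi_{R}\circ\tilde{a}\colon K[W]\to R$ is the point $\pi^{W}_{\partial}(a)$. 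Informally, $a\in E$ says exactly that the $B$-operator carried by $a$ on $K[W]$, pulled back along $\alpha^{*}$ to $K[V]$, coincides with the push-forward of $\partial^{W}_{V}$ along $\beta$.

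Granting this, I would run the argument of Corollary~\ref{imm}. By hypothesis there is $c\in E(\Omega)$ with $b:=\pi_{E}(c)\in W(L)$ a generic point of $W$ over $K$, i.e. $b\colon K[W]\hookrightarrow L$; as in Corollary~\ref{imm} this extends to an embedding $M=K(W)\hookrightarrow L$, and $c$ corresponds to a $B$-operator $\tilde{c}\colon K[W]\to\Omega\otimes_{\ka}B$ extending $\partial$ with $\pi_{\Omega}\circ\tilde{c}=b$, which extends uniquely to $M$ by Lemma~\ref{vergen}(1). Set $\bar{\partial}:=(b\otimes\id_{B})\circ\partial^{W}_{V}\colon K[V]\to L\otimes_{\ka}B$; this is a $B$-operator extending $\partial$ whose underlying map $b\circ\alpha^{*}\colon K[V]\to L$ is injective, so by Lemma~\ref{vergen}(1) it extends uniquely to a $B$-operator $\bar{\partial}\colon K(V)\to L\otimes_{\ka}B$. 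By the translation above, the condition $c\in E(\Omega)$ forces $\tilde{c}\circ\alpha^{*}=\bar{\partial}$ on $K[V]$, hence $\tilde{c}|_{K(V)}=\bar{\partial}$ by uniqueness of the extension; in particular $\tilde{c}$ carries $K(V)$ into $L\otimes_{\ka}B\subseteq\Omega\otimes_{\ka}B$. I then apply Proposition~\ref{maint} to the tower $\ka\subseteq K(V)\subseteq L$, the $B$-operator $\bar{\partial}\colon K(V)\to L\otimes_{\ka}B$, the field $M=K(W)$, the $B$-operator $\tilde{c}\colon M\to\Omega\otimes_{\ka}B$, and the $K(V)$-algebra map $b\colon M\to L$: the upper square of its hypothesis commutes since $\tilde{c}|_{K(V)}=\bar{\partial}$, and the lower one since $\pi_{\Omega}\circ\tilde{c}=b$ already holds on $K[W]$ and both sides are ring maps out of the field $M$. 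This produces a $B$-operator $c'\colon M\to L\otimes_{\ka}B$ with $c'|_{K(V)}=\bar{\partial}$ and $\pi_{L}\circ c'=b$.

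Finally, restricting $c'$ to $K[W]$ and passing back through the adjunction gives a point $c''\in\tau^{\partial}(W)(L)$ with $\pi^{W}_{\partial}(c'')=b$; and since $c'|_{K(V)}=\bar{\partial}$ we have $c'|_{K[W]}\circ\alpha^{*}=\bar{\partial}|_{K[V]}=(b\otimes\id_{B})\circ\partial^{W}_{V}$, which by the translation is exactly the condition for $c''\in E(L)$. Hence $c''\in\pi_{E}^{-1}(b)(L)$, so this fiber is non-empty. I expect the main obstacle to be the first step: correctly identifying the equalizer condition defining $E$ with a compatibility between the $B$-operator carried by a point of $\tau^{\partial}(W)$ and the operator $\partial^{W}_{V}$ arising from $W\subseteq\tau^{\partial}(V)$. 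Once that identification is in place, the rest is simply the proof of Corollary~\ref{imm} carried out over $K(V)$ instead of $K$.
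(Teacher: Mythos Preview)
Your proposal is correct and follows essentially the same route as the paper's proof: both construct the $B$-operator $\bar{\partial}$ (called $\partial_V$ in the paper) from $K(V)$ to $L\otimes_{\ka}B$ by composing $\partial^W_V$ with $b$, interpret the condition $c\in E(\Omega)$ as saying that the $B$-operator corresponding to $c$ extends $\bar{\partial}$, and then invoke Proposition~\ref{maint} with $K(V)$ in the role of $K$ and $M=K(W)$. Your write-up is more explicit about the translation step (identifying the equalizer condition with the compatibility $\tilde{a}\circ\alpha^{*}=(\beta\otimes\id_B)\circ\partial^W_V$), which the paper compresses into a single sentence, but the argument is the same.
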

\begin{proof}
Since $W\subseteq \tau^{\partial}(V)$, by Lemma \ref{easylemma}, we get a $B$-operator $\partial_V^W$ extending $\partial$ from $K[V]$ to $K[W]$. Since the projection morphism $W\to V$ is dominant, by Lemma \ref{vergen}(1), $\partial_V^W$ extends to a $B$-operator (denoted by the same symbol) $\partial_V^W$ from $K(V)$ to $K(W)$. Since $b\in W(L)$ is generic, it gives a $K$-algebra map $b:K(W)\to L$. Therefore, we define the $B$-operator $\partial_V$ from $K(V)$ to $L$ as the composition of the following maps:
\begin{equation*}
 \xymatrix{K(V)\ar[rr]^{\partial_V^W\ \ \ \ } & & K(W)\otimes B \ar[rr]^{\ \ b\otimes \id_B} & & L\otimes B.}
\end{equation*}
As in the proof of Corollary \ref{imm}, the assumption $c\in \tau^{\partial}(W)(\Omega)$ means that $c$ is a $B$-operator from $K(W)$ to $\Omega$. The extra assumption that
$c\in E(\Omega)$ says exactly that $c$ extends the $B$-operator $\partial_V$ defined above. Therefore, we can apply Proposition \ref{maint} for $K(V)$ playing the role of $K$, $\partial_V$ playing the role of $\partial$ and (as in the proof of Corollary \ref{imm}) $M:=K(W)$. By Proposition \ref{maint}, we get a $B$-operator $c'$ which is an element of $\pi_E^{-1}(b)(L)$.
\end{proof}

\subsection{Tensor products}\label{secten}
In this subsection, we do not put any restrictions on the finite $\ka$-algebra $B$. It is well known that if we have two differential ring extensions of a given differential ring, then their tensor product (over this given ring) has a natural unique differential structure extending the differential structure on each of its factors (similarly in the difference case). We will generalize the above results to arbitrary $B$-operators.

Assume that $(K,\partial)$ is a field with a $B$-operator and that $(R,\partial_R),(S,\partial_S)$ are $K$-algebras with $B$-operators extending $\partial$. Consider the following commutative diagram of $\ka$-algebras:
\begin{equation*}
 \xymatrix{   &  & \left(R\otimes_K S\right)\otimes_{\ka}B &  &   \\
   &  &   &  &   \\
 R\otimes_{\ka}B\ar[rruu]^{t_R}  &  & R\otimes_K S \ar[uu]^{\left(\widetilde{\partial}_R,\widetilde{\partial}_S\right)} &  & S\otimes_{\ka}B\ar[lluu]_{t_S}  \\
  &  &  K\otimes_{\ka}B \ar[rru]^{\iota_S\otimes\id}\ar[llu]_{\iota_R\otimes\id} &  &  \\
R\ar[uu]^{\partial_R}\ar[rruu]^{}  &  &    &  &  S\ar[uu]_{\partial_S}\ar[lluu]^{} \\
  &  &  K, \ar[llu]_{\iota_R}\ar[rru]^{\iota_S} \ar[uu]^{\partial}&  & }
\end{equation*}
where:
\begin{itemize}
  \item $t_R$ is the tensor product over $\ka$ of the natural embedding $R\to R\otimes_KS$ and $\id_B$ (similarly for $t_S$);
  \item $\widetilde{\partial}_R:=t_R\circ \partial_R,\ \widetilde{\partial}_S:=t_S\circ \partial_S$;
  \item the map $(\widetilde{\partial}_R,\widetilde{\partial}_S)$ is given by the universal property (being the coproduct) of the tensor product over $K$, that is:
      $$(\widetilde{\partial}_R,\widetilde{\partial}_S)(a\otimes b)=\widetilde{\partial}_R(a)\cdot \widetilde{\partial}_S(b),$$
      where the product $\cdot$ occurs in the $\ka$-algebra $(R\otimes_KS)\otimes_{\ka}B$.
\end{itemize}
We can define now the tensor product (over $K$) of the $B$-operators $\partial_R,\partial_S$ as the map $(\widetilde{\partial}_R,\widetilde{\partial}_S)$, and we have proved the following.
\begin{prop}\label{tensor}
If $(K,\partial)$ is a field with a $B$-operator and $(R,\partial_R),(S,\partial_S)$ are $K$-algebras with $\partial$-operators, then there is a unique  $B$-operator on $R\otimes_KS$ such that the natural maps $R\to R\otimes_KS,S\to R\otimes_KS$ preserve the corresponding $B$-operators.
\end{prop}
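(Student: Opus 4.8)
The plan is to show that the map $(\widetilde{\partial}_R,\widetilde{\partial}_S)$ constructed in the diagram preceding the statement is the desired $B$-operator, and then verify uniqueness; in fact the commutative diagram already contains almost all of the argument, so the work is to read off from it the three things that need checking: well-definedness of the map out of the pushout, that the $b_0$-component is the identity, and uniqueness.

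First I would equip $(R\otimes_K S)\otimes_{\ka}B$ with the ``$\partial$-twisted'' $K$-algebra structure, namely the one induced by $K\xrightarrow{\partial}K\otimes_{\ka}B\to(R\otimes_K S)\otimes_{\ka}B$ (this is the structure map $j$ appearing implicitly in the middle row of the diagram). The only genuinely non-formal point is that, with respect to this structure, both $\widetilde{\partial}_R=t_R\circ\partial_R$ and $\widetilde{\partial}_S=t_S\circ\partial_S$ are \emph{$K$-algebra} homomorphisms. This is exactly the hypothesis that $\partial_R$ and $\partial_S$ extend $\partial$, combined with the observation that $t_R\circ(\iota_R\otimes\id_B)$ and $t_S\circ(\iota_S\otimes\id_B)$ are the same map $K\otimes_{\ka}B\to(R\otimes_K S)\otimes_{\ka}B$ — both equal $\mathrm{nat}\otimes\id_B$ for the canonical map $\mathrm{nat}\colon K\to R\otimes_K S$ — which is precisely the commutativity of the lower portion of the diagram. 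Granting this, the universal property of $R\otimes_K S$ as the coproduct of $R$ and $S$ in $\Alg_K$ yields a unique $K$-algebra map $D:=(\widetilde{\partial}_R,\widetilde{\partial}_S)\colon R\otimes_K S\to(R\otimes_K S)\otimes_{\ka}B$ with $D|_R=\widetilde{\partial}_R$ and $D|_S=\widetilde{\partial}_S$.

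Next I would check that $D$ is actually a $B$-\emph{operator} on $R\otimes_K S$, i.e.\ that its $b_0$-component is $\id_{R\otimes_K S}$; equivalently $\pi_{R\otimes_K S}\circ D=\id$, where $\pi_{R\otimes_K S}$ is the map induced by $\pi_B\colon B\to\ka$. Both $\pi_{R\otimes_K S}\circ D$ and $\id_{R\otimes_K S}$ are $\ka$-algebra endomorphisms of $R\otimes_K S$, and they agree on the image of $R$ and on the image of $S$: indeed $\pi_{R\otimes_K S}\circ D|_R=\pi_{R\otimes_K S}\circ t_R\circ\partial_R=\mathrm{nat}_R\circ(\pi_R\circ\partial_R)=\mathrm{nat}_R$ since $\partial_R$ is a $B$-operator, and symmetrically for $S$. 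By the uniqueness clause of the universal property they coincide, so $D$ is a $B$-operator. That the natural maps $R\to R\otimes_K S$ and $S\to R\otimes_K S$ preserve the $B$-operators is then immediate: preservation along $\mathrm{nat}_R$ means $D\circ\mathrm{nat}_R=(\mathrm{nat}_R\otimes\id_B)\circ\partial_R=t_R\circ\partial_R=\widetilde{\partial}_R=D|_R$, which holds by construction, and likewise for $S$.

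Finally, for uniqueness, I would use that a $B$-operator is in particular a ring homomorphism and that $R\otimes_K S$ is generated as a ring by the images of $R$ and $S$. Any $B$-operator $D'$ on $R\otimes_K S$ compatible with $\partial_R$ and $\partial_S$ must satisfy $D'|_R=t_R\circ\partial_R$ and $D'|_S=t_S\circ\partial_S$ by the preservation condition, hence $D'$ agrees with $D$ on a generating set and therefore $D'=D$. The main (and essentially only) obstacle is the first step — verifying that the universal property of $\otimes_K$ is applicable, i.e.\ that $\widetilde{\partial}_R$ and $\widetilde{\partial}_S$ are compatible over $K$ — and this is just a matter of unwinding the definition of ``$B$-operator extending $\partial$''; the rest is formal diagram-chasing, with the degenerate case where $R\otimes_K S$ is the zero ring being trivial.
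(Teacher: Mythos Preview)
Your proposal is correct and follows exactly the paper's approach: the paper constructs $(\widetilde{\partial}_R,\widetilde{\partial}_S)$ via the coproduct universal property in the diagram preceding the proposition and then simply declares ``we have proved the following,'' leaving the verifications you spell out (well-definedness over $K$, that the $b_0$-component is the identity, and uniqueness) implicit. You have filled in precisely the details the paper omits, with no deviation in strategy.
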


\section{Axioms}\label{secaxioms}
We still use the fixed $\ka,B,\pi_B,e$ from the beginning of Section \ref{secoppro}. We consider the language of rings extended by $d=e-1$ extra unary symbols and extra constant symbols for the elements of $\ka$. Then we get a first-order theory of rings/fields with $B$-operators in the language defined above.  We use the name \emph{$B$-field}, when we talk about a field with a $B$-operator. Similarly for \emph{$B$-extensions}, \emph{$B$-isomorphisms}, etc.

Assume now that $\ch(\ka)=p>0$. Our aim in this section is to prove the converse implication to Theorem \ref{nomc} below, and this theorem (after a bit of work) can be extracted from \cite{MS2} and \cite{Pierce2}. We need a lemma first.
\begin{lemma}\label{gencon}
Assume that $\ker(\fr_B)=\nil(B)$ and:
$$B\cong_{\ka} B_1\times \ldots \times B_l$$
such that $B_1,\ldots,B_l$ are local, the residue field of each $B_i$ coincides with $\ka$, and $B_j\neq \ka$ for some $j\in \{1,\ldots,l\}$. Assume also that $\partial$ is a $B$-operator on $K$.
\begin{enumerate}
  \item Then $\partial$ is of the form:
$$\partial=\left(\sigma_1,\ldots,\sigma_l,\partial_1,\ldots,\partial_N\right),$$
where $\sigma_1,\ldots,\sigma_l$ are endomorphisms of $K$ (one of them is the identity map), $N>0$ and $\partial_1,\ldots,\partial_N$ vanish on $K^p$.
  \item Suppose that $a\in K\setminus K^p$ is such that for all $i_1,\ldots,i_n\in \{1,\ldots,l\}$ (they may repeat) and all $j\in \{1,\ldots,N\}$ we have:
$$\left(\partial_j\circ \sigma_{i_1}\circ \ldots \circ \sigma_{i_n}\right)(a)=0.$$
Then $\partial$ extends to a $B$-operator on a field $L$ such that:
$$K\left(a^{1/p}\right)\subseteq L\subseteq K^{1/p}.$$
\end{enumerate}
\end{lemma}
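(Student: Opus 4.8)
The plan is to handle the two parts in turn, with part (1) being essentially a bookkeeping exercise and part (2) carrying the real content. For part (1), I would use the product decomposition $B \cong_{\ka} B_1 \times \cdots \times B_l$ to split the $B$-operator $\partial$ coordinatewise. Applying $\id_K \otimes$ (projection onto $B_i$) to the structure map $\partial : K \to K \otimes_{\ka} B$ yields a $B_i$-operator on $K$ for each $i$. When $B_i = \ka$ this $B_i$-operator is simply a $\ka$-algebra endomorphism $\sigma_i$ of $K$ (Example \ref{ex}(1)), and for the index where $B_i = \ka$ coming from the requirement $\pi(b_0)=1$ we get the identity. When $B_i \neq \ka$ is local, choosing a $\ka$-basis of the maximal ideal and writing out the components gives the functions $\partial_j$; since $B_i$ is local with residue field $\ka$ and $\ker(\fr_B) = \nil(B)$, Assumption \ref{ass2} holds on the local factor, so Lemma \ref{frl} applies and each such $\partial_j$ vanishes on $K^p$. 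The hypothesis $B_j \neq \ka$ for some $j$ guarantees $N > 0$.

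For part (2), I would mimic the proof of Lemma \ref{pextend}: we want to extend $\partial$ across the purely inseparable step $K \subseteq K(a^{1/p})$. The element $a^{1/p}$ is the one place where new values of the operators must be specified. The key observation is that all the ``new'' data one needs to pin down $\partial$ at $a^{1/p}$ — namely the values $(\partial_j \circ \sigma_{i_1} \circ \cdots \circ \sigma_{i_n})(a^{1/p})$ and the iterated images $\sigma_{i_1}\circ\cdots\circ\sigma_{i_n}(a^{1/p})$ — are constrained by the $p$-th power relation $(a^{1/p})^p = a$ together with the fact that the $\partial_j$ vanish on $p$-th powers. Concretely, any composite of the $\sigma$'s applied to $a^{1/p}$ has its $p$-th power equal to the corresponding composite applied to $a$, which by hypothesis lies in a subfield over which the relevant operator-values are forced to be $p$-th powers or zero; so the extension we are trying to build exists precisely because the obstruction (which would be a non-$p$-th-power appearing where a $p$-th power is required) has been assumed away. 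I would therefore define $\sigma_i(a^{1/p})$ to be the $p$-th root of $\sigma_i(a)$ inside $K^{1/p}$ (enlarging the field as needed but staying inside $K^{1/p}$), set $\partial_j(a^{1/p}) = 0$ for all $j$, and check that this is consistent: the only relations among iterated applications of the operators to $a^{1/p}$ are, after raising to the $p$-th power, relations among iterated applications to $a$, which hold in $K$; and the vanishing conditions are exactly the hypothesis $(\partial_j \circ \sigma_{i_1} \circ \cdots \circ \sigma_{i_n})(a) = 0$ read through the $p$-th power map. One then closes up under the finitely many operators to land in a field $L$ with $K(a^{1/p}) \subseteq L \subseteq K^{1/p}$.

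The main obstacle I anticipate is the consistency check in part (2): one must verify that declaring $\partial_j(a^{1/p}) = 0$ and $\sigma_i(a^{1/p}) = \sigma_i(a)^{1/p}$ does not conflict with the $\ka$-algebra homomorphism requirement for the full structure map $\partial : L \to L \otimes_{\ka} B$, which amounts to checking multiplicativity and additivity simultaneously across all $l$ local/étale factors and all the $\partial_j$. The cleanest way to organize this is to build $L$ as an increasing union: start with $K$, adjoin $a^{1/p}$ and all iterated $\sigma$-images of it (finitely many, since each $\sigma_i$ is an endomorphism and we only need the field they generate), note that this is still purely inseparable of exponent one over $K$ hence sits inside $K^{1/p}$, and then invoke Lemma \ref{pextend} factor-by-factor — for the endomorphism factors one extends $\sigma_i$ along the purely inseparable extension in the unique possible way, and for each local factor $B_i$ one applies Lemma \ref{pextend} with $B_i$ in place of $B$, the hypothesis $(\partial_j \circ \text{composites})(a) = 0$ being exactly what ensures $a^{1/p}$ and its $\sigma$-translates land in the relevant ring of constants so that the lemma's hypothesis $M^p \subseteq K_r$ is met.
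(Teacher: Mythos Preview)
Your approach is essentially the paper's: define $L$ by adjoining to $K$ the $p$-th roots of all $\sigma$-iterates of $a$, extend the $\sigma_i$ to $L$ (this is exactly the $B_{\red}$-structure), verify $L^p \subseteq K_r = \bigcap_j \ker(\partial_j)$ from the hypothesis, and invoke Lemma~\ref{pextend}; the paper applies that lemma once for the full $B$ (with $B_{\red} = \ka^l$) rather than factor-by-factor, but this is only an organizational difference. One small slip in your part (1): every local factor $B_i$, not only those with $B_i = \ka$, contributes an endomorphism $\sigma_i$ (via the residue projection $B_i \to \ka$, i.e.\ the $b_0 = 1$ component of the $B_i$-operator), so you should extract $\sigma_i$ from each $B_i$-operator before reading off the $\partial_j$'s from a basis of its maximal ideal --- this is how one gets exactly $l$ endomorphisms as in the statement.
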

\begin{proof}
The first item is clear by Lemma \ref{frl}.

For the proof of the second item, let us define:
$$L:=K\left(\left(\sigma_{i_1}\circ \ldots \circ \sigma_{i_n}\right)(a)^{1/p}\ |\ i_1,\ldots,\i_n\in \{1,\ldots,l\},n\in \Nn\right).$$
The endomorphisms $\sigma_1,\ldots,\sigma_l$ clearly extend to (unique) endomorphisms $\sigma_1',\ldots,\sigma_l'$ of $L$, i.e. $L$ is a
$B_{\red}$-field in the terminology introduced before Lemma \ref{pextend}.  By our assumption on
$a$ (and Lemma \ref{frl}), we have
$$L^p\subseteq K_0:=\bigcap_{j=1}^N\ker(\partial_j)=K_r$$
(again in the terminology introduced before Lemma \ref{pextend}). Thus, the assumptions of Lemma \ref{pextend} are satisfied, and we are done.
\end{proof}
\begin{theorem}\label{nomc}
Suppose that the theory of fields with $B$-operators has a model companion. Then the following two conditions are satisfied:
\begin{enumerate}
\item $\nil(B)=\ker(\fr_B)$;

\item $B$ is either local or
$$B\cong_{\ka} \ka_1\times \ldots \times \ka_n,$$
where each $\ka\subseteq \ka_i$ is a finite separable field extension.
\end{enumerate}
\end{theorem}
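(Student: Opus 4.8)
The strategy is contrapositive: I assume one of the two conditions fails and produce a field $K$ with a $B$-operator $\partial$ together with an existentially closed configuration that cannot be completed, thereby violating the amalgamation/embedding behavior that any model companion would force. Concretely, suppose first that condition (1) fails, i.e. $\nil(B)\neq\ker(\fr_B)$. Then, since the inclusion $\ker(\fr_B)\subseteq\nil(B)$ always holds, there is a nilpotent element $\alpha\in\nil(B)$ with $\alpha^p\neq 0$. This is precisely the hypothesis of \cite[Prop. 7.2]{MS2}, so the theory of $B$-operators has no model companion — contradiction. (One must be slightly careful here if $B$ is not local, but the existence of $\alpha$ with $\alpha\notin\ker(\fr_B)$ can be arranged using the constant $b_0$ with $\pi_B(b_0)=1$, so that $\alpha\in\ker(\pi_B)$ after subtracting a scalar multiple of $b_0$; then the cited proposition applies verbatim.) So condition (1) holds.

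**The non-local case.** Now assume condition (1) holds, so by Lemma~\ref{schcond}(2) either $B$ is local (and we are done) or $\nil(B)$ is a proper subset realizing $B/\nil(B)=B_{\red}$ as a nontrivial product. Using the structure of finite commutative algebras over a field, write $B\cong_{\ka}B_1\times\cdots\times B_l$ with each $B_i$ local. Condition (1), via Lemma~\ref{schcond}, forces each $B_i$ to have residue field exactly $\ka$ — wait, more precisely, I need to handle the possibility of inseparable residue extensions; the point is that if some $B_i$ is not a separable field extension of $\ka$, then either $B_i$ has a nonzero nilpotent (putting us back to analyze via condition (1)) or $B_i$ is an inseparable field extension. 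The claim of condition (2) is that if $B$ is not local, then $B$ is a product of \emph{separable} field extensions of $\ka$. So there are two sub-cases to rule out when $B$ is non-local: (a) some factor $B_j$ is local but not a field (i.e. has a nonzero nilpotent), and (b) some factor $B_j$ is an inseparable field extension of $\ka$.

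**Ruling out a non-field local factor.** For sub-case (a), suppose $B\cong B_1\times\cdots\times B_l$ with each $B_i$ local, residue field $\ka$, and some $B_j\neq\ka$; by condition (1) each $B_i$ satisfies $\ker(\fr_{B_i})=\nil(B_i)$. This is exactly the setup of Lemma~\ref{gencon}. The idea is to build a $B$-field $(K,\partial)$ of the form $\partial=(\sigma_1,\ldots,\sigma_l,\partial_1,\ldots,\partial_N)$ with $N>0$ and an element $a\in K\setminus K^p$ on which all the composites $\partial_j\circ\sigma_{i_1}\circ\cdots\circ\sigma_{i_n}$ vanish, so that by Lemma~\ref{gencon}(2) the operator extends to a $B$-field $L$ with $K(a^{1/p})\subseteq L\subseteq K^{1/p}$. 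One then plays this against an existentially closed model: in a model companion, any quantifier-free type realized in a $B$-extension is realized in the model, so $a$ would have a $p$-th root there, but one can simultaneously set up a $B$-field extension \emph{forbidding} such a root (as in Remark~\ref{empty}, where $\tau^{\partial}(K(a^{1/p}))=\{0\}$ for a derivation and $a\notin K^{\partial}$) — the failure to amalgamate these two $B$-extensions of $(K,\partial)$ over $(K,\partial)$ contradicts the model companion axioms. This is the step I expect to be the main obstacle: organizing the two competing $B$-extensions and extracting a concrete first-order sentence whose truth value is forced both ways.

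**Ruling out an inseparable field factor, and assembly.** For sub-case (b), if some $B_j$ is a field but $\ka\subseteq B_j$ is inseparable, pick $\beta\in B_j$ with minimal polynomial over $\ka$ having a repeated root; the associated coordinate endomorphism $\sigma_j$ of any $B$-field $K$ need not be surjective, and one leverages the inseparability to again produce incompatible $B$-extensions — concretely one constructs, over a suitable base $(K,\partial)$, two $B$-extensions disagreeing on whether a certain element lies in the image of $\sigma_j$, which no existentially closed $B$-field can reconcile. Finally, I assemble: condition (1) comes for free from \cite[Prop. 7.2]{MS2}; assuming it and assuming $B$ non-local, sub-cases (a) and (b) are the only ways condition (2) can fail, and each has just been excluded; hence both conditions hold, completing the contrapositive. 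Throughout, the only genuinely new technical input beyond \cite{MS2} and \cite{Pierce2} is the bookkeeping provided by Lemma~\ref{gencon} together with the prolongation machinery of Section~\ref{secoppro}; the rest is a careful translation of the characteristic-zero obstructions (and the known \cite{Pierce2} results on products of separable factors, which identify that remaining case with ACFA$_{p,d}$) into the present language.
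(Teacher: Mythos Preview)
Your plan for sub-case (a) has a genuine gap. You propose to find $(K,\partial)$ and $a\in K\setminus K^p$ on which \emph{all} composites $\partial_j\circ\sigma_{i_1}\circ\cdots\circ\sigma_{i_n}$ vanish, then argue via ``two competing $B$-extensions whose failure to amalgamate contradicts the model companion axioms.'' But a model companion need not have amalgamation (that would make it a model completion), so this reasoning is invalid. Worse, the vanishing condition you need is an infinite conjunction, while the only first-order way to certify $a\notin K^p$ in a manner that survives passage to an existentially closed extension is to exhibit a \emph{non}-vanishing composite --- exactly the opposite. The paper resolves this tension with an ultraproduct (following \cite[Theorem 5.2]{Pierce2}): for each $m$ one builds $\mathbf{K}_m$ on $\Ff_p(X_0,X_1,\ldots)$ so that the first $m$ composites vanish on $X_0$ but the $m$-th does not (forcing $X_0\notin K_m^p$, and this persists to an e.c.\ extension $\mathbf{L}_m$); then in the ultraproduct $\mathbf{N}$ of the $\mathbf{L}_m$, {\L}o\'s gives both $X_0\notin N^p$ and the full vanishing condition, so Lemma~\ref{gencon}(2) shows $\mathbf{N}$ is not existentially closed --- yet if the model companion existed, $\mathbf{N}$ would be a model of it. Your Remark~\ref{empty} citation is also off: that remark applies when $a\notin K^\partial$, whereas your $a$ is a constant by construction.

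Two further issues. First, your claim that ``condition (1), via Lemma~\ref{schcond}, forces each $B_i$ to have residue field exactly~$\ka$'' is false; Lemma~\ref{schcond}(2) characterizes Assumption~\ref{ass2}, not condition~(1) alone, and says nothing about residue fields. Second, your sub-case~(b) is only a gesture (``disagreeing on whether a certain element lies in the image of $\sigma_j$''), and again leans on amalgamation. The paper instead handles both the residue-field issue and sub-case~(b) in one stroke: it observes that if $B$-fields have a model companion then so do $B_K$-fields for $K=\ka^{\alg}$, over which every local factor has residue field $K$ and any non-separable or non-field factor of $B$ produces a non-reduced factor of $B_K$; this reduces everything to sub-case~(a) with residue field $\ka$, already handled by the ultraproduct argument.
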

\begin{proof} The item $(1)$ is exactly \cite[Proposition 7.2]{MS2}.

To show the item $(2)$, we note first that by the Structure Theorem for Artinian Rings (see \cite[Theorem 8.7]{atmac}), we have the following isomorphism:
$$B\cong_{\ka} B_1\times \ldots \times B_l,$$
where $B_1,\ldots,B_l$ are local finite $\ka$-algebras (the statement of \cite[Theorem 8.7]{atmac} is about Artin \emph{rings} only, but it is easy to see that the same proof gives a $\ka$-algebra isomorphism). Assume now that $\partial$ is a $B$-operator on $K$ and that $B$ satisfies the condition in the item $(1)$ and does not satisfy the condition in the item $(2)$.

Assume first that we are in the situation from Lemma \ref{gencon} (i.e. the residue field of each $B_i$ coincides with $\ka$ and some $B_j$ is not a field). Then $\partial$ is of the form:
$$\partial=\left(\sigma_1,\ldots,\sigma_l,\partial_1,\ldots,\partial_N\right),$$
where $\sigma_1,\ldots,\sigma_l$ are endomorphisms of $K$ (one of them is the identity map, say $\sigma_1=\id$), $l>1,N>0$ and  $\partial_1,\ldots,\partial_N$ vanish on $K^p$.
We can finish the proof exactly as in \cite[Theorem 5.2]{Pierce2}, which we describe below. For $m\in \Nn$, let $\mathbf{K}_m$ be the field $\Ff_p(X_0,X_1,\ldots)$ with a $B$-operator such that $\sigma_2,\ldots,\sigma_l$ move $X_i$ to $X_{i+1}$ for all $i\in \Nn$ (so this does not depend on $m$)  and for all $j\in \{1,\ldots,N\}$, we have:
$$\partial_j(X_0)=0,\ \ldots\ ,\ \partial_j(X_{m-1})=0,\ \partial_j(X_{m})=1,\ \partial_j(X_{m+1})=1,\ \ldots \ .$$
For each $m\in \Nn$, we extend $\mathbf{K}_m$ to an existentially closed field with a $B$-operator $\mathbf{L}_m=(L_m,\partial^{\mathbf{L}_m})$. Let $\mathbf{N}=(N,\partial^{\mathbf{N}})$ be the ultraproduct (with respect to a non-principal ultrafilter) of $\mathbf{L}_m$'s. For each $m\in \Nn$, we have:
$$\partial_1^{\mathbf{L}_m}\left(\left(\sigma_2^{\mathbf{L}_m}\right)^m(X_0)\right)=\partial_1^{\mathbf{L}_m}\left(X_m\right)=1\neq 0,$$
hence $X_0\notin L_m^p$. By {\L}o\'{s}'s theorem, $X_0\notin N^p$.  Let us fix now $i_1,\ldots,i_n\in \{1,\ldots,l\}$  and $j\in \{1,\ldots,N\}$. Then for all $m>n$, we have:
$$\left(\partial_j^{\mathbf{L}_m}\circ \sigma_{i_1}^{\mathbf{L}_m}\circ \ldots \circ \sigma_{i_n}^{\mathbf{L}_m}\right)(X_0)=0.$$
By {\L}o\'{s}'s theorem, we get that for all $i_1,\ldots,i_n\in \{1,\ldots,l\}$  and all $j\in \{1,\ldots,N\}$, we have:
$$\left(\partial_j^{\mathbf{N}}\circ \sigma_{i_1}^{\mathbf{N}}\circ \ldots \circ \sigma_{i_n}^{\mathbf{N}}\right)(X_0)=0.$$
Using Lemma \ref{gencon}(2) (for $a:=X_0$), we see that $\mathbf{N}$ is not an existentially closed field with a $B$-operator, which is a contradiction.

Consider now the general case and let $\ka\subseteq K$ be a field extension. Let us denote
$$B_K:=B\otimes_{\ka}K.$$
We have the notions of $B_K$-fields, $B_K$-field extensions etc. It is easy to see that if $(L,\partial)$ is a $B_K$-field, then the notion of a $B$-field extension of $(L,\partial)$ coincides the notion of a $B_K$-field extension of $(L,\partial)$. Therefore, if the theory of $B$-fields has a model companion, then the theory of $B_K$-fields has a model companion as well. Let us take $K:=\ka^{\alg}$. Then $B_K$ decomposes as the product of local $K$-algebras such that all the residue fields coincide with $K$. To apply the case which was already proved above (and, therefore, to finish the proof), it is enough to notice that not all the local $K$-algebras in this decomposition are fields. If this is not the case (that is, if $B_K$ is a product of fields), then $B_K$ is reduced. However, for each $i\in \{1,\ldots, l\}$ both $B_i$ and $(B_i)_K$ embed into $B_K$ (as $\ka$-algebras). Therefore, if some $B_i$ is not a field, then $B_i$ is not reduced (by an easy argument, e.g. as in the first paragraph of the proof of Lemma \ref{schcond}), and then $B_K$ is also not reduced. If some $B_i$ is a field extension of $\ka$ which is not separable, then, by \cite[(27.C) Lemma 1]{mat0}, $(B_i)_K$ is not reduced, and then $B_K$ is not reduced either.
\end{proof}
\begin{remark}
By Theorem \ref{nomc} and Lemma \ref{schcond}, we obtain that (while trying to show the existence of a model companion) one can safely make Assumption \ref{ass2} from Section \ref{secoppro} (see also Corollary \ref{supermain}). Therefore, we make this assumption now (except for Corollary \ref{supermain}), that is we assume that:
$$\fr_{B}\left(\ker(\pi_{B})\right)=0,$$
even when this assumptions is not necessary in some of the results we will show below. Note that, by Lemma \ref{schcond}, this assumption is equivalent to $B$ being local and such that
$$\ker(\fr_B)=\nil(B).$$
\end{remark}
Let us fix a field extension $\ka\subseteq K$ and a $B$-operator $\partial$ on $K$. For $K$-algebras $R$ and $T$, we will consider $B$-operators from $R$ to $T$ extending $\partial$. We will call them \emph{$\partial$-operators} and sometimes write them in the form
$$\bar{D}=(f,D):R\to T,$$ where $f=\bar{D}_0$ is a $K$-algebra map from $R$ to $T$. It will be often the case that $f$ is the inclusion map. We also fix a field extension $K\subseteq \Omega$, where $\Omega$ is a big saturated algebraically closed field.
\begin{notation}\label{not}
We will use the following terminology originating from the differential case considered in \cite{Lando} (which was modeled on the difference case from \cite{cohn}). Let $n>0$ and $a\in \Omega^n$, $a',b\in \Omega^{nd}$, $b'\in \Omega^{nd^2}$.
\begin{itemize}
\item If $\bar{D}:K[a]\to K[a,a']$ is a $\partial$-operator such that $\bar{D}(a)=(a,a')$ (i.e. $f$ above is the inclusion map and $D(a)=a'$), then we call $\bar{D}$ a \emph{$B$-kernel}.
\item If $\bar{D}:K[a]\to K[a,a']$ is a $B$-kernel and $\bar{D}':K[a,a']\to K[a,a',b,b']$ is a $B$-kernel such that $\bar{D}'|_{K[a]}=\bar{D}$ (in particular, $a'=b$), then we call $\bar{D}'$ a \emph{$B$-prolongation} of $\bar{D}$.
\item If $\bar{D}:K[a]\to K[a,a']$ is a $B$-kernel, $L$ is a field extension of $K(a,a')$ and $\partial'$ is a $B$-operator on $L$ (in particular, we have $\partial'(L)\subseteq L$) extending $\bar{D}$, then we call $(L,\partial')$ a \emph{$B$-regular realization} of $\bar{D}$.
\end{itemize}
\end{notation}
For the next two results (Lemma \ref{easylemma2} and Proposition \ref{kerprol}), we fix the following data:
\begin{itemize}
\item $n>0$, $a\in \Omega^n$ and $a'\in \Omega^{nd}$;

\item $V=\locus_K(a)$;

\item $W=\locus_K(a,a')$.
\end{itemize}
We state below a very general fact, which follows immediately from Lemma \ref{easylemma}.
\begin{lemma}\label{easylemma2}
The following are equivalent.
\begin{enumerate}
\item The extension $K\subseteq K[a,a']$ has a $B$-kernel structure such that $D(a)=a'$.

\item $W\subseteq \tau^{\partial}(V)$.
\end{enumerate}
\end{lemma}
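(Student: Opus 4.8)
The plan is to show the equivalence by unwinding the defining adjunction for $\tau^{\partial}$ and applying Lemma \ref{easylemma} directly. Recall that $V = \locus_K(a) = \spec(K[V])$ where $K[V] = K[\bar x]/I(a)$ and $W = \locus_K(a,a') = \spec(K[W])$ where $K[W] = K[\bar x, \bar y]/I(a,a')$; the coordinate projection $(\bar x, \bar y) \mapsto \bar x$ induces a closed immersion-type description and in particular gives us a dominant morphism $W \to V$ whose comorphism is the inclusion $K[V] \hookrightarrow K[W]$ (injective precisely because $a' \in \Omega^{nd}$, so no new relations among the $\bar x$ are forced, i.e. $V$ is exactly the locus of $a$).

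First I would prove $(1)\Rightarrow(2)$. Suppose $\bar D = (\subseteq, D): K[V] \to K[W]$ is a $\partial$-operator with $D(a) = a'$. By the definition of a $B$-operator from $K[V]$ to $K[W]$, this is the same as a $K$-algebra homomorphism $K[V] \to K[W] \otimes_{\ka} B = B^{\partial}(K[W])$ extending $\partial$. By the adjunction defining $\tau^{\partial}$, namely the natural bijection
\begin{equation*}
\Mor_{\Alg_K}\left(K[V], B^{\partial}(K[W])\right) \longleftrightarrow \Mor_{\Alg_K}\left(\tau^{\partial}(K[V]), K[W]\right),
\end{equation*}
this $\partial$-operator corresponds to a $K$-algebra morphism $\tau^{\partial}(K[V]) \to K[W]$, i.e. a morphism of $K$-schemes $W \to \tau^{\partial}(V)$. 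One then checks that this morphism is a closed immersion onto a subvariety: the point is that $\bar D$ has the form $(\subseteq, D)$ with $D(a) = a'$, so the composite $K[V] \to \tau^{\partial}(K[V]) \to K[W]$ is the inclusion (this is exactly the statement that $\bar D$ extends the given presentation), and tracking the universal coordinates one sees that $\tau^{\partial}(K[V]) \to K[W]$ is surjective since $K[W]$ is generated over $K[V]$ by $a'$, which is the image of the universal prolongation coordinates. Hence $W \subseteq \tau^{\partial}(V)$ as $K$-varieties.

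For the converse $(2)\Rightarrow(1)$, if $W \subseteq \tau^{\partial}(V)$, then Lemma \ref{easylemma} (applied with this $V$ and $W$) immediately produces a $B$-operator $\partial^W_V: K[V] \to K[W] \otimes_{\ka} B$ extending $\partial$, obtained as the composite of $\partial_{K[V]}$ with the map $K[\tau^{\partial}(V)] \to K[W]$ induced by the inclusion. It remains to see that this $\partial^W_V$ is actually a $B$-kernel in the sense of Notation \ref{not}, i.e. that its zeroth component is the inclusion $K[V] \hookrightarrow K[W]$ and that $D(a) = a'$. The first is automatic since $\partial^W_V$ extends $\partial$ and the zeroth component of $\partial_{K[V]}$ composed with $K[\tau^{\partial}(V)] \to K[W]$ is the inclusion by naturality of $\pi_{\partial}$; the second, $D(a) = a'$, is forced by the fact that the inclusion $W \hookrightarrow \tau^{\partial}(V)$ sends the generic point $(a,a')$ of $W$ to a point whose image under $\partial_V$ records exactly the prolongation coordinates as $a'$ — this is where one uses the explicit description of $\tau^{\partial}$ from the end of Section 3 of \cite{MS2} relating the extra coordinates of $\tau^{\partial}(V)$ to the components $\partial_1, \ldots, \partial_d$.

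The main obstacle, and the only place requiring genuine care rather than formal nonsense, is matching the universal coordinates on $\tau^{\partial}(V)$ with the concrete tuple $a'$: one must verify that under the adjunction, "$D(a) = a'$" translates to "$(a,a')$ lies in $W \subseteq \tau^{\partial}(V)$ with $a'$ in the prolongation slots" and not some twisted version. This is purely a bookkeeping matter about the explicit form of $\tau^{\partial}$, but it is what makes the equivalence nontrivial; everything else is a direct invocation of Lemma \ref{easylemma} and the adjunction. I would phrase the whole argument as: a $B$-kernel structure on $K \subseteq K[a,a']$ with $D(a) = a'$ is, by adjunction, the same datum as a factorization of the canonical morphism through $W$, which is exactly the containment $W \subseteq \tau^{\partial}(V)$.
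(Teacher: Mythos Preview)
Your proposal is correct and follows essentially the same approach as the paper, which simply records that the lemma ``follows immediately from Lemma \ref{easylemma}'' (i.e., from the defining adjunction for $\tau^{\partial}$). Your write-up is just a more explicit unpacking of that adjunction in both directions, together with the coordinate bookkeeping the paper leaves implicit; no genuinely different idea is involved.
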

We assume now that $K[a]\subseteq K[a,D(a)]$ is a $B$-kernel, so $a'=D(a)$ and
$$V=\locus_K(a),\ \ \ W=\locus_K(a,D(a)).$$
We can prove now the main criterion about prolonging $B$-kernels to $B$-regular realizations. It is analogous to and plays the same role as Kernel-Prolongation Lemmas from \cite{BK} (e.g. Lemma 2.13, Proposition 3.7 or Proposition 3.17 in \cite{BK}). We advice the reader to recall the definition of the $K$-subvariety $E\subseteq \tau^{\partial}(W)$ and the morphism $\pi_E$, which appear before Corollary \ref{maint2}.
\begin{prop}[Kernel-Prolongation Lemma]\label{kerprol}
The following are equivalent.
\begin{enumerate}
\item The morphism $\pi_E:E\to W$ is dominant.

\item The $B$-kernel $\bar{D}:K[a]\to K[a,D(a)]$ has a $B$-regular realization on the field $K(a,D(a))$.

\item The $B$-kernel $\bar{D}:K[a]\to K[a,D(a)]$ has a $B$-regular realization.

\item The $B$-kernel $\bar{D}:K[a]\to K[a,D(a)]$ has a $B$-prolongation.
\end{enumerate}
\end{prop}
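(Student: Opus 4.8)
The plan is to prove the four conditions equivalent by establishing the cycle $(1)\Rightarrow(4)\Rightarrow(3)\Rightarrow(2)\Rightarrow(1)$, where the genuinely new content lies in $(1)\Rightarrow(4)$ and $(2)\Rightarrow(1)$; the implications $(3)\Rightarrow(2)$ and $(4)\Rightarrow(3)$ are essentially bookkeeping. Concretely, $(4)\Rightarrow(3)$: given a $B$-prolongation $\bar{D}':K[a,D(a)]\to K[a,D(a),b,b']$ of $\bar{D}$, one iterates to build an increasing chain of $B$-kernels and takes the direct limit, obtaining a $B$-operator on a finitely generated field extension $L$ of $K(a,D(a))$; the point is that each new layer lives over the previous one, so the limit is an actual $B$-operator $\partial'$ with $\partial'(L)\subseteq L$, i.e. a $B$-regular realization. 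And $(3)\Rightarrow(2)$: if $(L,\partial')$ is any $B$-regular realization of $\bar{D}$, restrict the situation to the subfield $K(a,D(a))\subseteq L$; using Lemma \ref{easylemma} (applied to $W\subseteq\tau^\partial(V)$) together with the fact that $\partial'$ extends $\bar D$, one checks that $\partial'$ restricted to $K(a,D(a))$ is again a $B$-operator, giving a $B$-regular realization on $K(a,D(a))$ itself.

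For $(2)\Rightarrow(1)$: suppose $\bar D$ has a $B$-regular realization $(L,\partial')$ with $L=K(a,D(a))$. Then $\partial'$ is a $B$-operator from $K(W)$ to itself extending $\partial$; equivalently it gives a $K$-point $c$ of $\tau^\partial(W)$ whose image under $\pi^W_\partial$ is the generic point $b=(a,D(a))$ of $W$. I claim this $c$ in fact lies in $E$: unwinding the definitions of $\alpha=\pi^V_\partial\circ\iota$ and of the equalizer $E$, the condition $\tau^\partial(\alpha)(c)=\iota\circ\pi^W_\partial(c)$ translates into the statement that the $B$-operator associated to $c$, when composed down to $K(V)$, agrees with the one induced from $\bar D$ on $K[a]$ — which holds precisely because $\partial'$ extends the kernel $\bar D$ and $D(a)=a'$. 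Hence $E(L)$ contains a point over the generic point $b$ of $W$, so $\pi_E(E)$ contains the generic point of $W$, i.e. $\pi_E:E\to W$ is dominant.

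For the main implication $(1)\Rightarrow(4)$: assume $\pi_E:E\to W$ is dominant. Then over a generic point $b=(a,D(a))$ of $W$ the fiber $\pi_E^{-1}(b)$ is nonempty over $\Omega$; pick $c\in E(\Omega)$ with $\pi_E(c)=b$. Now I am exactly in the setting of Corollary \ref{maint2} (with this $V$, $W$, the dominant projection $W\to V$, and $L:=K(a,D(a))$, noting $b$ is generic in $W$ over $K$), which yields a point $c'\in\pi_E^{-1}(b)(L)$. This $c'$ is a $B$-operator $\bar D'$ on $K(W)=K(a,D(a))$ extending $\partial$, and since $c'\in E(L)$ lies over $b$, the restriction of $\bar D'$ to $K[a]$ is $\bar D$ (again by the equalizer description of $E$). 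Finally, writing $\bar D'(a,D(a))=(a,D(a),b,b')$ with entries in $K(a,D(a))$, one clears denominators — replacing the generators by finitely many polynomial expressions in $a,D(a)$ — to see that $\bar D'$ restricts to a $B$-kernel on $K[a,D(a)]$, which is by construction a $B$-prolongation of $\bar D$. The hard part is $(1)\Rightarrow(4)$, and within it the only real difficulty is verifying that the $L$-rational point produced by Corollary \ref{maint2} genuinely encodes a $B$-kernel structure on $K[a,D(a)]$ compatible with $\bar D$; once the identification of $E(L)$-points lying over $b$ with $B$-operators on $K(a,D(a))$ extending $\bar D$ is set up carefully, everything else is a routine diagram chase through the definitions preceding Corollary \ref{maint2}.
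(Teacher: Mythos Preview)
Your cycle runs in the wrong direction on the ``bookkeeping'' implications, and both of those steps fail as written.

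For $(3)\Rightarrow(2)$: you claim that a $B$-regular realization $(L,\partial')$ with $L\supseteq K(a,D(a))$ restricts to a $B$-operator on $K(a,D(a))$. But there is no reason $\partial'(K(a,D(a)))\subseteq K(a,D(a))\otimes_{\ka}B$; in particular $\partial'(D(a))$ may have coordinates outside $K(a,D(a))$. Lemma~\ref{easylemma} only gives an operator $K[V]\to K[W]\otimes_{\ka}B$, not one on $K(W)$, so it does not rescue this step. This implication is in fact the hard one, and in the paper it is obtained only by passing through $(1)$.

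For $(4)\Rightarrow(3)$: you propose to iterate a single $B$-prolongation to build a tower and take a direct limit. But statement $(4)$ gives you exactly one prolongation $\bar D'$; nothing tells you that the new $B$-kernel $\bar D'$ itself admits a prolongation, so the iteration has no second step.

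The paper's cycle is $(1)\Rightarrow(2)\Rightarrow(3)\Rightarrow(4)\Rightarrow(1)$, in which $(2)\Rightarrow(3)$ and $(3)\Rightarrow(4)$ are the genuinely trivial implications (a realization on $K(a,D(a))$ is a realization; any realization restricted to $K[a,D(a)]$ yields a prolongation). Your two substantive arguments are essentially correct but mislabeled: your ``$(1)\Rightarrow(4)$'' already proves $(1)\Rightarrow(2)$, since the point $c'\in\pi_E^{-1}(b)(L)$ with $L=K(a,D(a))$ corresponds to a $B$-operator $K[a,D(a)]\to K(a,D(a))\otimes_{\ka}B$ with identity zeroth component, and Lemma~\ref{vergen}(1) extends it to a $B$-operator on $K(a,D(a))$ directly (no denominator-clearing is needed). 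Your ``$(2)\Rightarrow(1)$'' is fine, and the same argument with the weaker hypothesis gives the paper's $(4)\Rightarrow(1)$. Reorganize along the paper's cycle and your proof goes through.
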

\begin{proof}
$(1)\Rightarrow (2)$. Since the projection morphism $\pi_E:E\to W$ is dominant and $(a,D(a))$ is a generic point of $W$, we get that $(a,D(a))$ is of the form $\pi_E(c)$ for some $c\in E(\Omega)$, hence the assumptions of Corollary \ref{maint2} (for $b:=(a,D(a))$ and $L:=K(b)$) are satisfied. We note that by the definition of the morphism $\pi_E$, for any $K(a,D(a))$-algebra $T$, we have:
\begin{equation}
\pi_E^{-1}(a,D(a))(T)=\left\{t\in T^{nd^2}\ |\ \left(a,D(a),D(a),t\right)\in \tau^{\partial}W(T)\right\}.\tag{$*$}
\end{equation}
By Corollary \ref{maint2}, we have:
\begin{equation}
\pi_E^{-1}(a,D(a))(K(a,D(a)))\neq \emptyset.\tag{$**$}
\end{equation}
Using $(*)$ and $(**)$ (for $T=K(a,D(a))$), we can find a tuple $b'$ such that:
$$(a,D(a),D(a),b')\in \tau^{\partial}W(K(a,D(a))).$$
By Lemma \ref{easylemma2}, $\bar{D}$ has a $B$-prolongation of the form
$$\bar{D}':K[a,D(a)]\to K[a,D(a),D(a),b'].$$
Using Lemma \ref{vergen}(1), we obtain that $\bar{D}'$ (hence also $\bar{D}$) has a $B$-regular realization on the field $K(a,D(a))$.
\\
$(2)\Rightarrow (3)$ and $(3)\Rightarrow (4)$. Obvious.
\\
$(4)\Rightarrow (1)$.
Let $\bar{D}'$ be a $B$-prolongation of $\bar{D}$. Then, we have:
$$(a,D(a),D(a),D'(D(a)))\in E(\Omega).$$
Hence we get:
$$(a,D(a))\in \im\left(\pi_E:E(L)\to W(L)\right).$$
Since $(a,D(a))$ is a generic point of $W$ over $K$, the morphism $\pi_E:E\to W$ is dominant.
\end{proof}
We formulate now our geometric axioms (see Remark \ref{partialv} for the definition of the map $\partial_V$).
\smallskip
\\
\textbf{Axioms for $B-\dcf$}
\\
The structure $(K,\partial)$ is a $B$-field such that for each pair $(V,W)$ of $K$-irreducible varieties, IF
\begin{itemize}
\item $W\subseteq \tau^{\partial}(V)$,

\item $W$ projects generically on $V$,

\item $E$ projects generically on $W$;
\end{itemize}
THEN there is $x\in V(K)$ such that $\partial_V(x)\in W(K)$.
\begin{remark}
\begin{enumerate}
\item The assumptions in the axioms above imply that $\tau^{\partial}(V)$ can not be the ``empty scheme'' (see Remark \ref{empty}).

\item It is standard to notice that the conditions above form a first-order scheme of axioms. It is explained in detail (for a very similar situation) e.g. in \cite[Remark 2.7(1)]{HK3}.
\end{enumerate}
\end{remark}

\begin{theorem}\label{mainthm}
Suppose that the structure $(K,\partial)$ is a $B$-field. Then the following are equivalent.
\begin{enumerate}
\item $(K,\partial)$ is existentially closed.

\item $(K,\partial)\models B-\dcf$.
\end{enumerate}
\end{theorem}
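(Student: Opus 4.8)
The plan is to pass back and forth between the geometric data in the axiom scheme for $\bdcf$ and the language of $B$-kernels and $B$-regular realizations from Section~\ref{secoppro}, letting the Kernel-Prolongation Lemma (Proposition~\ref{kerprol}) carry the load. For the implication $(1)\Rightarrow(2)$, suppose $(K,\partial)$ is existentially closed and let $(V,W)$ be a pair of $K$-irreducible varieties satisfying the three hypotheses of the scheme: $W\subseteq\tau^{\partial}(V)$, the projection $W\to V$ is dominant, and $\pi_E\colon E\to W$ is dominant. Choose a generic point $(a,a')$ of $W$ over $K$ in $\Omega$; dominance of $W\to V$ makes $a$ a generic point of $V$ over $K$, and $W\subseteq\tau^{\partial}(V)$ together with Lemma~\ref{easylemma2} yields a $B$-kernel $\bar D\colon K[a]\to K[a,a']$ with $D(a)=a'$. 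Dominance of $\pi_E$ is exactly condition~(1) of Proposition~\ref{kerprol}, so $\bar D$ admits a $B$-regular realization $(L,\partial')$; then $L\supseteq K(a,a')$, the $B$-operator $\partial'$ extends $\partial$, and since $\partial'$ extends $\bar D$ the tuple of operator images of $a$ under $\partial'$ is $a'$, so $a$ witnesses in $(L,\partial')$ the quantifier-free formula over $K$ asserting that $x$ lies on $V$ and that the $\partial$-prolongation of $x$ lies on $W$. By existential closedness this formula has a witness $x\in V(K)$ in $(K,\partial)$, i.e.\ $x\in V(K)$ with $\partial_V(x)\in W(K)$, as required.

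For $(2)\Rightarrow(1)$, suppose $(K,\partial)\models\bdcf$, let $(L,\partial')$ be any $B$-field extension of $(K,\partial)$, and let $\varphi$ be a quantifier-free formula over $K$ with a solution $\alpha$ in $L$; we must solve $\varphi$ in $K$. I would first preprocess $\varphi$ in the usual way: enlarge the solution tuple by all the finitely many iterated operator images of it that occur in $\varphi$, so that $\varphi$ only mentions a tuple $\alpha$ and its first image $D'(\alpha)$; and adjoin inverses of the polynomials appearing inside negated atomic subformulas, so that the condition becomes a finite system of polynomial equations. Writing $a$ for the resulting tuple over $K$ in $\Omega$ and setting $V=\locus_K(a)$, $W=\locus_K(a,D'(a))$, it suffices to produce $x\in V(K)$ with $\partial_V(x)\in W(K)$: the ideal of $W$ contains all equations of the preprocessed system, and since passing to loci forces precisely the consistency relations among the blocks of $(a,D'(a))$ — in particular identifying the repeated $D'(\alpha)$-block with the corresponding block of the prolongation — such an $x$ gives back a genuine solution of $\varphi$ in $K$. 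Now $K[a]\subseteq K[a,D'(a)]$ carries the $B$-kernel $\bar D:=\partial'|_{K[a]}$, so $W\subseteq\tau^{\partial}(V)$ by Lemma~\ref{easylemma2}; the projection $W\to V$ is dominant because $a$ is generic in $V$; and $(L,\partial')$ is a $B$-regular realization of $\bar D$ (as $K(a,D'(a))\subseteq L$ and $\partial'$ extends $\bar D$), so Proposition~\ref{kerprol} gives that $\pi_E\colon E\to W$ is dominant. Hence $(V,W)$ satisfies the hypotheses of the scheme and the axioms deliver $x\in V(K)$.

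The genuinely analytic content is already in hand — lifting a $B$-operator along the function field of a kernel (Corollary~\ref{maint2}, hence Proposition~\ref{kerprol}) and the equivalence of the geometric dominance condition with prolongability — so the proof of Theorem~\ref{mainthm} is mostly a matter of correctly aligning the three hypotheses of the axiom scheme with the input and output of Proposition~\ref{kerprol}. The step that needs real care is the preprocessing in $(2)\Rightarrow(1)$: one must check that absorbing iterated operators into the tuple and clearing denominators preserves $K$-irreducibility of the loci and dominance of $W\to V$, and, above all, that a $K$-rational point of $V$ whose $\partial$-prolongation lands on $W$ actually solves $\varphi$ — which works exactly because forming loci builds the needed consistency equations into $W$.
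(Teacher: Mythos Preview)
Your proposal is correct and follows essentially the same route as the paper's proof: in both directions you translate between the geometric data $(V,W,E)$ and $B$-kernels via Lemma~\ref{easylemma2}, then invoke Proposition~\ref{kerprol} to move between the dominance condition on $\pi_E$ and the existence of a $B$-regular realization. The only difference is that you unpack the preprocessing in $(2)\Rightarrow(1)$ (absorbing iterated operators, clearing inequations, and checking that a rational point of $W$ really solves $\varphi$) where the paper simply writes ``by the usual tricks'' and passes directly to $\chi(x,\partial(x))$; your concern about $K$-irreducibility of the loci is unnecessary, however, since $\locus_K$ of a point is $K$-irreducible by definition.
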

\begin{proof}
After all the preparatory results above, the proof closely follows the lines of the usual proofs in this context. More precisely, we follow the proofs of \cite[Theorem 2.1]{K2} and \cite[Theorem 2.17]{BK}.
\\
$(1)\Rightarrow (2)$. Let $(V,W)$ be a pair of varieties satisfying the assumptions of the axioms for the theory $B-\dcf$. Since the projection map $W\to V$ is dominant, there are $a,a'$ (tuples in $\Omega$) such that
$$V=\locus_K(a),\ \ \ W=\locus_K(a,a').$$
Since $W\subseteq \tau^{\partial}(V)$, by Lemma \ref{easylemma2} we can assume that $a'=D(a)$ for a $B$-kernel $\bar{D}:K[a]\to K[a,D(a)]$. By Prop. \ref{kerprol}, there is a $B$-regular realization $(L,D')$ of $\bar{D}$. Clearly, $a\in V(L)$ and $(a,D(a))\in W(L)$, so the $B$-field $(L,D')$ satisfies the conclusion of the axioms for $B-\dcf$. Since the $B$-field $(K,D)$ is existentially closed, it satisfies this conclusion as well.
\\
$(2)\Rightarrow (1)$. Let $\varphi(x)$ be a quantifier-free formula over $K$ in the language of $B$-fields. By the usual tricks, we can assume that the theory of $B$-fields implies the following:
$$\varphi(x)\ \ \ \Leftrightarrow\ \ \ \chi(x,\partial(x))$$
for a quantifier-free formula $\chi(x,y)$ in the language of fields. Assume that there is a $B$-field extension $(K,\partial)\subseteq (L,\partial')$ and a tuple $d$ from $L$ such that
$$(L,\partial')\models \chi(d,\partial'(d)).$$
We define:
$$V=\locus_K(d),\ \ \ W=\locus_K(d,\partial'(d)).$$
By Lemma \ref{easylemma2} and Prop. \ref{kerprol}, the pair $(V,W)$ satisfies the assumptions of axioms for $B-\dcf$. Hence there is $a\in V(K)$ such that $\partial_V(a)\in W(K)$. Therefore we have
$$(K,\partial)\models \exists x\ \chi(x,\partial(x)),$$
which is exactly what we wanted to show.
\end{proof}
In the next general result we do not assume that $B$ is local.
\begin{cor}\label{supermain}
The theory of fields with $B$-operators has a model companion if and only if the following two conditions are satisfied:
\begin{enumerate}
\item the nilradical of $B$ coincides with the kernel of the Frobenius homomorphisms on $B$;

\item $B$ is either local or
$$B\cong_{\ka} \ka_1\times \ldots \times \ka_n,$$
where each $\ka\subseteq \ka_i$ is a finite separable field extension.
\end{enumerate}
\end{cor}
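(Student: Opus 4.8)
The only-if direction is precisely Theorem~\ref{nomc}, so the content is the converse: assuming $(1)$ and $(2)$, I would produce a model companion, splitting on the dichotomy in condition~$(2)$.

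\emph{Case $B$ local.} By Lemma~\ref{schcond}$(2)$, the conjunction of condition~$(1)$ with locality of $B$ is exactly Assumption~\ref{ass2}. Under that assumption Theorem~\ref{mainthm} says that the models of the first-order axiom scheme $\bdcf$ are exactly the existentially closed $B$-fields. The class of $B$-fields is elementary and closed under unions of chains (the requirement that $\partial$ be a $\ka$-algebra homomorphism into $R\otimes_\ka B$ is universal), hence inductive, so existentially closed $B$-fields exist and $\bdcf$ is consistent; being a first-order theory whose class of models coincides with the class of existentially closed models of the inductive theory of $B$-fields, $\bdcf$ is model-complete and is the model companion of that theory (by the standard characterization of model companions; this recovers, for $B=\ka[X]/(X^2)$, the classical $\dcf_p$).

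\emph{Case $B\cong_\ka\ka_1\times\cdots\times\ka_n$ with each $\ka\subseteq\ka_i$ finite separable.} If $n=1$ then $B$ is a field, hence local, and we are in the previous case; so assume $n\geqslant 2$. Now $B$ is reduced, so condition~$(1)$ is automatic ($\nil(B)=0=\ker(\fr_B)$), and the geometric machinery does not apply (Assumption~\ref{ass2} fails). Instead, the plan is to reduce to the known existence of the model companion $\mathrm{ACFA}_{p,d}$ of the theory of characteristic-$p$ fields equipped with $d=e-1$ free (not necessarily commuting) endomorphisms (cf.\ Example~\ref{ex}$(1)$). Since $\pi_B$ factors through one projection, we may take $\ka_1=\ka$ with $\pi_B$ the first projection; over any field $K\supseteq\ka^{\sep}$ the separability of the $\ka_i$ gives a natural-in-$K$-algebras isomorphism $B\otimes_\ka(-)\cong(-)^{e}$, under which a $B$-operator on such a $K$ becomes a $d$-tuple of $\ka^{\sep}$-algebra endomorphisms. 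Combining this with (a) the fact that every $B$-field embeds in a $B$-field which is a $\ka^{\sep}$-algebra (for reduced $B$ a $B$-operator is a tuple of homomorphisms into étale $K$-algebras, and these extend along the separable algebraic extension $K\subseteq K\cdot\ka^{\sep}$ inside $\Omega$) and (b) the observation from the proof of Theorem~\ref{nomc} identifying $B$-field extensions of a $B_L$-field with $B_L$-field extensions, one transfers existential closedness between the two settings and concludes that the theory of $B$-fields is companionable, its model companion being (identifiable with) $\mathrm{ACFA}_{p,d}$.

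The local case is immediate once Theorem~\ref{mainthm} is in hand; the work, and the main obstacle, is the reduced non-local case. There one must make the reduction to $\mathrm{ACFA}_{p,d}$ genuinely precise: checking that the splitting of $B\otimes_\ka(-)$ over $\ka^{\sep}$ is functorial and matches $B$-operators with endomorphism tuples on the nose, accounting for $B$-fields that are not $\ka^{\sep}$-algebras and for the residual profinite data of $\gal(\ka^{\sep}/\ka)$, and invoking the nontrivial existence of the model companion of fields with several free endomorphisms.
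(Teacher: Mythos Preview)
Your proposal is correct and follows essentially the same route as the paper: Theorem~\ref{nomc} for the forward direction, Theorem~\ref{mainthm} (via Lemma~\ref{schcond}(2)) in the local case, and a reduction to $\mathrm{ACFA}_{p,d}$ in the separable-product case. The only difference is cosmetic: the paper passes to $\ka^{\alg}$ rather than $\ka^{\sep}$, observes directly that a $B$-operator extends along any algebraic extension (so every existentially closed $B$-field already contains $\ka^{\alg}$), and then asserts inter-definability with a field carrying $d$ endomorphisms without dwelling on the Galois bookkeeping you flag at the end.
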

\begin{proof}
Theorem \ref{nomc} says exactly that if the theory of fields with $B$-operators has a model companion, then the two conditions above are satisfied.

For the opposite implication, assume that these two conditions are satisfied. If $B$ is local, then we are done by Theorem \ref{mainthm}. Hence, we can assume that
$$B\cong_{\ka} \ka_1\times \ldots \times \ka_n,$$
where each $\ka\subseteq \ka_i$ is a finite separable field extension. Let $(K,\partial)$ be a $B$-field, and $K\subseteq K'$ be an algebraic field extension.  It is easy to see that there is a $B$-field extension $(K,\partial)\subseteq (K',\partial')$. Therefore, each existentially closed $B$-field contains $\ka^{\alg}$. Since each extension $\ka\subseteq \ka_i$ is finite and separable, we get that
$$B\otimes_{\ka}\ka^{\alg}\cong_{\ka^{\alg}}\left(\ka^{\alg}\right)^{\times e}.$$
Thus each existentially closed $B$-field $(K,\partial)$ is inter-definable with $K$ together with $d=e-1$ automorphisms (see Example \ref{ex}(1)). It is well-known that the theory of fields with $d$ endomorphisms has a model companion (see e.g. \cite{acfa1} for the case of $d=1$, and \cite[Section 2]{KiPi} for the case of $d>1$) called, in our positive characteristic case, ACFA$_{p,d}$. Hence the theory of $B$-fields has a model companion which is inter-definable with the theory ACFA$_{p,d}$ together with the axioms of $\ka$-algebras and $\ka$-algebra endomorphisms.
\end{proof}

\section{Model-theoretic properties of $\bdcf$}\label{secstable}
As in the previous section, we still use the fixed $\ka,B,\pi_B,e$ from the beginning of Section \ref{secoppro}. We assume that the theory $\bdcf$ exists, that is $B$ satisfies the conditions from items $(1)$ and $(2)$ in Corollary \ref{supermain}. We deal briefly with the non-local case (when the theory $\bdcf$ is inter-definable with the theory ACFA$_{p,d}$) in Section \ref{secacfapd}. Afterwards, we assume in this section that $B$ is local. To show that the theory $\bdcf$ is stable in this case, one should follow the proof of Theorem 2.4 from \cite{K2} and check whether all the corresponding differential algebraic facts also hold in the context of $B$-operators. We still assume that all the fields we consider are subfields of a big algebraically closed field $\Omega$.

\subsection{Linear disjointness}\label{secwro}
In this subsection, we show that for $B$ satisfying Assumption \ref{ass2}, strict $B$-fields are ``$B$-perfect'' (see Theorem \ref{strict}), which will be enough to get the amalgamation property and quantifier elimination in an appropriate language (see Section \ref{secqe}).

Let $\ka\subseteq K$ be a field extension and for any $K$-vector space $W$, we denote
$$W_B:=W\otimes_{\ka}B\cong_K W\otimes_{K}K_B$$
(an isomorphism of $K$-vector spaces). Assume now that $\partial:K\to K_B$ is a $B$-operator and $M$ is a $K_B$-module. We denote by $\partial^*(M)$ the $K$-vector space structure on $M$ which is the restriction of scalars along $\partial:K\to K_B$, i.e. for $\alpha\in K$ and $m\in\partial^*(M)$, we have:
$$\alpha\cdot m:=\partial(\alpha)m.$$
\begin{definition}
Let $W$ be a $K$-vector space and $V$ be a $K$-vector subspace of $W$.
\begin{enumerate}
\item A \emph{$\partial$-operator from $V$ to $W$} is a map $D:V\to W_B$, which is \emph{$\partial$-linear}, i.e. it is a $K$-linear map
$$D:V\to \partial^*(W_B).$$

\item For $D$ as above, we define the \emph{space of constants of $D$} as:
$$V^D:=\{v\in V\ |\ D(v)=v\otimes 1_B\}.$$
\end{enumerate}
\end{definition}
\begin{remark}
It is easy to see that $\partial:K\to K_B$ is a (unique) $\partial$-operator from $K$ to $K$. Then $K^{\partial}$ is a subfield of $K$, and for any $\partial$-operator $D:V\to W$, $V^D$ is a vector space over $K^{\partial}$.
\end{remark}
We need a lemma about exterior powers of $\partial$-operators.
\begin{lemma}\label{extpow}
Let $D:V\to W_B$ be a $\partial$-operator and $n\in \Nn$. Then there is a natural $\partial$-operator
$$\Lambda_D^{n}:\Lambda_K^{n}(V)\to \left(\Lambda_K^{n}(W)\right)_B$$
such that for any $v_1,\ldots,v_n\in V^D$, we have:
$$v_1\wedge \ldots \wedge v_n\in \left(\Lambda_K^{n}(V)\right)^{\Lambda_D^{n}}.$$
\end{lemma}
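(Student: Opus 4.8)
The plan is to construct $\Lambda_D^n$ as the $n$-th exterior power of $D$ viewed as a $K$-linear map into a suitable module, and then check the constant-space claim. First I would recall that $D\colon V\to\partial^*(W_B)$ is $K$-linear by definition, and observe that $\partial^*(W_B)$ carries not just a $K$-vector space structure but is a module over the $K$-algebra $K_B$ via $\partial$; more precisely $W_B = W\otimes_K K_B$, and the exterior algebra construction over the base ring $K_B$ makes sense. The key functorial input is the following: for any commutative ring $S$, any $S$-algebra $S'$, and any $S$-module map $D\colon V\to W\otimes_S S'$ between (the $S$-module obtained by restriction from) an $S'$-module $W\otimes_S S'$, applying $\Lambda^n$ over $S$ to $D$ and composing with the natural map $\Lambda^n_S(W\otimes_S S')\to \Lambda^n_{S'}(W\otimes_S S') = \Lambda^n_S(W)\otimes_S S'$ yields an $S$-linear map $\Lambda^n_S(V)\to \Lambda^n_S(W)\otimes_S S'$. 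Applying this with $S=K$, $S'=K_B$ (so $S'$-module structure twisted by $\partial$), and using the canonical isomorphism $\bigl(\Lambda^n_K(W)\bigr)_B \cong \Lambda^n_K(W)\otimes_K K_B$ gives the desired $\partial$-operator $\Lambda_D^n$. Naturality in $V$ (and $W$) follows from naturality of each step.

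The second step is to verify the constant-space statement. Take $v_1,\dots,v_n\in V^D$, meaning $D(v_i) = v_i\otimes 1_B$ in $W_B$. I would compute $\Lambda_D^n(v_1\wedge\cdots\wedge v_n)$ directly from the construction: the $n$-th exterior power of a linear map sends $v_1\wedge\cdots\wedge v_n$ to $D(v_1)\wedge\cdots\wedge D(v_n)$, computed in $\Lambda^n_{K_B}(W_B)$ after the identification above. Substituting $D(v_i)=v_i\otimes 1_B$, this wedge in $\Lambda^n_{K_B}(W\otimes_K K_B)$ equals $(v_1\wedge\cdots\wedge v_n)\otimes 1_B$ under $\Lambda^n_K(W)\otimes_K K_B$. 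Hence $\Lambda_D^n(v_1\wedge\cdots\wedge v_n) = (v_1\wedge\cdots\wedge v_n)\otimes 1_B$, which is exactly the condition for membership in $\bigl(\Lambda^n_K(V)\bigr)^{\Lambda_D^n}$.

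The main obstacle I anticipate is bookkeeping with the two ring structures on $W_B$: the intrinsic $K$-structure coming from $W$, and the twisted $K$-structure $\partial^*$ along which $D$ is linear. One must be careful that the exterior power is taken over the right ring at each stage: $\Lambda^n_K$ of the domain $V$ (an honest $K$-vector space), but the target naturally lives in an exterior power over $K_B$, and only after identifying $\Lambda^n_{K_B}(W\otimes_K K_B)$ with $\Lambda^n_K(W)\otimes_K K_B$ does one get something of the form $\bigl(\Lambda^n_K(W)\bigr)_B$. The $\partial$-linearity of $\Lambda_D^n$ — i.e. that it is $K$-linear into $\partial^*$ of the target — should then be automatic because restriction of scalars commutes with the relevant base changes, but this is precisely the point to state carefully. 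No genuinely hard mathematics is involved; the content is that exterior powers are functorial and compatible with base change, applied to the twisted-linear map $D$.
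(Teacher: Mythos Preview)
Your construction is exactly the paper's: take $\Lambda_K^n(D)\colon \Lambda_K^n(V)\to \Lambda_K^n(W_B)$ and compose with the natural map $\Lambda_K^n(W_B)\to \Lambda_{K_B}^n(W_B)\cong (\Lambda_K^n(W))_B$ coming from the universal property and base change for exterior powers. You additionally spell out the constant-space verification and the bookkeeping with the two $K$-structures on $W_B$, both of which the paper leaves implicit; your treatment is correct and a bit more careful than the original.
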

\begin{proof}
From the universal property of the exterior product (and the fact that the exterior product functor commutes with the extension of scalars), we get the following map:
$$\Lambda_K^{n}(W_B)\to \Lambda_{K_B}^{n}(W_B)\cong \left(\Lambda_K^{n}(W)\right)_B.$$
Composing the map above with the following exterior power map:
$$\Lambda_K^{n}(D):\Lambda_K^{n}(V)\to \Lambda_K^{n}(W_B),$$
we get our $\partial$-operator $\Lambda_D^{n}:\Lambda_K^{n}(V)\to \left(\Lambda_K^{n}(W)\right)_B$.
\end{proof}
We denote $K^{\partial}$ by $C$ and prove our linear disjointness theorem.
\begin{theorem}\label{lidi}
Let $D:V\to W_B$ be a $\partial$-operator. Then the natural map
$$K\otimes_CV^D\to V$$
(coming from the scalar multiplication) is injective.
\end{theorem}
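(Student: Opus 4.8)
The plan is to mimic the classical proof that elements of a vector space linearly independent over the constants of a derivation remain independent over the full field, adapted to the setting of $\partial$-operators and using the exterior power machinery from Lemma~\ref{extpow}. Suppose toward a contradiction that the natural map $K\otimes_C V^D\to V$ is not injective. Pick among all witnessing relations one of minimal length: elements $v_1,\ldots,v_n\in V^D$ that are linearly independent over $C$ but become linearly dependent over $K$, with $n$ minimal. Minimality forces a relation $v_n=\sum_{i=1}^{n-1}\alpha_i v_i$ with $\alpha_i\in K$, and by minimality the $v_1,\ldots,v_{n-1}$ are themselves $K$-linearly independent (any shorter $K$-dependence among them would, together with their $C$-independence, contradict minimality of $n$), so in particular each coefficient $\alpha_i$ is uniquely determined.

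First I would apply $D$ to the relation $v_n=\sum_i\alpha_i v_i$. Since $D$ is $\partial$-linear and each $v_j\in V^D$ satisfies $D(v_j)=v_j\otimes 1_B$, we obtain
\begin{equation*}
v_n\otimes 1_B=D(v_n)=\sum_{i=1}^{n-1}\partial(\alpha_i)\,(v_i\otimes 1_B)=\sum_{i=1}^{n-1}\partial(\alpha_i)\cdot(v_i\otimes 1_B),
\end{equation*}
where the products are taken in the $K_B$-module $W_B$. On the other hand, applying $\otimes 1_B$ to the original relation gives $v_n\otimes 1_B=\sum_i(\alpha_i\otimes 1_B)(v_i\otimes 1_B)$ inside $W_B$. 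Comparing the two expressions, and using that $v_1\otimes 1_B,\ldots,v_{n-1}\otimes 1_B$ are $K_B$-linearly independent in $W_B$ (this is where I need the $K$-linear independence of the $v_i$ together with flatness/freeness of $B$ over $\ka$, so that $W\otimes_\ka B$ detects linear independence coordinatewise in the basis $b_0,\ldots,b_d$), I conclude $\partial(\alpha_i)=\alpha_i\otimes 1_B$ for each $i$, i.e. $\alpha_i\in K^\partial=C$. But then $v_n=\sum\alpha_i v_i$ is a $C$-linear relation among $v_1,\ldots,v_n$, contradicting their $C$-independence.

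The main obstacle is the step asserting that $v_1\otimes 1_B,\ldots,v_{n-1}\otimes 1_B$ remain linearly independent over $K_B$, and more generally justifying that one may "compare coefficients" cleanly in $W_B=W\otimes_\ka B$. The cleanest route is to invoke Lemma~\ref{extpow}: form the exterior power $\Lambda_D^{n-1}$ and note that $v_1\wedge\cdots\wedge v_{n-1}$ lies in $(\Lambda^{n-1}_K V)^{\Lambda^{n-1}_D}$; if the $v_i$ were $K$-dependent this wedge would be zero, and one can use the wedge to extract each $\alpha_i$ as a ratio of constants-valued expressions. Concretely, wedging the relation $v_n=\sum\alpha_iv_i$ with $v_1\wedge\cdots\wedge\widehat{v_j}\wedge\cdots\wedge v_{n-1}$ (omitting $v_j$) isolates $\alpha_j\cdot(v_1\wedge\cdots\wedge v_{n-1})$; applying $\Lambda_D^{n-1}$ and using that the wedge of constants is a constant then shows $\partial(\alpha_j)$ acts on a nonzero constant vector the same way $\alpha_j\otimes 1_B$ does, forcing $\alpha_j\in C$. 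I would structure the write-up around this wedge argument rather than a bare "compare coordinates" argument, since it packages the linear-algebra bookkeeping and is exactly why Lemma~\ref{extpow} was proved; the minimality reduction is then only needed to guarantee $v_1\wedge\cdots\wedge v_{n-1}\neq 0$.
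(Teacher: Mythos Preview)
Your proposal is correct and follows essentially the same strategy as the paper: take a minimal counterexample, pass to an exterior power via Lemma~\ref{extpow}, and reduce to a one-dimensional computation showing a coefficient lies in $C$. The paper organizes this slightly differently---it first proves the case $\dim_K V=1$ outright, then applies $\Lambda^n$ (with $n+1$ vectors spanning an $n$-dimensional space) to produce two constant wedges $a,b$ in a one-dimensional space, extracts a single $\alpha\in C$ with $b=\alpha a$, and invokes minimality once more---whereas you apply $\Lambda^{n-1}$ to isolate each $\alpha_j$ individually. These are minor rearrangements of the same idea.

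One remark worth making: the ``direct coefficient comparison'' argument you sketch first and then set aside is in fact entirely sound and does not require Lemma~\ref{extpow} at all. Since $B$ is free over $\ka$, one has $W_B\cong W\otimes_K K_B$, so a $K$-basis of $W$ becomes a $K_B$-basis of $W_B$; hence $K$-independence of $v_1,\dots,v_{n-1}$ in $W$ immediately gives $K_B$-independence of $v_1\otimes 1_B,\dots,v_{n-1}\otimes 1_B$ in $W_B$, and your equation forces $\partial(\alpha_i)=\alpha_i\otimes 1_B$. The paper's one-dimensional base case silently uses the same freeness fact (to cancel $v\otimes 1_B$), so the exterior-power machinery is packaging rather than essential. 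If you wanted the shortest possible write-up, your first approach already suffices.
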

\begin{proof}
Let us assume first that $\dim_K(V)=1$. If the result does not hold, then there are $u,v\in V^D$, which are $C$-linearly
independent. Since $\dim_K(V)=1$, there is invertible $\alpha\in K$ such that $u=\alpha v\in V^D\setminus \{0\}$. Applying
$D$, we get:
$$\alpha v=D(\alpha v)=\partial(\alpha)D(v)=\partial(\alpha)v,$$
so $\alpha=\partial(\alpha)$, a contradiction.

Let us take now $V$ arbitrary, and a minimal counterexample to our result, i.e. for some minimal $n>0$, there are $v_0,\dots,v_n\in V^D$, which are linearly independent over $C$ but linearly dependent over $K$. Since we have proved the result for $\dim_K(V)=1$, we get that $n>1$. Without loss of generality, we can assume that $V$ coincides with the $K$-linear span of $v_0,\dots,v_n$. By our minimality assumption, $\dim_K(V)=n$. By Lemma \ref{extpow}, we get an appropriate $\partial$-operator
$$\Lambda_D^{n}:\Lambda_K^{n}(V)\to \left(\Lambda_K^{n}(W)\right)_B.$$
Then we have $\dim_K(\Lambda_K^{n}(V))=1$ and
$$a:=v_0\wedge v_1\wedge \ldots \wedge v_{n-2} \wedge v_{n-1}\in \Lambda_K^{n}(V)^{\Lambda_D^{n}},\ \ b:=v_0\wedge v_1\wedge \ldots \wedge v_{n-2} \wedge v_{n}\in \Lambda_K^{n}(V)^{\Lambda_D^{n}}.$$
By the truth of the result in the one-dimensional case, there is (without loss of generality) a non-zero $\alpha\in C$ such that $b=\alpha a$. Hence we get that:
$$v_0\wedge v_1\wedge \ldots \wedge v_{n-2}\wedge (v_{n}-\alpha v_{n-1})=0.$$
Therefore we obtain the folowing:
\begin{itemize}
\item $v_0,v_1,\ldots, v_{n-2}, v_{n}-\alpha v_{n-1}$ are $K$-linearly dependent;

\item $v_0,v_1,\ldots, v_{n-2}, v_{n}-\alpha v_{n-1}\in V^D.$
\end{itemize}
By the minimality of $n$, the above two items imply that $v_0,v_1,\ldots, v_{n-2}, v_{n}-\alpha v_{n-1}$ are $K$-linearly dependent. Therefore, $v_0,\dots,v_n$ are linearly dependent over $C$, which is a contradiction.
\end{proof}
\begin{cor}\label{lindis}
Suppose that $(K,\partial)\subseteq (L,\partial')$ is an extension of $B$-fields. Then $K$ is linearly disjoint from $L^{\partial'}$ over $K^{\partial}$ (inside $L$).
\end{cor}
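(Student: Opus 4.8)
The plan is to deduce the corollary directly from Theorem~\ref{lidi}. Recall that ``$K$ is linearly disjoint from $L^{\partial'}$ over $C:=K^{\partial}$ inside $L$'' means that the natural multiplication map $K\otimes_{C}L^{\partial'}\to L$ is injective; since injectivity of a map of modules can be checked on finitely generated submodules (each such submodule of $K\otimes_{C}L^{\partial'}$ being a submodule of $K\otimes_{C}L_{0}$ for a finite-dimensional $C$-subspace $L_{0}\subseteq L^{\partial'}$, and $L_{0}\hookrightarrow L^{\partial'}$ being split over the field $C$), it is enough to prove that any finite tuple $v_{0},\ldots,v_{n}$ of elements of $L^{\partial'}$ which is linearly independent over $C$ stays linearly independent over $K$. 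Note first that $C=K\cap L^{\partial'}$: for $v\in K$ we have $\partial'(v)=\partial(v)$ because $\partial'$ extends $\partial$, so $v\in L^{\partial'}$ iff $v\in K^{\partial}$; in particular $K$ and $L^{\partial'}$ are both fields containing $C$, and the statement makes sense.

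So fix $v_{0},\ldots,v_{n}\in L^{\partial'}$ that are $C$-linearly independent, let $V\subseteq L$ be their $K$-linear span, and put $W:=L$ regarded as a $K$-vector space, with $V$ a $K$-subspace. I claim that $D:=\partial'|_{V}\colon V\to L\otimes_{\ka}B=W_{B}$ is a $\partial$-operator from $V$ to $W$ in the sense introduced earlier in this subsection, i.e.\ that it is $\partial$-linear. Indeed, $\partial'$ is a $\ka$-algebra homomorphism extending $\partial$, so for $\alpha\in K$ and $v\in V$ we get $\partial'(\alpha v)=\partial'(\alpha)\,\partial'(v)=\partial(\alpha)\,\partial'(v)$, which is exactly $K$-linearity of $D$ as a map into $\partial^{*}(W_{B})$. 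Furthermore each $v_{i}$ lies in $V^{D}=V\cap L^{\partial'}$.

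Now Theorem~\ref{lidi} applies to $D\colon V\to W_{B}$ and tells us that the natural map $K\otimes_{C}V^{D}\to V$ (scalar multiplication) is injective. Since $V^{D}$ is a vector space over the field $C$, the $K$-module $K\otimes_{C}V^{D}$ is free with a basis coming from any $C$-basis of $V^{D}$; hence the $C$-linearly independent elements $v_{0},\ldots,v_{n}\in V^{D}$ give rise to $K$-linearly independent elements $1\otimes v_{0},\ldots,1\otimes v_{n}$ of $K\otimes_{C}V^{D}$, and by injectivity their images $v_{0},\ldots,v_{n}$ are $K$-linearly independent in $V\subseteq L$. This is what we wanted, and it completes the argument.

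I do not anticipate a real obstacle here: all the substance is already packaged into Theorem~\ref{lidi}. The two points needing (minor) care are the verification that the restriction $\partial'|_{V}$ really is a $\partial$-operator of $K$-vector spaces rather than merely an additive map, and the standard reduction of linear disjointness of the fields $K$ and $L^{\partial'}$ over $C$ to the finitely generated situation that the theorem handles.
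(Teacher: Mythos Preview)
Your proof is correct and follows the same approach as the paper, namely a direct application of Theorem~\ref{lidi}; the only difference is that the paper applies the theorem in one step with $V=W=L$ and $D=\partial'$ (so that $V^{D}=L^{\partial'}$ and the injectivity of $K\otimes_{C}L^{\partial'}\to L$ is immediate), whereas you insert an unnecessary reduction to finitely many elements before invoking the theorem.
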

\begin{proof}
It follows directly from Theorem \ref{lidi} by taking $V=W=L$ and $D=\partial'$.
\end{proof}
\begin{remark}
We comment here on Theorem \ref{lidi} and Corollary \ref{lindis}.
\begin{enumerate}
  \item We discuss several special cases.
\begin{enumerate}
  \item If $B=\ka[X]/(X^2)$, then we get the well-known result (see e.g. \cite[Theorem 3.7]{kapint} and the paragraph after its proof) saying that if $K\subseteq M$ is a differential field extension, then the constants of $M$ are linearly disjoint from $K$ over the constants of $K$. It is classically being proved using the Wro\'{n}skian method. Our proof avoids almost any computations, but we can see some kind of a ``shade'' of the Wro\'{n}skian determinant there, since inside the $n$-th exterior power of an $n$-th dimensional space, the elementary wedge products are exactly the determinants of the corresponding matrices (after choosing a basis).

      We also get similar independence results for $B=\ka[X]/(X^e)$ for any $e>1$, i.e. for \emph{higher iterative derivations} also known as  (non-iterative!) \emph{Hasse-Schmidt derivations}. We do not know if such independence results were already known.
  \item If $B=\ka\times \ka$, then $B$-operators are the same as endomorphisms, and the constants in our sense coincide with the constants of endomorphisms. The corresponding linear independence result is probably well-known, and it is also very easy to show.

  \item If we take fiber products over $\ka$ of the $\ka$-algebras $B$ from the two items above, then the corresponding operators are finite sequences of endomorphisms and (higher) derivations. As far as we know, the corresponding linear disjointness result is known only in the case of several \emph{commuting} derivations, and it is usually proved using generalized Wro\'{n}skians.
\end{enumerate}

  \item Our Theorem \ref{lidi} is surprisingly general. Actually, the $\ka$-algebra $B$ there is totally arbitrary (e.g. it need not be finite dimensional), we do not need the $\ka$-algebra map $\pi_B:B\to \ka$ and the characteristic of $\ka$ is arbitrary.
\end{enumerate}
\end{remark}
Using the field of constants $K^{\partial}$, Lemma \ref{frl} can be translated as follows.
\begin{lemma}\label{frl2}
Suppose $B$ satisfies Assumption \ref{ass2} and $(K,\partial)$ is a field with $B$-operator. Then we have $K^p\subseteq K^{\partial}$.
\end{lemma}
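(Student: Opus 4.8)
The plan is to derive Lemma~\ref{frl2} as an immediate translation of Lemma~\ref{frl} into the language of constants. Recall that under Assumption~\ref{ass2} the algebra $B$ is local, so by Remark~\ref{firstrem} we take $b_0=1_B$, and for a field $K$ with $B$-operator $\partial$ we have $\partial_0=\id_K$ and $K^{\partial}=\{r\in K\mid \partial(r)=r\otimes 1_B\}$ by Definition~\ref{bopdef}(3). So the statement $K^p\subseteq K^{\partial}$ unwinds to: for every $r\in K$, $\partial(r^p)=r^p\otimes 1_B$.

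First I would apply Lemma~\ref{frl} with $R=T=K$ and the given $B$-operator $\partial:K\to K\otimes_{\ka}B$ (which is legitimate since Assumption~\ref{ass2} is exactly the hypothesis $\fr_B(\ker(\pi_B))=0$ of that lemma). Lemma~\ref{frl} gives, for any $r\in K$,
$$\partial(r^p)=\partial_0(r^p)\otimes 1_B.$$
Since $\partial_0=\id_K$ here, the right-hand side is $r^p\otimes 1_B$, which is precisely the condition for $r^p$ to lie in $K^{\partial}$. As $r$ was arbitrary, $K^p\subseteq K^{\partial}$, and since $K^{\partial}$ is a subfield of $K$ (noted just after Definition~\ref{bopdef}), there is nothing further to check.

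There is essentially no obstacle: the content is entirely carried by Lemma~\ref{frl}, and the only thing to be careful about is that we are in the local case so that $\partial_0$ really is the identity and $K^{\partial}$ really is defined by the displayed equation with $b_0=1_B$ — both of which are guaranteed by Assumption~\ref{ass2} via Lemma~\ref{schcond}(2) together with Remark~\ref{firstrem}. Thus the proof is a one-line application of the earlier lemma once the definitions are matched up.
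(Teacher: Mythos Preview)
Your proof is correct and is exactly the approach taken in the paper, which simply presents Lemma~\ref{frl2} as a translation of Lemma~\ref{frl} into the language of constants.
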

We need one more definition generalizing the classical one (from the differential case).
\begin{definition}
Suppose that $B$ satisfies Assumption \ref{ass2} and $(K,\partial)$ is a $B$-field. Then $(K,\partial)$ is \emph{strict}, if $K^{\partial}=K^p$.
\end{definition}
We can prove now the main result of this subsection.
\begin{theorem}\label{strict}
Suppose that $B$ satisfies Assumption \ref{ass2}. Then strict $B$-fields are ``$B$-perfect'', i.e. if $(K,\partial)\subseteq (L,\partial')$ is a $B$-field extension and the $B$-field $(K,\partial)$ is strict, then the field extension $K\subseteq L$ is separable.
\end{theorem}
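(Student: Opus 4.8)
The plan is to translate the separability of $K\subseteq L$ into a linear disjointness statement over $K^p$ and then feed in Corollary~\ref{lindis} together with strictness.

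\textbf{Step 1: the classical criterion.} I would first recall the MacLane--Vandiver characterization of separability in characteristic $p$: for a field extension $K\subseteq L$ with $\ch(K)=p$, the extension $K\subseteq L$ is separable if and only if $K^{1/p}$ and $L$ are linearly disjoint over $K$, equivalently $L\otimes_K K^{1/p}$ is reduced (see e.g. \cite{mat}). Via the Frobenius isomorphism $x\mapsto x^p$ one identifies $K^{1/p}\otimes_K L$ with $K\otimes_{K^p}L^p$ as rings, so this is in turn equivalent to saying that the multiplication map $K\otimes_{K^p}L^p\to L$ is injective, i.e. that $K$ and $L^p$ are linearly disjoint over $K^p$ inside $L$.

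\textbf{Step 2: the constants are large enough.} Since $B$ satisfies Assumption~\ref{ass2}, Lemma~\ref{frl2} applied to the $B$-field $(L,\partial')$ gives $L^p\subseteq L^{\partial'}$. Also $K^p\subseteq K^{\partial}\subseteq L^{\partial'}$, and by strictness of $(K,\partial)$ we have in fact $K^{\partial}=K^p$. Hence $L^p$ is a $K^p$-linear subspace of $L^{\partial'}$, with $K^p$ serving as the common base.

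\textbf{Step 3: conclude.} Apply Corollary~\ref{lindis} to the extension of $B$-fields $(K,\partial)\subseteq(L,\partial')$: it yields that $K$ is linearly disjoint from $L^{\partial'}$ over $K^{\partial}=K^p$ inside $L$. Linear disjointness is inherited by $K^p$-subspaces of the second factor, so $K$ is linearly disjoint from $L^p$ over $K^p$. By Step~1 this is exactly the statement that $K\subseteq L$ is separable, as desired.

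The argument is short once Corollary~\ref{lindis}, Lemma~\ref{frl2}, and strictness are in hand; the only place needing care is the bookkeeping in Step~1 --- getting the side of the linear disjointness right (it is $K$ against $L^p$ with common base $K^p$, precisely what Corollary~\ref{lindis} outputs after substituting $K^{\partial}=K^p$), verifying the Frobenius identification $K^{1/p}\otimes_K L\cong K\otimes_{K^p}L^p$, and noting that restricting from $L^{\partial'}$ to its $K^p$-subspace $L^p$ preserves linear disjointness. Everything else is formal.
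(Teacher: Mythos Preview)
Your proof is correct and follows essentially the same route as the paper's: apply Corollary~\ref{lindis} to get $K$ linearly disjoint from $L^{\partial'}$ over $K^{\partial}=K^p$, use Lemma~\ref{frl2} to pass down to the subspace $L^p$, and invoke the MacLane criterion for separability. The paper compresses this into two sentences, but the content is identical.
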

\begin{proof}
By Corollary \ref{lindis}, $K$ is linearly disjoint from $L^{\partial'}$ over $K^{\partial}=K^p$. Since $L^p\subseteq L^{\partial'}$ (by Lemma \ref{frl2}), we get that $K$ is linearly disjoint from $L^p$ over $K^p$, i.e. the field extension $K\subseteq L$ is separable.
\end{proof}

\subsection{Several endomorphisms}\label{secacfapd}
We assume in this subsection that $B$ satisfies the conditions from items $(1)$ and $(2)$ in Corollary \ref{supermain} and that $B$ is not local. By the proof of Corollary \ref{supermain}, the theory $\bdcf$ is inter-definable with the theory ACFA$_{p,d}$ (the model companion of the theory of fields of characteristic $p$ with $d$ endomorphisms) together with the axioms of $\ka$-algebras and $\ka$-algebra endomorphisms. It is possible that this theory has been already described in the literature, but we could not find any reference, hence we will provide short arguments below.

Let $\mathcal{L}$ be the language of fields expanded by the constants given by elements of $\ka$, $T=\mathrm{Th}(\mathrm{Fields})\cup \mathrm{Diag}(\ka)$ (the models of $T$ are exactly the field extensions of $\ka$), and $G=F_d$ be the free group on $d$ (free) generators. Then we are in the set-up of \cite{Hoff3}, and we know (by Corollary \ref{supermain}) that the theory $T^{\mathrm{mc}}_G$ (the model companion of the theory of models of $T$ with actions of $G$ by $\mathcal{L}$-automorphisms) exists and coincides with the theory $\bdcf$. Therefore, \cite[Theorem 4.22]{Hoff3} (see also \cite[Remark 4.20(1)]{Hoff3}) implies that the theory $\bdcf$ is simple and the forking relation there is the obvious one, i.e. $a$ is independent from $b$ over $c$ if and only if $G\cdot a$ (the orbit of $a$ under the action of $G=F_k$) is $\mathrm{ACF}_p$-independent from $G\cdot b$ over $G\cdot c$. Then, the standard arguments give the elimination of imaginaries for $\bdcf$, for example one can repeat the argument from the proof of \cite[Theorem 5.12]{MS2}.

\subsection{Amalgamation Property and Quantifier Elimination}\label{secqe}
From now on till the end of Section \ref{secstable}, we will be working under Assumption \ref{ass2}. To show the amalgamation property for the theory $\bdcf$, we can follow the lines of the proof of \cite[Fact 1.10]{K2}.

We introduce here several languages which we will use:
\begin{itemize}
  \item $L$ is the language of rings;
  \item $L^B$ is the language of rings with $d=e-1$ extra unary function symbols and extra constant symbols for the elements of $\ka$ (the language used in Section \ref{secaxioms});
\item $L_{\lambda_0}$ is the language of rings with an extra unary function symbol $\lambda_0$ (for the $p$-th root function);
  \item $L_{\lambda_0}^B:=L_{\lambda_0}\cup L^B$.
\end{itemize}
Note that any field $K$ of characteristic $p$ naturally becomes an $L_{\lambda_0}$-structure, where $\lambda_0$ is understood as:
$$\lambda_0:K\to K,\ \ \ \lambda_0(x)= \begin{cases} x^{1/p}\text{ for }x\in K^p, \\ 0\ \ \ \ \text{ for }x\notin K^p. \end{cases}$$
We will often use the obvious result saying that extensions of fields $K\subseteq M$ of characteristic $p$ preserving the $\lambda_0$-function (i.e. the  $L_{\lambda_0}$-extensions) are exactly the field extensions such that $M^p\cap K=K^p$ (see e.g. \cite[Lemma 4.1]{HK}).
\begin{prop}\label{ap}
The theory $\bdcf$ has Amalgamation Property in the language $L_{\lambda_0}^B$.
\end{prop}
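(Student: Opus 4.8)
The plan is to prove the Amalgamation Property for $\bdcf$ in the language $L_{\lambda_0}^B$ by reducing it to an amalgamation statement about $B$-fields, and then invoking the axioms of $\bdcf$ (via Theorem \ref{mainthm}) to re-embed the amalgam into a model of $\bdcf$. Concretely, suppose $(K,\partial)\hookrightarrow(K_1,\partial_1)$ and $(K,\partial)\hookrightarrow(K_2,\partial_2)$ are $L_{\lambda_0}^B$-embeddings of models of $\bdcf$. We may assume everything lives inside the monster $\Omega$ and that the two copies of $K$ are identified. The first step is to produce a common $B$-field extension of $(K_1,\partial_1)$ and $(K_2,\partial_2)$ amalgamating them over $(K,\partial)$; the second step is to embed that extension into a model of $\bdcf$, which exists because $\bdcf$ is the model companion. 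Since embeddings of models of a model companion into models of the same theory are elementary, the composite gives the required amalgamating diagram of $\bdcf$-models, and the point to check is that the maps are $L_{\lambda_0}^B$-embeddings, i.e. they preserve $\lambda_0$ as well.

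For the first step I would proceed as follows. Because $(K,\partial)$ is a model of $\bdcf$, it is in particular strict: indeed, existentially closed $B$-fields have $K^\partial=K^p$ (this is exactly the strictness that feeds Theorem \ref{strict}; it follows because one can always adjoin a $p$-th root of a constant inside a $B$-field extension using Lemma \ref{pextend}). By Theorem \ref{strict}, $K\subseteq K_1$ and $K\subseteq K_2$ are then \emph{separable} field extensions. Now pick a field compositum: since $K_1$ is separable over $K$, the tensor product $K_1\otimes_K K_2$ has a minimal prime whose residue domain $R$ embeds both $K_1$ and $K_2$ over $K$ and is a domain (separability guarantees the relevant reducedness/irreducibility after possibly enlarging, via the standard linear-disjointness trick — one may first free-amalgamate the transcendence bases and then take a compositum of the algebraic parts). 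By Proposition \ref{tensor}, there is a unique $B$-operator on $K_1\otimes_K K_2$ restricting to $\partial_1$ and $\partial_2$; this descends to $R$ and then, by Lemma \ref{vergen}(1), extends uniquely to a $B$-operator on the fraction field $L$ of $R$. Thus $(L,\partial_L)$ is a common $B$-field extension of $(K_i,\partial_i)$ over $(K,\partial)$.

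The third step is to check the $\lambda_0$-condition, which is where care is needed. An embedding of fields of characteristic $p$ is an $L_{\lambda_0}$-embedding exactly when it is $p$-th-root-preserving, equivalently when $L^p\cap K_i=K_i^p$ for $i=1,2$. This does \emph{not} come for free from separability of $K_i/K$ alone; rather, I would argue that the compositum $L$ can be chosen so that $K_i$ is relatively separably(/$p$-)closed in $L$, or more directly that $L^p\cap K_i=K_i^p$. The cleanest route: since $K_1/K$ and $K_2/K$ are separable, $K_1$ and $K_2$ are linearly disjoint from $K^{1/p^\infty}$ over $K$ after the compositum is formed appropriately, which forces $L^p\cap K_i=K_i^p$; alternatively one verifies directly on the tensor product that $(K_1\otimes_K K_2)^{1/p}\cap K_i=K_i^{1/p}$ using a $p$-basis of $K$ over $K^p$ together with separability. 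Once $L$ is an $L_{\lambda_0}^B$-extension of both $(K_i,\partial_i)$, embed $(L,\partial_L)$ into a model $(N,\partial_N)\models\bdcf$ (possible since $\bdcf$ is model-complete and every $B$-field embeds in an existentially closed one), noting that an $L$-extension of $L_{\lambda_0}$-structures into a perfect-enough or existentially closed target is automatically $\lambda_0$-preserving because in $N\models\bdcf$ the underlying field is separably closed (so $\lambda_0$ is total and agrees with the genuine $p$-th-root map). The embeddings $(K_i,\partial_i)\hookrightarrow(N,\partial_N)$ are then the desired amalgamating $L_{\lambda_0}^B$-embeddings of models of $\bdcf$.

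I expect the main obstacle to be precisely the bookkeeping around $\lambda_0$: ensuring that the compositum $L$ is formed so that no new $p$-th roots of elements of $K_1$ or $K_2$ are created, i.e. that $L^p\cap K_i=K_i^p$. Separability of $K_i/K$ is the essential input (supplied by strictness of $(K,\partial)$ together with Theorem \ref{strict}), but one still has to choose the prime of $K_1\otimes_K K_2$ and possibly shrink/enlarge so that both relative-separability conditions hold simultaneously; this is the standard but slightly delicate "linearly disjoint compositum of separable extensions" argument, and it is the only place where the characteristic-$p$ subtleties bite. Everything else — existence and uniqueness of the amalgamated $B$-operator (Proposition \ref{tensor}), extension to fraction fields (Lemma \ref{vergen}(1)), and re-embedding into a model of the model companion — is formal.
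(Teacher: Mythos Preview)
Your overall architecture is right, but there are two genuine gaps and one misplaced worry.

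\textbf{The base need not be a model.} You assume $(K,\partial)$ is itself a model of $\bdcf$ and use this to conclude strictness. For the application to quantifier elimination (Theorem~\ref{qe}), amalgamation must hold over an arbitrary $L_{\lambda_0}^B$-substructure $\mathbf{K}$, not just over models. The paper handles this correctly: strictness of $\mathbf{K}_1$ (an existentially closed $B$-field, via Lemma~\ref{gencon}(2)) together with the $L_{\lambda_0}$-extension hypothesis $K_1^p\cap K=K^p$ forces strictness of $\mathbf{K}$. Indeed, if $a\in K^{\partial}$ then $a\in K_1^{\partial^1}=K_1^p$, hence $a\in K^p$. Only then does Theorem~\ref{strict} give separability of $K\subseteq K_1,K_2$.

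\textbf{The $B$-operator need not descend to a quotient by a minimal prime.} You put the tensor-product $B$-operator from Proposition~\ref{tensor} on $K_1\otimes_K K_2$ and then ``descend'' it to the quotient $R$ by a chosen minimal prime. There is no reason for that prime to be a $\partial$-ideal, so this step fails in general. The paper sidesteps the issue: after arranging algebraic disjointness of $K_1,K_2$ over $K$, it replaces all three fields by their separable closures inside $\Omega$ (legitimate by Lemma~\ref{vergen}(2), since separable algebraic extensions are \'etale). Then $K$ is separably closed, so $K\subseteq K_i$ is regular; regularity plus algebraic disjointness gives linear disjointness, and $K_1\otimes_K K_2$ is already a domain. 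No quotient is needed.

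\textbf{The $\lambda_0$-bookkeeping is not the obstacle.} Once $K_1,K_2$ are strict, any $B$-field embedding $K_i\hookrightarrow L$ is automatically $\lambda_0$-preserving: if $a\in L^p\cap K_i$ then $a\in L^{\partial_L}$ by Lemma~\ref{frl2}, hence $a\in K_i^{\partial_i}=K_i^p$. So the compositum step and the final embedding into a model of $\bdcf$ require no extra care on this point; the real work is exactly the two issues above.
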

\begin{proof}
We follow closely the proof from \cite{K2} (which was in turn based on a proof from \cite{Zieg2}). Let us take $\mathbf{K}_1=(K,\partial^1),\mathbf{K}_2=(K,\partial^2)$ which are models of $\bdcf$ and a common $L^B_{\lambda_0}$-substructure
$$\mathbf{K}=(K,\partial)\subseteq \mathbf{K}_1,\mathbf{K}_2.$$
Then $K$ is a domain and using Lemma \ref{vergen}(1), we can assume that $K$ is a field. By Lemma \ref{gencon}(2), the $B$-fields $\mathbf{K}_1,\mathbf{K}_2$ are strict (being existentially closed). Hence it is easy to see that the $B$-field $\mathbf{K}$ is strict: if $a\in \mathbf{K}^{\partial}$, then $a\in \mathbf{K}_1^{\partial^1}=K_1^p$, so $a\in K^p$, since $K\subseteq K_1$ is an $L_{\lambda_0}$-extension. By Theorem \ref{strict}, the field extensions $K\subseteq K_1,K_2$ are separable.

We can assume that $K_1$ is algebraically disjoint from $K_2$ over $K$ (inside $\Omega$). Using Lemma \ref{vergen}(2) (since separable algebraic field extensions are \'{e}tale), we can assume that $K$ is separably closed (by replacing all the fields $K,K_1,K_2$ with their separable closures in $\Omega$). Then the extensions $K\subseteq K_1,K\subseteq K_2$ are regular (see \cite[Lemma 2.6.4]{FrJa}). By \cite[Lemma 2.6.7]{FrJa}, $K_1$ is linearly disjoint from $K_2$ over $K$. Therefore the tensor product $K_1\otimes_KK_2$ is a domain. By Prop. \ref{tensor}, there is a $B$-operator on $K_1\otimes_KK_2$ extending the $B$-operators on $K_1$ and $K_2$. By Lemma \ref{vergen}(1), the $B$-operator on $K_1\otimes_KK_2$ uniquely extends to the field of fractions, which gives the amalgamation of $\mathbf{K}_1$ and $\mathbf{K}_2$ over $\mathbf{K}$ we were looking for.
\end{proof}
We can conclude now our quantifier elimination result.
\begin{theorem}\label{qe}
The theory $\bdcf$ has Quantifier Elimination in the language $L_{\lambda_0}^B$.
\end{theorem}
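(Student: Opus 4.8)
The plan is to derive quantifier elimination from the Amalgamation Property (Proposition \ref{ap}) via the standard model-theoretic test for QE: a theory eliminates quantifiers in a language $\mathcal{L}$ if and only if, whenever $\mathbf{M}_1, \mathbf{M}_2$ are models with a common $\mathcal{L}$-substructure $\mathbf{A}$, every existential $\mathcal{L}(\mathbf{A})$-sentence true in $\mathbf{M}_1$ is true in $\mathbf{M}_2$ (equivalently, after passing to a sufficiently saturated $\mathbf{M}_2$, any $\mathcal{L}$-embedding of $\mathbf{A}$ into $\mathbf{M}_2$ extends to an $\mathcal{L}$-embedding of $\mathbf{M}_1$ into $\mathbf{M}_2$). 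So the first step is to set up this criterion: take $\mathbf{K}_1 \models \bdcf$, a $\lambda_0$-saturated $\mathbf{K}_2 \models \bdcf$, and a common $L_{\lambda_0}^B$-substructure $\mathbf{A}$, and aim to embed $\mathbf{K}_1$ into $\mathbf{K}_2$ over $\mathbf{A}$.

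Next I would reduce $\mathbf{A}$ to a field. As in the proof of Proposition \ref{ap}, an $L_{\lambda_0}^B$-substructure $\mathbf{A} = (A,\partial^A)$ is a domain with a $B$-operator, and by Lemma \ref{vergen}(1) the $B$-operator extends uniquely to the fraction field, which embeds (as an $L_{\lambda_0}^B$-structure, since the fraction field is an $L_{\lambda_0}$-extension of $A$ inside each $\mathbf{K}_i$) into both $\mathbf{K}_1$ and $\mathbf{K}_2$; so we may assume $\mathbf{A} = (K,\partial)$ is a $B$-field whose underlying field is a field and which is a common $L_{\lambda_0}^B$-substructure. The key point inherited from the AP proof is that $\mathbf{A}$ is then \emph{strict}: since $\mathbf{K}_1$ is existentially closed it is strict by Lemma \ref{gencon}(2), and because $K \subseteq K_1$ is an $L_{\lambda_0}$-extension, $K^{\partial} \subseteq K_1^{\partial^1} \cap K = K_1^p \cap K = K^p$, so $K^{\partial} = K^p$. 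By Theorem \ref{strict} the extensions $K \subseteq K_1$ and $K \subseteq K_2$ are then separable.

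Now the amalgamation argument of Proposition \ref{ap} applies essentially verbatim, but with one factor being the model $\mathbf{K}_1$ (which we want to embed) rather than a second model we are free to modify: inside a monster $\Omega$ containing $\mathbf{K}_2$, I would place a copy of $\mathbf{K}_1$ algebraically disjoint from $K_2$ over $K$; passing to separable closures (legitimate, using Lemma \ref{vergen}(2), since separable algebraic extensions are étale and preserve the $L_{\lambda_0}$-structure), the extensions become regular, hence $K_1$ is linearly disjoint from $K_2$ over $K$, so $K_1 \otimes_K K_2$ is a domain; Proposition \ref{tensor} gives a $B$-operator on it extending those on $K_1$ and $K_2$, and Lemma \ref{vergen}(1) extends this to the fraction field $M$. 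Finally, since $\mathbf{K}_2 \models \bdcf$ is existentially closed (and $\lambda_0$-saturated, so that $M$ can be taken $L_{\lambda_0}^B$-embeddable into it — here one checks $M^p \cap K_2 = K_2^p$, which follows from strictness of $\mathbf{K}_2$ and separability, so the embedding respects $\lambda_0$), the $B$-field extension $\mathbf{K}_2 \subseteq \mathbf{M}$ factors through an $L_{\lambda_0}^B$-embedding of $\mathbf{M}$, hence of $\mathbf{K}_1$, into $\mathbf{K}_2$ over $\mathbf{A}$, which is what the QE criterion demands.

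The main obstacle I anticipate is the bookkeeping around the $\lambda_0$-function: one must verify at each step (reduction to fraction field, passage to separable closure, formation of the compositum $M$) that the relevant inclusions are genuine $L_{\lambda_0}$-extensions, i.e. satisfy the condition $M'^p \cap M = M^p$ characterising $L_{\lambda_0}$-extensions (as recalled from \cite[Lemma 4.1]{HK}), and in particular that the amalgam $M$ sits inside $\mathbf{K}_2$ compatibly with $\lambda_0$. This is exactly the role strictness plays, and it is the reason $\lambda_0$ must be in the language for QE to hold — without it, purely inseparable extensions would obstruct embeddings. All the genuinely algebraic content (linear disjointness, the tensor-product $B$-operator, uniqueness of extensions to fraction fields and étale extensions) is already packaged in the cited lemmas, so beyond this verification the proof is a routine application of the QE-via-AP machinery, as in \cite[Fact 1.10]{K2} and \cite{Zieg2}.
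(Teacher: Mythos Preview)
Your approach is correct but considerably more laborious than the paper's. The paper's proof is two lines: since $\bdcf$ is a model companion (Theorem \ref{mainthm}) it is model complete, and by Proposition \ref{ap} it has the Amalgamation Property over $L_{\lambda_0}^B$-substructures; the general fact that any model complete theory with AP admits quantifier elimination then finishes the argument. You, by contrast, invoke the embedding criterion for QE directly and then essentially re-run the amalgamation construction of Proposition \ref{ap} inside that criterion, followed by a saturation argument to pull the amalgam back into $\mathbf{K}_2$. This is exactly the content of the abstract implication ``model complete $+$ AP $\Rightarrow$ QE'' unpacked in this particular case, so nothing is gained over simply citing Proposition \ref{ap} and model completeness.

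Two minor remarks on your write-up. First, the passage to separable closures is unnecessary once you have the models $\mathbf{K}_1,\mathbf{K}_2$: both are already separably closed, and the common substructure $K$ can be replaced by its separable closure inside either model using Lemma \ref{vergen}(2), with uniqueness ensuring compatibility. Second, your worry about $\lambda_0$-bookkeeping at the final embedding step dissolves once you note that any $L^B$-embedding between \emph{strict} $B$-fields is automatically an $L_{\lambda_0}^B$-embedding: if $f(a)\in K_2^p=K_2^{\partial}$ then $\partial_i(f(a))=0$ for $i>0$, hence $\partial_i(a)=0$, hence $a\in K_1^{\partial}=K_1^p$. So the $\lambda_0$-compatibility is automatic between models, and only needs to be imposed on the base substructure---which is precisely why AP is stated for $L_{\lambda_0}^B$-substructures.
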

\begin{proof}
It is enough to notice that:
\begin{itemize}
\item since the theory $\bdcf$ is a model companion of the theory of fields with $B$-operators (Theorem \ref{mainthm}), $\bdcf$ is model complete;

\item the theory $\bdcf$ has the amalgamation property in the language $L^B_{\lambda_0}$ (Prop. \ref{ap}).
\end{itemize}
Since any model complete theory with the amalgamation property admits quantifier elimination, the result is proved.
\end{proof}

\subsection{Stability}

We fix a monster model $(\mathfrak{C},\partial)\models\bdcf$.
\begin{remark}
The field $\mathfrak{C}$ is separably closed and not perfect. More precisely, $\mathfrak{C}$ has an infinite imperfection degree.
\end{remark}
\begin{proof}
The fact that $\mathfrak{C}$ is separably closed follows from Lemma \ref{vergen}(2) (separable algebraic field extensions are \'{e}tale). To show that $\mathfrak{C}$ has an infinite imperfection degree, it is enough to show (using Proposition \ref{tensor}, Lemma \ref{vergen}(1), Lemma \ref{gencon}(2) and Corollary \ref{lindis}) that for each $n>0$, there is a $B$-field structure $\partial$ on the field of rational functions $\ka(X_1,\ldots,X_n)$ such that:
\begin{equation}
\left[\ka(X_1,\ldots,X_n):\ka(X_1,\ldots,X_n)^{\partial}\right]\geqslant n.\tag{$*$}
\end{equation}
It is classical for $B=\ka[X]/(X^2)$, i.e. the case of derivations: one takes a derivation $\partial$ such that $\partial(X_i)=X_{i+1}$ for $i<n$ and $\partial(X_n)=0$, and uses the Wro\'{n}skian method (see e.g. the proof of \cite[Theorem 2.5]{Wo1}). In the general case, there is a $\ka$-algebra epimorphism $B\to \ka[X]/(X^2)$ (obtained by dividing first by a power of the maximal ideal of $B$, and then killing enough generators of the maximal ideal of this quotient). Using this epimorphism, we can lift the above derivation on $\ka(X_1,\ldots,X_n)$ to a $B$-operator satisfying $(*)$ (this lifting is possible by the universal property of $K$-algebras of polynomials and Lemma \ref{vergen}(1)).
\end{proof}
We denote the forking independence in $\mathfrak{C}$, considered as a separably closed field, by $\ind^{\scf}$. Similarly for types, algebraic closure, definable closure and groups of automorphisms, e.g. we use the notation $\acl^{\scf}$. On the other hand, $\acl^{\bdcf}$ corresponds to the algebraic closure computed in the $B$-field $(\mathfrak{C},\partial)$.

The following result is about pure fields. It comes from \cite{Wo2} and it also appears as \cite[Fact 2.3(i)]{K2}.
\begin{lemma}\label{fact23}
Consider subfields $K$, $M$ and $M'$ of $\mathfrak{C}$ such that the extensions $K\subseteq M,M\subseteq \mathfrak{C}$ and $K\subseteq M',M'\subseteq \mathfrak{C}$ are separable.
If $M$ is $p$-disjoint from $M'$ over $K$ in $\mathfrak{C}$, then the extension $MM'\subseteq \mathfrak{C}$ is separable.
\end{lemma}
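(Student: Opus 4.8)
The plan is to reduce the statement about the compositum $MM'$ to a statement purely about linear disjointness, and then to apply the standard characterization of separability via linear disjointness from the $p$-th power field. Recall that, by the criterion used repeatedly in this section (see e.g. \cite[Lemma 4.1]{HK}), a field extension $N\subseteq\mathfrak{C}$ is separable precisely when $N$ is linearly disjoint from $\mathfrak{C}^p$ over $N^p$; equivalently, $\mathfrak{C}^p\cap N = N^p$, i.e.\ $N$ is ``$p$-closed'' in $\mathfrak{C}$ in the sense used in the definition of $p$-disjointness. Here $p$-disjointness of $M$ from $M'$ over $K$ means that $M$ is linearly disjoint from $M'{}^{1/p}K$ over $K$ (inside $\mathfrak{C}^{1/p}$), or, after applying Frobenius, that $M^p$ is linearly disjoint from $M'$ over $K^p$ inside $\mathfrak{C}$ — this is the form I would work with.

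First I would set up the diagram of fields: $K\subseteq M, M'\subseteq MM'\subseteq\mathfrak{C}$, and the $p$-th power fields $K^p\subseteq M^p, M'^p\subseteq (MM')^p = M^pM'^p\subseteq\mathfrak{C}^p$. The goal is to show $(MM')^p\cap\mathfrak{C}^p$-linear-disjointness, i.e.\ that $MM'$ is linearly disjoint from $\mathfrak{C}^p$ over $(MM')^p = M^pM'^p$. The key input is that $M\subseteq\mathfrak{C}$ and $M'\subseteq\mathfrak{C}$ are already separable, so $M$ is linearly disjoint from $\mathfrak{C}^p$ over $M^p$, and $M'$ is linearly disjoint from $\mathfrak{C}^p$ over $M'^p$. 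I would also use that $K\subseteq M$ and $K\subseteq M'$ are separable (so $M$ is linearly disjoint from $K^{1/p}$ over $K$, etc.), although the crucial extra hypothesis is the $p$-disjointness of $M$ from $M'$ over $K$, which provides the ``transversality'' needed to combine the two separately-good extensions into one.

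The main step is a tower/transitivity argument for linear disjointness. I would argue: $M'$ is linearly disjoint from $\mathfrak{C}^p$ over $M'^p$; combining with $p$-disjointness of $M$ from $M'$ over $K$ (which, after a Frobenius twist, says $M'$ is linearly disjoint from $M^pK^{1/p}\cdots$, phrased correctly), one deduces that $M'$ is linearly disjoint from $M^p\mathfrak{C}^p = M^p\cdot\mathfrak{C}^p$ over $M'^p$. Hence the compositum $M'M^p$ is linearly disjoint from $\mathfrak{C}^p$ over $M'^pM^p = (MM')^p$ on the ``$M'$ side''. Then, on the ``$M$ side'', since $M$ is linearly disjoint from $\mathfrak{C}^p$ over $M^p$ and in particular from $M'^p\mathfrak{C}^p$ over $M^p$ after using $p$-disjointness again, one pushes this up along $M^p \subseteq M'M^p$: the extension $M\cdot(M'M^p) = MM'$ is linearly disjoint from $\mathfrak{C}^p$ over $M^p\cdot(MM')^p = (MM')^p$. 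Concretely: linear disjointness of $MM'$ from $\mathfrak{C}^p$ over $(MM')^p$ follows by transitivity once we know (i) $M'M^p$ is linearly disjoint from $\mathfrak{C}^p$ over $(MM')^p$ and (ii) $MM'$ is linearly disjoint from $M'M^p\cdot\mathfrak{C}^p$ over $M'M^p$ — and (ii) reduces, via the isomorphism $MM' = M\otimes_{\cdots}\cdots$ type statements coming from linear disjointness of $M$ from $M'$ over $K$, to the separability of $M\subseteq\mathfrak{C}$.

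The step I expect to be the main obstacle is bookkeeping the Frobenius twists correctly: ``$p$-disjoint'' is a statement inside $\mathfrak{C}^{1/p}$ about $M$ and $M'^{1/p}$, and I need to transport everything consistently to statements inside $\mathfrak{C}$ about $p$-th powers, making sure at each use of transitivity that the base fields actually match up (e.g.\ that $M^p\cap(MM')^p$ issues do not cause a gap, which they do not since $(MM')^p = M^pM'^p$ by Frobenius being a ring homomorphism). Once the translation is fixed, each individual linear-disjointness transitivity is routine, using \cite[Lemma 2.6.7]{FrJa}-type facts and the separability criterion; there is no new idea beyond organizing the tower. I would present the argument by first reformulating both the hypothesis and the conclusion in terms of linear disjointness from $\mathfrak{C}^p$ over the appropriate $p$-th power subfield, then doing the two-sided transitivity as above.
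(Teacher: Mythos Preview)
The paper does not actually prove this lemma: it is stated as a known fact, with the sentence preceding it reading ``The following result is about pure fields. It comes from \cite{Wo2} and it also appears as \cite[Fact 2.3(i)]{K2}.'' So there is no in-paper proof to compare against; the authors simply quote the result from Wood's work on separably closed fields.

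Your direct argument via repeated transitivity of linear disjointness is in the right spirit and is essentially how such statements are verified in the sources cited, typically phrased in terms of $p$-bases rather than raw linear disjointness. Two points deserve care before you trust the sketch. First, your unpacking of ``$p$-disjoint from $M'$ over $K$ \emph{in $\mathfrak{C}$}'' as ``$M^p$ linearly disjoint from $M'$ over $K^p$'' drops the ambient field entirely; in the SCF setting the notion is disjointness in the $p$-independence pregeometry of $\mathfrak{C}$ (closure $A\mapsto\mathfrak{C}^p(A)$), and the ``in $\mathfrak{C}$'' is doing work --- this is also what makes the paper's remark ``linear disjointness implies $p$-independence'' in the proof of Lemma~\ref{lemma.P7} go through. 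Second, in your step~(ii) you pass from ``$M$ linearly disjoint from $\mathfrak{C}^p$ over $M^p$'' to ``$MM'$ linearly disjoint from $M'\mathfrak{C}^p$ over $M'M^p$'' by ``pushing up along $M^p\subseteq M'M^p$''; base-changing a linear-disjointness statement by an arbitrary extension is not automatic, and this is exactly where the $p$-disjointness hypothesis (correctly formulated, involving $\mathfrak{C}^p$) has to be invoked. Once you pin down the definition and feed it into that step, the tower argument does close; the cleanest bookkeeping is usually to choose a $p$-basis $B_K$ of $K$, extend it (via separability of $K\subseteq M$ and $K\subseteq M'$) to $p$-bases $B_K\cup B_M$ of $M$ and $B_K\cup B_{M'}$ of $M'$, and then use $p$-disjointness together with separability of $M,M'\subseteq\mathfrak{C}$ to see that $B_K\cup B_M\cup B_{M'}$ stays $p$-independent in $\mathfrak{C}$.
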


\begin{lemma}\label{acl}
For any small subset $A$ of $\mathfrak{C}$, it follows
$$\acl^{\bdcf}(A)=\acl^{\scf}(\langle A\rangle_B),$$
where $\langle A\rangle_B$ denotes the $B$-subfield of $\mathfrak{C}$ generated by $A$.
\end{lemma}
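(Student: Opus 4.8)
The plan is to prove the two inclusions separately. The inclusion $\acl^{\bdcf}(A)\supseteq\acl^{\scf}(\langle A\rangle_B)$ is the easy direction: any element algebraic over the field $\langle A\rangle_B$ in the sense of separably closed fields is in particular algebraic over $A$ in the $B$-field structure (a separable-field formula is an $L^B_{\lambda_0}$-formula once we note that the $\lambda_0$-function is definable in $\bdcf$ and that the $B$-operator only adds structure), and since $\langle A\rangle_B\subseteq\dcl^{\bdcf}(A)$ — the $B$-subfield generated by $A$ is built from $A$ using ring operations, the $\partial_i$'s, and the constants of $\ka$ — we get $\acl^{\scf}(\langle A\rangle_B)\subseteq\acl^{\bdcf}(\dcl^{\bdcf}(A))=\acl^{\bdcf}(A)$.

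For the reverse inclusion $\acl^{\bdcf}(A)\subseteq\acl^{\scf}(\langle A\rangle_B)$, I would set $K:=\langle A\rangle_B$ and argue that any element $c\in\mathfrak{C}$ with $c\notin\acl^{\scf}(K)$ has infinitely many conjugates over $A$ in $(\mathfrak{C},\partial)$, hence $c\notin\acl^{\bdcf}(A)$. The key point is to produce, for such a $c$, an $L^B_{\lambda_0}$-automorphism (or at least an elementary map, which suffices by model completeness and homogeneity of the monster) of $(\mathfrak{C},\partial)$ fixing $A$ pointwise and moving $c$. Since $c\notin\acl^{\scf}(K)$, inside the separably closed field $\mathfrak{C}$ there are infinitely many realizations $c=c_0,c_1,c_2,\ldots$ of $\tp^{\scf}(c/K)$. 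I would now want to find $B$-field structures witnessing that all the $c_i$ have the same type over $A$ in $\bdcf$. The natural route is: first extend $\partial$ from $K$ to the $B$-subfield $\langle K,c\rangle_B$ — but of course $\partial$ already acts on all of $\mathfrak{C}$, so instead I should transport structure. Concretely, for each $i$ pick a $\scf$-automorphism $\theta_i$ of $\mathfrak{C}$ fixing $K$ with $\theta_i(c)=c_i$; this need not preserve $\partial$, but $\theta_i$ pushes $\partial$ forward to a new $B$-operator $\partial^{(i)}$ on $\mathfrak{C}$ which agrees with $\partial$ on $K$ (since $\theta_i$ fixes $K$ and $\partial|_K$ is determined by $K$ as a $B$-subfield — here one uses that $K=\langle A\rangle_B$ is closed under the operators). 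Now $(\mathfrak{C},\partial)$ and $(\mathfrak{C},\partial^{(i)})$ are both models of $\bdcf$ sharing the common $L^B_{\lambda_0}$-substructure $(K,\partial|_K)$ — using Theorem \ref{strict} and Corollary \ref{lindis} to see $K$ carries a strict structure and the extensions in question are separable, exactly as in the proof of Proposition \ref{ap}. By quantifier elimination (Theorem \ref{qe}) and the amalgamation property (Proposition \ref{ap}), the $L^B_{\lambda_0}$-type of $c$ over $K$ in $(\mathfrak{C},\partial)$ equals the $L^B_{\lambda_0}$-type of $\theta_i(c)=c_i$ over $K$ in $(\mathfrak{C},\partial^{(i)})$, and by model completeness these types are the same as the full types; so all the $c_i$ realize $\tp^{\bdcf}(c/K)$, giving infinitely many conjugates and hence $c\notin\acl^{\bdcf}(A)\supseteq$ would fail — i.e.\ $c\notin\acl^{\bdcf}(A)$.

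The main obstacle I anticipate is making the "transport of structure" argument fully rigorous: one must check that the pushed-forward operator $\partial^{(i)}$ genuinely restricts to $\partial$ on $K$ (which is where $K=\langle A\rangle_B$ being a $B$-subfield, not just a subfield, is essential), and that the hypotheses of the amalgamation statement — separability of the relevant field extensions and strictness — are met; this is precisely the content carried over from the proof of Proposition \ref{ap}, invoking Theorem \ref{strict}, Corollary \ref{lindis}, Lemma \ref{frl2}, and Lemma \ref{fact23}. A secondary subtlety is the bookkeeping in the easy inclusion, namely verifying that $\langle A\rangle_B\subseteq\dcl^{\bdcf}(A)$ and that $\lambda_0$ is $\bdcf$-definable (it is: $\mathfrak{C}$ is separably closed, so $x\in\mathfrak{C}^p$ is definable and the $p$-th root is unique), so that $\acl^{\scf}$ computed over $\langle A\rangle_B$ is contained in $\acl^{\bdcf}$ computed over $A$. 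Once these are in place, the two inclusions combine to give the stated equality.
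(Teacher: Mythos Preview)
Your transport-of-structure idea is close in spirit to the paper's argument, but there is a genuine gap at the crucial step. After pushing $\partial$ forward along $\theta_i$ you correctly obtain that $c_i$ realizes $\tp^{\bdcf}(c/K)$ \emph{in the model $(\mathfrak{C},\partial^{(i)})$}; but this is a different $B$-structure on the same underlying set, and the quantifier-free $L^B_{\lambda_0}$-type of $c_i$ computed in $(\mathfrak{C},\partial)$ involves $\partial_j(c_i)$, not $\partial^{(i)}_j(c_i)=\theta_i(\partial_j(c))$. So you have produced realizations of the type across a family of distinct models, not infinitely many realizations in the fixed monster $(\mathfrak{C},\partial)$, and the conclusion $c\notin\acl^{\bdcf}(A)$ does not follow. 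Amalgamation does not close this gap as written: to embed $(\mathfrak{C},\partial^{(i)})$ into $(\mathfrak{C},\partial)$ over $K$ it would have to be small, and even granting that, the image of $c_i$ under such an embedding need not be $c_i$ itself, so distinctness of the resulting realizations is lost.

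There is also a secondary problem with amalgamating over $K=\langle A\rangle_B$: Proposition~\ref{ap} and Theorem~\ref{qe} require the base to be an $L^B_{\lambda_0}$-substructure, i.e.\ $K\subseteq\mathfrak{C}$ must be separable. This can fail --- for $A=\emptyset$ one has $K=\ka\subseteq\mathfrak{C}^{\partial}=\mathfrak{C}^p$, so if $\ka$ is imperfect then $\ka$ is not closed under $\lambda_0$ in $\mathfrak{C}$. The paper sidesteps both issues by working over $E=\acl^{\scf}(\langle A\rangle_B)$ and, crucially, passing to a \emph{small} elementary $B$-submodel $K\preccurlyeq(\mathfrak{C},\partial)$ containing $E$ together with the putatively finite $\bdcf$-orbit of $d$. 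One then chooses $f\in\aut^{\scf}(\mathfrak{C}/E)$ making $f(K)$ algebraically disjoint from $K$ over $E$, equips $f(K)\otimes_E K$ with a $B$-operator via Proposition~\ref{tensor} (regularity of $E\subseteq K$ ensures this is a domain), and embeds its fraction field into the monster \emph{over $K$}. This promotes $f$ to a genuine element of $\aut^{\bdcf}(\mathfrak{C}/E)$ with $f(d)\notin K$, contradicting finiteness of the orbit in one stroke.
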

\begin{proof}
The proof is standard, e.g. the proof of \cite[Lemma 4.10]{Hoff3} is a proof based on the same idea. We need to show that $E:=\acl^{\scf}(\langle A\rangle_B)$ is $B$-algebraically closed.
Assume not, and take $d\in\acl^{\bdcf}(E)\setminus E$. Let $(K,\partial)\preccurlyeq(\mathfrak{C},\partial)$ be such that $E\subseteq K$ and such that $K$ also contains the (finite) orbit of $d$ under the action of $\aut^{\bdcf}(\mathfrak{C}/E)$.

There is $f\in\aut^{\scf}(\mathfrak{C}/E)$ such that $f(K)$ is algebraically disjoint from $K$ over $E$. If $f(d)\in K$, then $d\in E$ (since $E$ is SCF-algebraically closed). Therefore $f(d)\not\in K$.

Let $\partial^f$ denote the $B$-operator on $f(K)$ such that for all $i\in \{0,1,\ldots,d\}$, we have:
$$\left(\partial^f\right)_i=f\circ \partial_i \circ f^{-1}.$$
By Prop. \ref{tensor}, there is a $B$-operator on $f(K)\otimes_E K$ extending $\partial$ on $K$ and $\partial^f$ on $f(K)$.
Similarly as in the proof of Prop. \ref{ap}, the field extension $E\subseteq K$ is regular, so $f(K)$ is linearly disjoint from $K$ over $E$ and  $f(K)\otimes_E K$ is a domain. By Lemma \ref{vergen}(1), the $B$-operator on $f(K)\otimes_E K$ extends uniquely to the field of fractions of $f(K)\otimes_E K$. This field of fractions above, can be embedded (as a $B$-field) over $K$ into $\mathfrak{C}$. Hence, we can assume that $f(K)$ is an elementary substructure of $(\mathfrak{C},\partial)$ and $f$ is a $B$-isomorphism. Therefore, $f$ extends to an element of $\aut^{\bdcf}(\mathfrak{C}/E)$. But then we get
$$f(d)\in \left(\aut^{\bdcf}(\mathfrak{C}/E)\cdot d\right)\setminus K,$$
which is a contradiction.
\end{proof}

We proceed now towards a description of the forking independence in the theory $\bdcf$.
We start with the following definition of a ternary relation on small subsets $A$, $B$, $C$ of the monster model $(\mathfrak{C},\partial)$:
$$A\ind^{\bdcf}_C B\qquad\iff\qquad\acl^{\bdcf}(A)\ind^{\scf}_{\acl^{\bdcf}(C)}\acl^{\bdcf}(B).$$
We will show that the ternary relation $\ind^{\bdcf}$ defined above satisfies the following properties of forking in stable theories.
\begin{enumerate}
\item[(P1)] (invariance) The relation $\ind^{\bdcf}$ is invariant under $\aut^{\bdcf}(\mathfrak{C})$.

\item[(P2)] (symmetry) For every small $A,B,C\subset\mathfrak{C}$, it follows that
$$A\ind^{\bdcf}_C B\qquad\iff\qquad B\ind^{\bdcf}_C A.$$

\item[(P3)] (monotonicity and transitivity) For all small $A\subseteq B\subseteq C\subset\mathfrak{C}$ and small $D\subset\mathfrak{C}$, it follows that
$$D\ind^{\bdcf}_A C\quad\iff\quad D\ind^{\bdcf}_A B\;\;\text{ and }\;\;D\ind^{\bdcf}_B C.$$

\item[(P4)] (existence) For every finite $a\subset\mathfrak{C}$ and every small $A\subseteq B\subset\mathfrak{C}$, there exists $f\in\aut^{\bdcf}(\mathfrak{C})$ such that $f(a)\ind^{\bdcf}_A B$.

\item[(P5)] (local character) For every finite $a\subset\mathfrak{C}$ and every small $B\subset\mathfrak{C}$, there is $B_0\subseteq B$ such that $|B_0|\leqslant\omega$ and $a\ind^{\bdcf}_{B_0}B$.

\item[(P6)] (finite character) For every small $A,B,C\subset\mathfrak{C}$, we have:
\\
 $A\ind^{\bdcf}_C B$ if and only if $a\ind^{\bdcf}_C B$ for every finite $a\subseteq A$.

\item[(P7)] (uniqueness over a model) Any complete type over a model is stationary.
\end{enumerate}
The properties (P1), (P2), (P3) and (P6) follow easily from the definition of $\ind^{\bdcf}$. The property (P5) follows from the local character and the finite character of $\ind^{\scf}$.
\begin{lemma}\label{lemma.P4}
The property (P4) (existence) holds.
\end{lemma}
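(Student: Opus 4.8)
The plan is to produce a realization $a'$ of $\tp^{\bdcf}(a/A)$ which, as a subfield, sits in ``$\scf$-general position'' over $\acl^{\bdcf}(B)$ relative to $\acl^{\bdcf}(A)$, and then to move $a$ to $a'$ by an element of $\aut^{\bdcf}(\mathfrak{C})$ fixing $A$, using homogeneity of the monster model. Throughout write $K:=\acl^{\bdcf}(A)$, $M:=\acl^{\bdcf}(B)$ and $L:=\acl^{\bdcf}(aA)$. By Lemma \ref{acl} these are $B$-subfields of $\mathfrak{C}$; since $p$-th roots are $0$-definable in a separably closed field, each of them is closed under $\lambda_0$ and relatively separably closed in $\mathfrak{C}$, and $K\subseteq L$, $K\subseteq M$. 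Being $\lambda_0$-closed subfields of the strict $B$-field $\mathfrak{C}$, they are themselves strict, so by Theorem \ref{strict} the extensions $K\subseteq L$ and $K\subseteq M$ are separable; as they are also relatively algebraically closed in $\mathfrak{C}$, they are in fact regular. The same applies to $K\subseteq\mathbf{M}$ for any small elementary substructure $\mathbf{M}\preccurlyeq(\mathfrak{C},\partial)$ with $B\subseteq\mathbf{M}$.

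Fix such an $\mathbf{M}$, so $K\subseteq\mathbf{M}$. Using the extension property of $\scf$-forking, I first choose $g\in\aut^{\scf}(\mathfrak{C}/K)$ with $g(L)\ind^{\scf}_K\mathbf{M}$, and transport the operator along $g$ by putting $\partial':=(g\circ\partial_i\circ g^{-1})_i$ on $g(L)$: since $g$ fixes the $B$-subfield $K$ pointwise, $(g(L),\partial')$ is a strict $B$-field, $B$-isomorphic to $(L,\partial|_L)$ over $K$. As $K\subseteq g(L)$ is regular, $g(L)$ is linearly disjoint from $\mathbf{M}$ over $K$ (as in the proof of Proposition \ref{ap}), so $g(L)\otimes_K\mathbf{M}$ is a domain; by Proposition \ref{tensor} it carries a $B$-operator extending $\partial'$ and $\partial|_{\mathbf{M}}$, which by Lemma \ref{vergen}(1) extends to a $B$-operator $\partial^{\mathcal{N}}$ on the field of fractions $\mathcal{N}$. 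Extending $(\mathcal{N},\partial^{\mathcal{N}})$ to an existentially closed $B$-field $(\widehat{\mathcal{N}},\widehat{\partial})\models\bdcf$, I then re-embed it into $(\mathfrak{C},\partial)$ over $\mathbf{M}$: because $\mathbf{M}$ is strict, Lemma \ref{frl2} shows $\widehat{\mathcal{N}}^p\cap\mathbf{M}=\mathbf{M}^p=\mathfrak{C}^p\cap\mathbf{M}$, so $\mathbf{M}$ is an $L^B_{\lambda_0}$-substructure of both $\bdcf$-models $\widehat{\mathcal{N}}$ and $\mathfrak{C}$, and quantifier elimination (Theorem \ref{qe}) upgrades $\mathbf{M}\preccurlyeq\mathfrak{C}$ to the language $L^B_{\lambda_0}$. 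Saturation of the monster model then provides an $L^B_{\lambda_0}$-embedding $j\colon\widehat{\mathcal{N}}\hookrightarrow\mathfrak{C}$ fixing $\mathbf{M}$ pointwise.

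It remains to read off $a'$ and to check independence. Composing, $h\colon L\xrightarrow{\,g\,}g(L)\hookrightarrow\widehat{\mathcal{N}}\xrightarrow{\,j\,}\mathfrak{C}$ is a $B$-field embedding fixing $K$ (hence $A$) pointwise; using strictness of $g(L)$ together with Lemma \ref{frl2} one checks $h$ creates no new $p$-th powers, so $h$ is even an $L^B_{\lambda_0}$-embedding. Set $a':=h(a)$. Then $a'$ has the same quantifier-free $L^B_{\lambda_0}$-type over $A$ as $a$, hence by Theorem \ref{qe} $a'\equiv^{\bdcf}_A a$, so by homogeneity of the monster there is $f\in\aut^{\bdcf}(\mathfrak{C})$ fixing $A$ with $f(a)=a'$. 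Finally, $j$ is $\bdcf$-elementary over $\mathbf{M}$ (Theorem \ref{qe}), hence elementary for the pure-field reduct, so it preserves $\scf$-forking; since the relation $g(L)\ind^{\scf}_K\mathbf{M}$ is geometric and therefore holds in $\widehat{\mathcal{N}}$ as well, applying $j$ and using $j|_{\mathbf{M}}=\id$ yields $h(L)\ind^{\scf}_K\mathbf{M}$, whence $\acl^{\bdcf}(a')\subseteq\acl^{\scf}(h(L))$ gives $\acl^{\bdcf}(a')\ind^{\scf}_K M$, that is $f(a)\ind^{\bdcf}_A B$.

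The step I expect to be the main obstacle is the control of $\lambda_0$: one has to know simultaneously that $\acl^{\bdcf}$ is closed under $p$-th roots (which makes $K,L,M$ strict) and that every field extension and embedding manufactured above is an $L_{\lambda_0}$-extension/embedding, so that quantifier elimination in $L^B_{\lambda_0}$ can legitimately be used both for the re-embedding $j$ and for transferring the type of $a'$ back over $A$. As in the proof of Proposition \ref{ap} (and Lemma \ref{acl}), this is exactly where strictness (Theorem \ref{strict}) and Lemma \ref{frl2} carry the argument; everything else combines Proposition \ref{tensor}, Lemma \ref{vergen}(1) and the extension property for $\scf$ in a routine way.
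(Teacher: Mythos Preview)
Your argument is correct, but substantially more involved than the paper's. The paper's proof is essentially three lines: choose a small $(K,\partial)\preccurlyeq(\mathfrak{C},\partial)$ containing $a$, $B$, and $E:=\acl^{\bdcf}(A)$; take $f\in\aut^{\scf}(\mathfrak{C}/E)$ with $f(K)$ algebraically disjoint from $K$ over $E$; and then quote the construction already carried out in the proof of Lemma~\ref{acl} to extend $f|_K$ to an element of $\aut^{\bdcf}(\mathfrak{C})$. Since $\acl^{\bdcf}(f(a)A)\subseteq f(K)$ and $\acl^{\bdcf}(B)\subseteq K$, the independence $f(a)\ind^{\bdcf}_A B$ drops out. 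You instead keep $L=\acl^{\bdcf}(aA)$ and a model $\mathbf{M}\supseteq B$ separate, build an abstract amalgam $\widehat{\mathcal{N}}$, and re-embed over $\mathbf{M}$; this unfolds by hand the same tensor-product-and-re-embed step the paper hides in its back-reference to Lemma~\ref{acl}, at greater length and with one more object to track. What your route buys is explicitness: every use of strictness, Theorem~\ref{strict}, and quantifier elimination is laid bare, whereas the paper relies on the reader having internalised the proof of Lemma~\ref{acl}.

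One phrase in your write-up does need tightening. Saying ``the relation $g(L)\ind^{\scf}_K\mathbf{M}$ is geometric and therefore holds in $\widehat{\mathcal{N}}$ as well'' is not a valid justification as written: $\scf$-forking is not intrinsic to the triple of fields, and the inclusion $\mathcal{N}\hookrightarrow\widehat{\mathcal{N}}$ is only a $B$-field embedding, not a priori $\scf$-elementary. What you actually have in $\widehat{\mathcal{N}}$ --- and what the field embedding $j$ then transports into $\mathfrak{C}$ --- is \emph{linear disjointness} of $g(L)$ and $\mathbf{M}$ over $K$, which is built into the tensor-product construction. From linear disjointness of $h(L)$ and $\mathbf{M}$ over $K$ inside $\mathfrak{C}$ you then recover $\ind^{\scf}$ exactly as the paper implicitly does (linear disjointness gives $p$-independence, and the separability of all the relevant extensions, which you have already verified, handles the rest via Lemma~\ref{fact23} and the standard description of forking in $\scf_{p,\infty}$). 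So the conclusion stands; only the word ``geometric'' should be replaced by this honest argument.
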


\begin{proof}
Let $a\subseteq\mathfrak{C}$ be finite and let $A\subseteq B$ be small subsets of $\mathfrak{C}$. Consider small $(K,\partial)\preccurlyeq(\mathfrak{C,\partial})$ containing $a$, $B$, and $E=\acl^{\bdcf}(A)$. There exists $f\in\aut^{\scf}(\mathfrak{C})$ such that $f(K)$ is algebraically disjoint from $K$ over $E$. Exactly as in the proof of Lemma \ref{acl}, we see that $f(K)$ is an elementary $B$-substructure of $\mathfrak{C}$ and that $f:K\to f(K)$ extends to an element of $\mathrm{Aut}^{\bdcf}(\mathfrak{C})$. Then we get (by the definition of $\ind^{\bdcf}$ and by Lemma \ref{acl}) that
$$f(a)\ind^{\bdcf}_A B,$$
which was our goal.
\end{proof}

\begin{lemma}\label{lemma.P7}
The property (P7) (uniqueness over a model) holds.
\end{lemma}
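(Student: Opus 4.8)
The plan is to reduce uniqueness of the nonforking (i.e.\ $\ind^{\bdcf}$-independent) extension to the homogeneity of $\mathfrak{C}$ and the quantifier elimination of Theorem~\ref{qe}, following \cite{K2}. Fix a small model $(K,\partial)\preccurlyeq(\mathfrak{C},\partial)$ of $\bdcf$ and a small $B\supseteq K$; put $\tilde B:=\acl^{\bdcf}(B)$. Existence of a nonforking extension being Lemma~\ref{lemma.P4}, it remains to prove: if $a,a'$ are finite tuples (enough, by (P6)) with $\tp^{\bdcf}(a/K)=\tp^{\bdcf}(a'/K)$ and $a\ind^{\bdcf}_KB$, $a'\ind^{\bdcf}_KB$, then $\tp^{\bdcf}(a/B)=\tp^{\bdcf}(a'/B)$. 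Writing $E_a:=\acl^{\bdcf}(Ka)$, $E_{a'}:=\acl^{\bdcf}(Ka')$, the hypotheses unwind (using $\acl^{\bdcf}(K)=K$) to $E_a\ind^{\scf}_K\tilde B$ and $E_{a'}\ind^{\scf}_K\tilde B$; by Lemma~\ref{acl} and its proof, $E_a$, $E_{a'}$, $\tilde B$ are $L^B_{\lambda_0}$-substructures of $\mathfrak{C}$. Since $\tp^{\bdcf}(a/K)=\tp^{\bdcf}(a'/K)$, there is $\tau\in\aut^{\bdcf}(\mathfrak{C}/K)$ with $\tau(a)=a'$, and $\sigma:=\tau|_{E_a}\colon E_a\to E_{a'}$ is an $L^B_{\lambda_0}$-isomorphism over $K$ with $\sigma(a)=a'$. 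It now suffices to extend $\sigma$ to an $L^B_{\lambda_0}$-isomorphism fixing $\tilde B$ pointwise: by Theorem~\ref{qe}, $a$ and $a'$ would then have the same $\bdcf$-type over $\tilde B$, hence over $B$.

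The extension will be built as in the proofs of Proposition~\ref{ap} and Lemma~\ref{acl}. As $K\models\bdcf$ it is existentially closed, hence strict (Lemma~\ref{gencon}(2)), so by Theorem~\ref{strict} the extensions $K\subseteq E_a,E_{a'},\tilde B$ are separable; and $E_a,E_{a'},\tilde B\subseteq\mathfrak{C}$ are separable too, being $\acl^{\scf}$-closed subfields of the separably closed field $\mathfrak{C}$ (of finite imperfection degree). From $E_a\ind^{\scf}_K\tilde B$ (so $E_a$ is algebraically disjoint from $\tilde B$ over $K$) and the regularity of $K\subseteq E_a,\tilde B$ ($K$ being separably closed, \cite[Lemma~2.6.4]{FrJa}), \cite[Lemma~2.6.7]{FrJa} gives that $E_a$ and $\tilde B$ are linearly disjoint over $K$, whence $E_a\otimes_K\tilde B$ is a domain; and, by Lemma~\ref{fact23}, the $B$-subfield $F_a:=\langle E_a\cup\tilde B\rangle_B\subseteq\mathfrak{C}$ generated by $E_a$ and $\tilde B$ is a separable extension of $\tilde B$, hence an $L^B_{\lambda_0}$-substructure of $\mathfrak{C}$. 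By Proposition~\ref{tensor} and Lemma~\ref{vergen}(1) the $B$-operators of $E_a$ and of $\tilde B$ glue to the unique $B$-operator on $\mathrm{Frac}(E_a\otimes_K\tilde B)$, and the canonical $\ka$-algebra isomorphism $\mathrm{Frac}(E_a\otimes_K\tilde B)\cong F_a$ is therefore a $B$-isomorphism; likewise for $a'$, giving $F_{a'}$.

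Now $\sigma\otimes\id_{\tilde B}\colon E_a\otimes_K\tilde B\to E_{a'}\otimes_K\tilde B$ is a $\ka$-algebra isomorphism fixing $\tilde B$, and it respects the glued $B$-operators by the uniqueness in Proposition~\ref{tensor}; passing to fraction fields and using the identifications above, it induces a $B$-isomorphism $\bar\sigma\colon F_a\to F_{a'}$ fixing $\tilde B$ pointwise with $\bar\sigma(a)=a'$. Since $F_a,F_{a'}$ are $L^B_{\lambda_0}$-substructures of $\mathfrak{C}$, the symbol $\lambda_0$ is interpreted on them as the restriction of $\lambda_0^{\mathfrak{C}}$, and a field isomorphism automatically commutes with extraction of $p$-th roots; hence $\bar\sigma$ is an $L^B_{\lambda_0}$-isomorphism. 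By the reduction in the first paragraph together with Theorem~\ref{qe}, $\tp^{\bdcf}(a/B)=\tp^{\bdcf}(a'/B)$, so the nonforking extension is unique and (P7) holds.

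I expect the main obstacle to be, as in \cite{K2}, the translation of the abstract independence $E_a\ind^{\scf}_K\tilde B$ into the field-theoretic facts used here: linear disjointness over $K$ (to make $E_a\otimes_K\tilde B$ a domain) and, more delicately, the separability of $F_a$ over $\tilde B$ via Lemma~\ref{fact23}. This last point is what forces $\bar\sigma$ to respect $\lambda_0$, giving an $L^B_{\lambda_0}$-isomorphism rather than merely a $B$-field isomorphism; it is also where one relies on the structure of separably closed fields of finite imperfection degree (in particular, that $\acl^{\scf}$-closed subfields of $\mathfrak{C}$ yield separable extensions).
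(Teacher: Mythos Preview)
Your approach is essentially the paper's: construct a $B$-isomorphism of the compositum fields via the tensor product (Proposition~\ref{tensor} and Lemma~\ref{vergen}(1)), verify via Lemma~\ref{fact23} that these composita are $L^B_{\lambda_0}$-substructures of $\mathfrak{C}$, and conclude by Theorem~\ref{qe}. The paper works with $K_a=\dcl^{\bdcf}(Ka)$ rather than $\acl^{\bdcf}(Ka)$ and only considers extensions to an elementary extension $M\succcurlyeq K$ rather than to an arbitrary $\acl^{\bdcf}$-closed set, but these differences are cosmetic.

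Two points to clean up. First, $\mathfrak{C}$ has \emph{infinite} imperfection degree (this is the content of the remark opening the stability subsection), so your justification that $E_a\subseteq\mathfrak{C}$ is separable ``being $\acl^{\scf}$-closed in a separably closed field of finite imperfection degree'' does not go through as written. The paper's route is instead: a $\dcl^{\bdcf}$-closed $B$-subfield is $\lambda_0$-closed, hence strict, so Theorem~\ref{strict} gives that its extension to $\mathfrak{C}$ is separable; the same applies to your $E_a$ and $\tilde B$. Second, Lemma~\ref{fact23} yields that $F_a\subseteq\mathfrak{C}$ is separable, not that $F_a$ is separable over $\tilde B$; it is the former that makes $F_a$ an $L^B_{\lambda_0}$-substructure of $\mathfrak{C}$ and lets Theorem~\ref{qe} apply.
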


\begin{proof}
Let us take a $B$-elementary subfield $K$ of $\mathfrak{C}$, an elementary extension of $B$-fields $K\preccurlyeq M$ (inside $\mathfrak{C}$) and $a,b\in \mathfrak{C}$. We assume that:
$$\tp^B(a/K)=\tp^B(b/K),\ \ \ \ \ \  a\ind^{\bdcf}_K M ,\ \ \ \ \ \  b\ind^{\bdcf}_K M.$$
Our aim is to show that $\tp^B(a/M)=\tp^B(b/M)$.

Suppose that $f\in\aut^{\bdcf}(\mathfrak{C}/K)$ is such that $f(a)=b$. We consider
$$K_a:=\dcl^{\bdcf}(Ka),\ \ \ \ \ K_b:=\dcl^{\bdcf}(Kb)$$
and notice that $f$ is a $B$-isomorphism between $(K_a,\partial)$ and $(K_b,\partial)$.

Note that $M=\dcl^{\bdcf}(M)$. The extension $K\subseteq M$ is regular, so as in the proof of Lemma \ref{acl} we get that
$K_a\otimes_K M$ and $K_b\otimes_K M$ are domains and the map
$$f|_{K_a}\otimes\id_M:K_a\otimes_K M\to K_b\otimes_K M$$
is a $B$-isomorphism, which extends (by Lemma \ref{vergen}(1)) to the isomorphism $\widetilde{f}$ of the fields of fractions. Again, as in the proof of Lemma \ref{acl}, the regularity of $K\subseteq M$ and the algebraic disjointness of $K_a$ with $M$ over $K$ lead to linear disjointness of $K_a$ and $M$ over $K$, hence there exists a $B$-isomorphism $f_a$ from $K_aM$ to the field of fractions of $K_a\otimes_K M$ taking $a$ to $a\otimes 1$.
Similarly, there exists a $B$-isomorphism $f_b$ from $K_bM$ to the field of fractions of $K_b\otimes_K M$ taking $b$ to $b\otimes 1$.
Composing the $B$-isomorphisms $f_a$, $\widetilde{f}$ and $f_b$ gives us a $B$-isomorphism $h:K_aM\to K_bM$ such that $h(a)=b$.

Since $K, K_a, K_b$ and $M$ are definably closed subsets of $\mathfrak{C}$, by Theorem \ref{strict} the extensions $K\subseteq K_a\subseteq \mathfrak{C}$, $K\subseteq K_b\subseteq \mathfrak{C}$ and $K\subseteq M\subseteq \mathfrak{C}$ are separable. By Lemma \ref{fact23}, the extensions $K_aM\subseteq\mathfrak{C}$ and $K_bM\subseteq\mathfrak{C}$ are separable (linear disjointness implies $p$-independence, so Lemma \ref{fact23} can be applied). In particular, $K_aM$ and $K_bM$ are $L_{\lambda_0}^B$-substructures of $\mathfrak{C}$. By Theorem \ref{qe}, there exists $\hat{h}\in\aut^{\bdcf}(\mathfrak{C}/K)$ such that $\hat{h}|_{K_aM}=h$, hence $\tp^B(a/M)=\tp^B(b/M)$.
\end{proof}

\begin{theorem}\label{bdcf.stable.thm}
The theory $\bdcf$ is stable, not superstable and the relation $\ind^{\bdcf}$ coincides with the forking independence.
\end{theorem}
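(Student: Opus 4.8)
The plan is to read off both assertions from the properties of $\ind^{\bdcf}$ that have just been verified, using only one external fact about separably closed fields. For \textbf{stability} and the coincidence of $\ind^{\bdcf}$ with forking independence, I would invoke the abstract characterisation of forking in stable theories: a complete theory carrying an $\aut(\mathfrak{C})$-invariant ternary relation on small sets which satisfies invariance, symmetry, monotonicity, base monotonicity, transitivity, finite character, the extension (existence) axiom, local character and \emph{stationarity over models} is stable, and that relation is precisely non-forking independence; this is the route taken in the proof of \cite[Theorem 2.4]{K2}. The list (P1)--(P7) established above for $\ind^{\bdcf}$ --- with (P3) supplying both base monotonicity and transitivity, (P4) (Lemma \ref{lemma.P4}) the extension axiom, and (P7) (Lemma \ref{lemma.P7}) stationarity over models --- is exactly the input this criterion requires, so stability of $\bdcf$ and the identification of $\ind^{\bdcf}$ with forking follow at once.

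For \textbf{non-superstability} I would pass to a reduct. By the Remark after Theorem \ref{qe}, the field $\mathfrak{C}$ is separably closed of characteristic $p$ and of \emph{infinite} imperfection degree, so the $L_{\lambda_0}$-reduct (already the pure ring reduct) of $(\mathfrak{C},\partial)$ is a model of $\scf_{p,\infty}$, the complete theory of separably closed fields of characteristic $p$ with infinite imperfection degree. It is classical that $\scf_{p,\infty}$ is stable but not superstable (cf. \cite{Wo1}). Since a reduct of a superstable theory is again superstable --- if $T$ is $\lambda$-stable for every $\lambda\geqslant 2^{|T|}$, then so is every reduct of $T$ --- the failure of superstability for $\scf_{p,\infty}$ forces $\bdcf$ to be non-superstable as well.

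The substantive work is already behind us: it lies in Lemmas \ref{lemma.P4} and \ref{lemma.P7}, which rest on the amalgamation argument of Section \ref{secqe}, the linear disjointness results of Section \ref{secwro}, and the computation of $\acl^{\bdcf}$ in Lemma \ref{acl}. What remains for the present theorem is bookkeeping, and the only point I would check with care is that (P1)--(P7) really do match the hypotheses of the abstract criterion: that the combined clause (P3) does encode base monotonicity, and that the automorphism produced in the proof of Lemma \ref{lemma.P4} can be taken to fix $A$ pointwise (there it is chosen to fix $\acl^{\bdcf}(A)$), so that (P4) is the genuine extension axiom and not merely the weaker statement that some conjugate of $a$ is independent from $B$ over $A$. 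Alternatively, one could prove non-superstability directly, by exhibiting over a countable parameter set a type forking over every finite subset, built from the infinite $p$-independent families constructed in the proof of that same Remark; but I would prefer the reduct argument as it is shorter.
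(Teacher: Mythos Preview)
Your proof is correct and follows essentially the same route as the paper's: non-superstability via the non-perfect separably closed reduct, and stability together with the identification of forking via the abstract characterisation applied to (P1)--(P7) (the paper cites \cite[Fact 2.1.4]{Kimsim} rather than \cite{K2}). Your remark that the automorphism produced in the proof of Lemma~\ref{lemma.P4} actually fixes $\acl^{\bdcf}(A)$ pointwise, so that (P4) is the genuine extension axiom and not merely its weak form as literally stated, is a useful clarification that the paper leaves implicit.
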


\begin{proof}
The underlying fields of models of $\bdcf$ are separably closed and not perfect, hence $\bdcf$ is not superstable. Stability and the description of forking independence follows from \cite[Fact 2.1.4]{Kimsim}, since the relation $\ind^{\bdcf}$ satisfies the properties (P1)--(P7).
\end{proof}

\section{Generalizations and further directions}\label{secend}
In this section, we discuss some other topics related with model theory of fields with free operators.

\subsection{No jet spaces methods}
It does not look possible at this moment to have here a positive characteristic version of the jet spaces techniques and results from \cite{MS2}. The reason is rather simple: having them would imply (in particular) Zilber's trichotomy for any theory of the shape $\bdcf$ (for $B$ satisfying Assumption \ref{ass2}), and this trichotomy is unknown even in the ``simplest'' case of DCF$_p$, that is for the case of $B=\ka[X]/(X^2)$.

\subsection{Elimination of Imaginaries}
Similarly as in the subsection above, if $B$ satisfies Assumption \ref{ass2} then we can not hope for the elimination of imaginaries for the theories of the shape $\bdcf$ in any language, which we have considered in this paper. The reason is again the same: as noted in \cite[Remark 4.3]{MeWo}, the ``simplest'' theory DCF$_p$ has no elimination of imaginaries (in any of these languages).

However, in the case of a non-local $B$, we do get the elimination of imaginaries for the theory $\bdcf$, as was explained in Section \ref{secacfapd}.

\subsection{Derivations of Frobenius}
This paper does \emph{not} generalize the results from \cite{K2}, since (rather surprisingly) derivations of Frobenius do not fit to the set-up of \cite{MS2}, which we explain briefly below. Derivations of Frobenius are also controlled by a representable functor and natural trasformations
$$\mathcal{B}:\mathrm{Alg}_{\ka}\to \mathrm{Alg}_{\ka},\ \ \ \iota_{\mathcal{B}}:\id\to \mathcal{B},\ \ \ \pi_{\mathcal{B}}:\mathcal{B}\to \id$$
($\mathcal{B}(R)$ is denoted by $R_{(1)}$ in \cite{K2}), but this functor is not of the form $\cdot\otimes_{\ka}B$ for any $\ka$-algebra $B$, since there are field extensions $\ka\subseteq K,\ka\subseteq L$ such $\dim_K(\mathcal{B}(K))$ is finite and $\dim_L(\mathcal{B}(L))$ is infinite. It looks like one can still develop a theory of $\mathcal{B}$-operators for a more general class of functors $\mathcal{B}$ than the ones considered in \cite{MS2} (which are the functors of the form $\cdot\otimes_{\ka}B$) in such a way that this new theory will cover derivations of Frobenius as well; and then (using the set-up provided by this new theory) one could possibly prove results generalizing both the results of this paper and the results of \cite{K2}, but this will be done elsewhere.

\subsection{Formal group actions}
A common context between the results of this paper and the results from \cite{BK} may be found in the set-up of \emph{formal group actions} on fields. This common context was actually the research topic of another working group at the July 2016 \c{S}irince workshop, which consisted of the third author, the fourth author, Rahim Moosa and Thomas Scanlon. In \cite{BK}, the conjectural condition on a group $G$ being equivalent to companionability of the theory of fields with $G$-actions is \emph{virtual freeness}. In this paper, the condition giving companionability is Assumption \ref{ass2} from Section \ref{secoppro}.

We would like to emphasize that the geometric axioms from this paper and the geometric axioms from \cite{BK} (and from many other places) have the same form: we have a subvariety $W$ of the prolongation $\tau^{\partial}(V)$ of a variety $V$ and we are looking for a rational point of $V$ whose natural image by $\partial$ (the operator we consider) in $\tau^{\partial}(V)$  belongs actually to $W$.

\subsection{More geometric approach}
The formalism of $B$-operators can be viewed more geometrically by
considering $B$ as the algebra of function of the corresponding affine
scheme $M=\spec(B)$. The map $\pi_B:B\to \ka$ then corresponds to a
base-point $*\in M$. A $B$-operator on $R$ is an ``action'' of $M$ on
$X=\spec(R)$, i.e. it is a map
$$a:M\times_{\ka} X\to X,$$
such that the
restriction of $a$ to $*\times_{\ka} X=X$ is the identity. For example, in
the case of an endomorphism, $M=\{*,\sigma\}$ is a two-point set. The
prolongation functor then assigns, to a scheme $X$, the scheme
$\tau^\partial(X)=X^M$ of functions from $M$ to $X$, and the map from it
to $X$ is given by evaluation at $*$. The reader is advised to consult \cite[Section 3]{Kam} for
more details on this point of view.
\bibliographystyle{plain}
\bibliography{harvard}

\end{document}